\theoremstyle{definition}
\newtheorem{thm}{Theorem}[section]
\newtheorem{prop}[thm]{Proposition}
\newtheorem{cor}[thm]{Corollary}
\newtheorem{lem}[thm]{Lemma}
\newtheorem{ex}[thm]{Example}
\newtheorem{rem}[thm]{Remark}
\newtheorem{exe}[thm]{Example}
\numberwithin{equation}{section}
\let\bwdg\bigwedge
\def\bigwedge{{\textstyle\bwdg}}
\def\ps@pprintTitle{%
     \let\@oddhead\@empty
     \let\@evenhead\@empty
     \def\@oddfoot{\footnotesize\itshape%
       \hfill}
     \let\@evenfoot\@oddfoot}
\journal{\relax}
\newcommand{\wt}{\operatorname{wt}}
\newcommand{\nc}{\newcommand}
\renewcommand{\Bbb}{\mathbb}
\nc\bomega{{\mbox{\boldmath $\omega$}}} \nc\bpsi{{\mbox{\boldmath
$\Psi$}}}
 \nc\balpha{{\mbox{\boldmath $\alpha$}}}
 \nc\bpi{{\mbox{\boldmath $\pi$}}}
\newcommand{\lie}[1]{\mathfrak{#1}}
 \nc{\Hom}{\operatorname{Hom}}
\nc{\End}{\operatorname{End}} \nc{\wh}[1]{\widehat{#1}}
\nc{\Ext}{\operatorname{Ext}} \nc{\ch}{\operatorname{ch}} \nc{\gch}{\operatorname{gch}}
\nc{\ev}{\operatorname{ev}} \nc{\Ob}{\operatorname{Ob}}
\nc{\soc}{\operatorname{soc}} \nc{\rad}{\operatorname{rad}}
\nc{\head}{\operatorname{head}}
 \nc{\Cal}{\mathcal} \nc{\Xp}[1]{X^+(#1)} \nc{\Xm}[1]{X^-(#1)}
\nc{\on}{\operatorname} \nc{\Z}{{\mathbf Z}} \nc{\J}{{\mathcal J}}
\nc{\C}{{\mathbf C}} \nc{\Q}{{\mathbf Q}}
\nc{\N}{{\Bbb N}} \nc\boa{\mathbf a} \nc\bob{\mathbf b} \nc\boc{\mathbf c}
\nc\bod{\mathbf d} \nc\boe{\mathbf e} \nc\bof{\mathbf f} \nc\bog{\mathbf g}
\nc\boh{\mathbf h} \nc\boi{\mathbf i} \nc\boj{\mathbf j} \nc\bok{\mathbf k}
\nc\bol{\mathbf l} \nc\bom{\mathbf m} \nc\bon{\mathbf n} \nc\boo{\mathbf o}
\nc\bop{\mathbf p} \nc\boq{\mathbf q} \nc\bor{\mathbf r} \nc\bos{\mathbf s}
\nc\bou{\mathbf u} \nc\bov{\mathbf v} \nc\bow{\mathbf w} \nc\boz{\mathbf z}
\nc\boy{\mathbf y} \nc\ba{\mathbf A} \nc\bb{\mathbf B} \nc\bc{\mathbf C}
\nc\bd{\mathbf D} \nc\be{\mathbf E} \nc\bg{\mathbf G} \nc\bh{\mathbf H}
\nc\bi{\mathbf I} \nc\bj{\mathbf J} \nc\bk{\mathbf K} \nc\bl{\mathbf L}
\nc\bm{\mathbf M} \nc\bn{\mathbf N} \nc\bo{\mathbf O} \nc\bp{\mathbf P}
\nc\bq{\mathbf Q} \nc\br{\mathbf R} \nc\bs{\mathbf S} \nc\bt{\mathbf T}
\nc\bu{\mathbf U} \nc\bv{\mathbf V} \nc\bw{\mathbf W} \nc\bz{\mathbf Z}
\nc\bx{\mathbf x}
\def\opl_#1^#2{\text{\scriptsize$\bigoplus\limits_{\text{\footnotesize$#1$}}^{\text{\footnotesize$#2$}}$}}
\def\otm_#1^#2{\text{\scriptsize$\bigotimes\limits_{\text{\footnotesize$#1$}}^{\text{\footnotesize$#2$}}$}}
\def\gb#1{\text{$\boldsymbol #1$}}
\def\tsum_#1^#2{\text{\small$\sum\limits_{\text{\footnotesize$#1$}}^{\text{\footnotesize$#2$}}$}}
\def\cel#1{\mathcal #1^\ell}
\begin{document}
\begin{frontmatter}
\title{On Multigraded Generalizations of Kirillov-Reshetikhin Modules  \tnoteref{fn1}}
\author[ime]{Angelo Bianchi}
\address[ime]{Department of Mathematics, University of Campinas, SP, Brazil, 13083-970}
\emailauthor{angelo@ime.unicamp.br}{A.~B.}
\author[ucr]{Vyjayanthi Chari}
\address[ucr]{Department of Mathematics, University of California, Riverside, CA 92521}
\emailauthor{vyjayanthi.chari@ucr.edu}{V.~C.}
\author[kln]{Ghislain Fourier}
\address[kln]{Mathematisches Institut, Universit\"at zu K\"oln, Germany}
\emailauthor{gfourier@mi.uni-koeln.de}{G.~F.}
\author[ime]{Adriano Moura}\emailauthor{aamoura@ime.unicamp.br}{A.~M.}
\tnotetext[fn1]{Partially supported by the FAPESP grant 2011/22322-4 (A.~B.), the NSF grant DMS-0901253 (V.~C.), the DFG priority program 1388 - ``Representation Theory'' (G. F.), and the CNPq grant 306678/2008-0 (A.~M.)}

\begin{abstract}
We study the category of $\mathbb Z^\ell$-graded modules with finite-dimensional graded pieces for certain $\mathbb Z^\ell_+$-graded Lie algebras. We also consider certain Serre subcategories with finitely many isomorphism classes of simple objects. We construct projective resolutions for the simple modules in these categories and compute the Ext groups between simple modules. We show that the projective covers of the simple modules in these Serre subcategories can be regarded as multigraded generalizations of Kirillov-Reshetikhin modules and give a recursive formula for computing their graded characters.
\end{abstract}

\end{frontmatter}

\section*{Introduction}

The study of the structure of the simple finite-dimensional representations of quantum affine algebras has attracted a lot of attention over the past
two decades. Of particular interest is the subclass of minimal affinizations introduced in \cite{cha:minr2} which includes the class of Kirillov-Reshetikhin modules. The Kirillov-Reshetikhin modules have been studied extensively due to their application to mathematical physics and their rich combinatorial structure (cf. \cite{frke:combkr,fos:krcne,jap:path,jap:rem,her:KR,nasa:perfectkr,nanak:trc,nanak:trd,sch:combkr,scti:demkrc} and references therein).
One way of studying their structure is by looking at the classical limits which can then be regarded as graded modules for the current algebra $\lie g[t]=\lie g\otimes \mathbb C[t]$, where $\lie g$ is the underlying finite-dimensional semisimple Lie algebra. The limit process does not change the character of the representation and, hence, it can be studied by investigating the limit module. This fact is one of the key motivations for the increasing interest in the study of categories of graded modules for current algebras \cite{cg:minp,ckr:faces,foli:wdkr,kona:loewy,naoi:fuskr}.

If $\lie g$ is of classical type, then the classical limits of minimal affinizations factor to representations for the finite-dimensional algebra $\lie g[t]/(t^2\mathbb C[t])$ which is isomorphic to the semidirect product of $\lie g$ with its adjoint representation.
This motivates the study of graded modules for algebras of the form $\lie a=\lie g\ltimes V$ where $V$ is a finite-dimensional $\lie g$-module. The $\mathbb Z$-grading on $\lie a$ is set to be $\lie a[0]=\lie g$ and $\lie a[1]=V$. Consider the category $\mathscr G$ of graded $\lie a$-modules with finite-dimensional graded pieces. It was shown in \cite{cg:minp} that the isomorphism classes of simple modules of this category are in bijection with the set $\Lambda = P^+\times \mathbb Z$ where $P^+$ is the set of dominant weights of $\lie g$. Moreover, it was also shown that every simple module has a projective cover and a projective resolution in this category. This was then used to compute the Ext groups between simple modules. The connection with minimal affinizations and Kirillov-Reshetikhin modules is made by considering Serre subcategories of this category generated by certain finite sets of simple modules. More precisely, one chooses a finite subset $\Gamma$ of $\Lambda$ which is convex with respect to a certain natural partial order on $\Lambda$. Let $\mathscr G[\Gamma]$ be the corresponding Serre subcategory. Given $(\lambda,r)\in\Gamma$, let $P(\lambda,r)$ be the projective cover of the associated simple module. It was shown in \cite{cg:minp} that $P(\lambda,r)$ projects onto a module $P(\lambda,r)^\Gamma$ which is projective in $\mathscr G[\Gamma]$ (in fact, the whole projective resolution of the underlying simple module induces a projective resolution in  $\mathscr G[\Gamma]$). It turns out that one can describe the modules $P(\lambda,r)^\Gamma$ in terms of generators and relations which, in several cases, coincides with the relations satisfied by the highest-weight vectors of classical limits of minimal affinizations (cf. \cite{cm:kr,cm:krg,mou:reslim,mp:e6}). Moreover, the results on the Ext groups give rise to a recursive character formula for the modules $P(\lambda,r)^\Gamma$. Therefore, a character formula for minimal affinizations is also obtained in the cases that their classical limits are isomorphic to modules of the form $P(\lambda,r)^\Gamma$.

In the last years, the interest on the representation theory of ``generalized current algebras'', in particular the multivariable current algebras, has increased significantly \cite{bapa,cfk,kod:ext,lau:ml,ns,nss}. The study of characters in the multigraded context is well-known to be significantly more complicated than in the single variable case (see \cite{felo:mweyl} for the case of Weyl modules) and technical issues that, in the single variable setting arise when working with exceptional algebras, in the multivariable case arise for algebras of classical type. In the present paper we are concerned with the study of Kirillov-Reshetikhin modules in the multivariable setting. The main achievement is the extension to this setting of all the results described in the previous paragraph for algebras of the form $\lie a:=(\lie g\otimes\mathbb C[t_1,\dots,t_\ell])/\lie a_2$ where $\lie a_2$ is the ideal generated by all elements of degree $2$.

Following \cite{ckr:faces}, we start the study by considering a more general context. Namely, we consider the category $\mathscr G$ of $\mathbb Z^\ell$-graded modules with finite-dimensional graded pieces for a $\mathbb Z_+^\ell$-graded Lie algebra $\lie a$ such that $\lie a[\gb 0]=\lie g$ and $\lie a[\gb r]$  is finite-dimensional for all $\gb r\in\mathbb Z_+^\ell$. We show that the isomorphism classes of simple modules of this category are indexed by $\Lambda:=P^+\times\mathbb Z^\ell$ and construct a projective resolution $P_j(\lambda,\gb r), j\ge 0$, for the simple module $V(\lambda,\gb r)$ associated to $(\lambda,\gb r)\in \Lambda$. We end the first section by considering the Serre subcategories $\mathscr G[\Gamma]$ for subsets $\Gamma$ of $\Lambda$. We prove that, if $\Gamma$ is finite and convex with respect to a certain partial order on $\Lambda$ and $(\lambda,\gb r)\in\Gamma$, then the modules $P_j(\lambda,\gb r)$ projects onto finite-dimensional modules $P_j(\lambda,\gb r)^\Gamma$ giving rise to a (finite) projective resolution of $V(\lambda,\gb r)$ in $\mathscr G[\Gamma]$.

We start Section 2  assuming that $\lie a[\gb r]=0$ for all $\gb r=(r_1,\dots,r_\ell)$ such that $r_1+\cdots+r_\ell>1$ and compute all the Ext groups between simple modules in this case. Moreover, we also show that if $\Gamma$ is finite and convex and $(\lambda,\gb r), (\mu,\gb s)\in\Gamma$, then
$$\Ext^j_{\mathcal G[\Gamma]}(V(\lambda,\gb r), V(\mu,\gb s))\cong \Ext^j_{\mathcal G}(V(\lambda,\gb r), V(\mu,\gb s)) \qquad\text{for all}\qquad j\ge 0.$$
Next, we obtain a multigraded version of the recursive character formula for the modules $P(\lambda,\gb r)^\Gamma:=P_0(\lambda,\gb r)^\Gamma$ assuming that $\Gamma$ is convex with respect to a refinement of the partial order on $\Lambda$ similar to the refinement considered in \cite{ckr:faces} for the case $\ell=1$. We then obtain generators and relations for the modules $P(\lambda,\gb r)^\Gamma$ (this generalizes the original result of \cite[Theorem 1]{cg:minp} even in the case $\ell=1$). In the last subsection we work with the algebras $\lie a:=\lie g\otimes\mathbb C[t_1,\dots,t_\ell]$ and $\lie b:=\lie a/\lie a_2$ with $\lie g$ of classical type. For each $(\lambda,\gb r)\in\Lambda$, we consider the $\lie a$-module $N(\lambda,r)$ defined by a generator satisfying the naive multigraded generalization of the relations satisfied by the highest-weight vector of the classical limits of minimal affinizations computed in \cite{mou:reslim}. In particular, if $\lambda$ is a multiple of a fundamental weight, the relations simplify to the naive multigraded generalization of those considered in \cite{cm:kr,cm:krg} for Kirillov-Reshetikhin modules. We show that $N(\lambda,\gb r)$ factors to a $\lie b$-module and, as such, it is isomorphic to a module of the form $P(\lambda,\gb r)^\Gamma$ for an appropriate choice of $\Gamma$. This allows us to use our recursive character formula for computing the graded character of $N(\lambda,\gb r)$. We end the paper by giving an example of such computation. In particular, the example shows that the set of simple factors on a Jordan-H\"older series of these modules depend on $\ell$, differently to what happens with the corresponding local Weyl modules studied in \cite{cfk} (see Remark \ref{r:depell}).

\vskip15pt
\noindent{\bf Acknowledgements:}  A. Bianchi thanks the University of California at Riverside for hospitality and A. Khare for helpful discussions.

\section{Multigraded Modules and Projective Resolutions}

In this section we define the categories which are our objects of study, classify their simple objects, and construct a projective resolution for each of the simple objects.

\subsection{The algebras}

Let $\mathbb C$ be the field of complex numbers and $\mathbb Z,\mathbb Z_+$ the subsets of integers and nonnegative integers, respectively. Given $\ell>0$,  let $\gb e_1,\dots,\gb e_\ell$ be the canonical basis of $\mathbb Z^\ell$ and $\deg:\mathbb Z^\ell\to\mathbb Z$ be the group homomorphism defined by $\deg(\gb e_j )=1$ for all $j=1,\dots,\ell$. Let also $\cel V$ be the category of $\mathbb Z^\ell$-graded $\mathbb C$-vector spaces. Thus, the morphisms in $\cel V$ are linear maps $f:V\to W$ such that $f(V[\gb r])\subseteq W[\gb r]$ for every $\gb r\in\mathbb Z^\ell$. We will also use the functor $\mathscr D:\cel V\to\mathcal V^1$ given by
$$\mathscr D (V) = \opl_{r\in \mathbb Z}^{} V[r] \qquad \text{where} \qquad  V[r]=\opl_{\substack{\gb r\in\mathbb Z^\ell \\\deg(\gb r)=r }}^{}V[\gb r].$$

Fix a $\mathbb Z_+^\ell$-graded Lie algebra $\lie a=\opl_{\gb r\in\mathbb Z_+^\ell}^{} \lie a[\gb r]$ such that $\lie g:=\lie a[\gb 0]$ is a finite-dimensional semisimple Lie algebra over $\mathbb C$ and  $\lie a[\gb r]$ is a finite-dimensional $\lie g$-module for all $\gb r\in\mathbb Z_+^\ell$. Set $$\lie a_+=\opl_{\substack{\gb r\in\mathbb Z_+^\ell: \\ \deg(\gb r)\ne 0}}^{}\lie a[\gb r]$$
which is an ideal of $\lie a$. In particular, $\mathscr D(\lie a)$ is a $\mathbb Z_+$-graded Lie algebra with $\lie a[0]=\lie g$ and $\mathscr D(\lie a_+)$ is an ideal of $\mathscr D(\lie a)$. Moreover, $\lie a[r]$ is a finite-dimensional $\lie g$-module for all $r\in\mathbb Z_+$.

Fix a Cartan subalgebra $\lie h$ of $\lie g$ and denote by $R$ the associated root system. Fix also a choice of positive roots $R^+\subseteq R$ and let $\lie g=\lie n^-\oplus\lie h\oplus \lie n^+$ be the corresponding triangular decomposition. The associated simple roots and fundamental weights will be denoted by $\alpha_i$ and $\omega_i, i\in I$, where $I$ an indexing set for the nodes of the Dynkin diagram of $\lie g$. Denote by $P$ and $Q$ the weight and root lattices and let $P^+$ and $Q^+$ be the $\mathbb Z_+$-span of the fundamental weights and simple roots, respectively. For notational convenience, fix a Chevalley basis $\{x_\alpha^\pm, h_i:\alpha\in R^+, i\in I\}$ for $\lie g$.

\subsection{On finite-dimensional modules}

For any Lie algebra $\lie l$, let $\mathcal F(\lie l)$ be the category of finite-dimensional $\lie l$-modules. Given $\lambda\in P^+$, let $V(\lambda)$ be a simple object of $\mathcal F(\lie g)$ of highest-weight $\lambda$. Then, $V(\lambda)$ is isomorphic to the $\lie g$-module generated by a vector $v_\lambda$ satisfying the defining relations
\begin{equation*}
\lie n^+v_\lambda=0, \qquad hv_\lambda=\lambda(h)v_\lambda, \qquad (x_{\alpha_i}^-)^{\lambda(h_i)+1}v_\lambda=0 \qquad\text{for all } \qquad h\in\lie h, i\in I.
\end{equation*}
Denote by $V_\mu$ the weight space of $V$ of weight $\mu\in P$ and, for $V\in\mathcal F(\lie g)$, let
\begin{equation*}
\ch V = \sum_{\mu\in P} \dim V_\mu\ e^\mu
\end{equation*}
denote the formal character of $V$.

We will need the following elementary lemma.

\begin{lem}\label{l:hwvecs}
Let $V$ be a finite-dimensional $\lie g$-module and suppose $l\in\mathbb Z_{\ge 1}, \nu_k\in P, v_k\in V_{\nu_k}$, for $k=1,\dots,l$, are such that $V=\tsum_{k=1}^l U(\lie n^-)v_k$. Fix a decomposition $V= \opl_{j=1}^m V_j$ where $m\in\mathbb Z_{\ge 1},  V_j\cong V(\mu_j)$ for some $\mu_j\in P^+$, and let $\pi_j:V\to V_j$ be the associated projection for $j=1,\dots,m$. Then, there exist distinct $k_1,\dots,k_m\in\{1,\dots,l\}$ such that $\nu_{k_j}=\mu_j$ and $\pi_j(v_{k_j})\ne 0$.\hfill\qedsymbol
\end{lem}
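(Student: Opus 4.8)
The plan is to argue by induction on $m$, the number of summands in the chosen decomposition $V=\bigoplus_{j=1}^m V_j$. Throughout, I would use the standard fact that for each $j$ the projection $\pi_j$ restricted to the $\mu_j$-weight space of $V$ is nonzero (since $V_j\cong V(\mu_j)$ has a one-dimensional highest weight space of weight $\mu_j$), and more importantly that a highest weight vector of $V_j$ cannot lie in $\sum_{i\neq j}V_i$ unless $\mu_j$ appears as a weight of some $V_i$ with $i\neq j$.

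\emph{Base case $m=1$.} Here $V\cong V(\mu_1)$ and $V=\sum_{k=1}^l U(\lie n^-)v_k$. If $\pi_1(v_k)=\nobreak 0$ for every $k$ with $\nu_k=\mu_1$, then each generator $v_k$ lies either in a weight space $V_{\nu_k}$ with $\nu_k\notin\mu_1+(-Q^+)$, or has $\nu_k=\mu_1$ but $v_k=\pi_1(v_k)=0$; in the first situation $U(\lie n^-)v_k$ is contained in the span of weight spaces $V_\mu$ with $\mu\in\nu_k-Q^+$, none of which is the highest weight line. Since $U(\lie n^-)$ lowers weights, no combination of the $U(\lie n^-)v_k$ can reach the highest weight vector of $V$, contradicting $V=\sum_k U(\lie n^-)v_k$. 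Hence some $v_k$ with $\nu_k=\mu_1$ has $\pi_1(v_k)\neq 0$, giving the single required index $k_1$.

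\emph{Inductive step.} Choose $j_0$ such that $\mu_{j_0}$ is maximal among $\mu_1,\dots,\mu_m$ with respect to the usual partial order on $P^+$ (or, more robustly, pick $j_0$ with $\deg$ of the weight maximal and break ties arbitrarily—any choice making $\mu_{j_0}$ not strictly below another $\mu_j$ works). Applying $\pi_{j_0}$ to $V=\sum_k U(\lie n^-)v_k$ and using $U(\lie n^-)$-equivariance of $\pi_{j_0}$ gives $V_{j_0}=\sum_k U(\lie n^-)\pi_{j_0}(v_k)$. Now $\pi_{j_0}(v_k)$ lies in the $\nu_k$-weight space of $V_{j_0}\cong V(\mu_{j_0})$, so it is zero unless $\nu_k\le\mu_{j_0}$; as in the base case, to generate the highest weight line of $V_{j_0}$ we need some index, say $k_{j_0}$, with $\nu_{k_{j_0}}=\mu_{j_0}$ and $\pi_{j_0}(v_{k_{j_0}})\neq 0$. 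Set $V'=\bigoplus_{j\neq j_0}V_j$ and let $v'_k=(\id-\pi_{j_0})(v_k)\in V'_{\nu_k}$ for $k\neq k_{j_0}$. I claim $V'=\sum_{k\neq k_{j_0}}U(\lie n^-)v'_k$: applying $\id-\pi_{j_0}$ to the spanning relation gives $V'=\sum_{k=1}^l U(\lie n^-)v'_k$, and the single term $U(\lie n^-)v'_{k_{j_0}}$ can be dropped because $v'_{k_{j_0}}$ has weight $\mu_{j_0}$, which by maximality of $\mu_{j_0}$ is not a weight of any $V_j$ with $j\neq j_0$ \emph{unless} it coincides with some $\mu_j$; handling that coincidence is the one point requiring a little care (see below). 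By the induction hypothesis applied to $V'$ with generators $\{v'_k: k\neq k_{j_0}\}$, we obtain distinct indices $k_j\in\{1,\dots,l\}\setminus\{k_{j_0}\}$ for $j\neq j_0$ with $\nu_{k_j}=\mu_j$ and $\pi_j(v'_{k_j})=\pi_j(v_{k_j})\neq 0$ (the last equality since $\pi_j\pi_{j_0}=0$). Together with $k_{j_0}$ these give the desired $m$ distinct indices.

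\emph{Main obstacle.} The genuine subtlety is the step ``drop $U(\lie n^-)v'_{k_{j_0}}$'': if $\mu_{j_0}=\mu_{j_1}$ for some $j_1\neq j_0$, then $v'_{k_{j_0}}$ need not lie outside the part of $V'$ we are trying to generate, so it is not obviously redundant. The fix is to choose $j_0$ more carefully—among all indices whose associated weight is maximal, and then, if there are several summands with that same maximal weight, to first peel them off one at a time: reorder so that $\mu_1=\cdots=\mu_p$ is the common maximal weight and $\mu_j<\mu_1$ strictly for $j>p$ (strict in the sense of not being $\ge\mu_1$), then show by a small separate argument—projecting onto $V_1\oplus\cdots\oplus V_p$ and using that this is a $\lie g$-stable complement on which $U(\lie n^-)$ acts, together with linear independence of the highest weight components—that one can select $p$ distinct generators $v_{k_1},\dots,v_{k_p}$ of weight $\mu_1$ whose images in $V_1,\dots,V_p$ under $\pi_1,\dots,\pi_p$ are respectively nonzero. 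After removing these $p$ generators and replacing each remaining $v_k$ by its component in $V_{p+1}\oplus\cdots\oplus V_m$, the remaining generators span that submodule (now the removed terms genuinely have weight $\mu_1$, which is \emph{not} a weight of any $V_j$ with $j>p$, so their contribution is zero there) and induction finishes. This reduces everything to the case of distinct maximal weight, where the argument above is clean.
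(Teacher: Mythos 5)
The paper itself offers no argument for this lemma (it is stated as elementary, with the proof omitted), so your proposal can only be judged on its own terms. Its overall architecture is sound: inducting on $m$, peeling off \emph{all} summands whose highest weight is a fixed maximal element of $\{\mu_1,\dots,\mu_m\}$, and using that this maximal weight is not a weight of any remaining summand (so the peeled generators have zero component in $V_{p+1}\oplus\cdots\oplus V_m$ and may be dropped from the spanning relation) is exactly the right repair of the naive one-summand induction, whose failure in the presence of repeated maximal weights you correctly diagnosed.

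The genuine gap is the step you defer to ``a small separate argument'': selecting $p$ \emph{distinct} generators $v_{k_1},\dots,v_{k_p}$ of weight $\mu_1$ with $\pi_i(v_{k_i})\neq 0$ for $i=1,\dots,p$. This is not a routine consequence of ``linear independence of the highest weight components''; it is a system-of-distinct-representatives statement and is really the combinatorial core of the whole lemma, so it cannot be left as an assertion. (A greedy choice can fail: with $p=2$ and weight-$\mu_1$ components $(1,1)$ and $(1,0)$, assigning the first coordinate to the vector $(1,1)$ leaves no admissible choice for the second.) To close it, argue as follows: since $U(\lie n^-)\lie n^-$ strictly lowers weights and $\mu_1$ is maximal, projecting the spanning relation to $W:=(V_1\oplus\cdots\oplus V_p)_{\mu_1}$ shows that the vectors $\pi_{\le p}(v_k)$ with $\nu_k=\mu_1$ span the $p$-dimensional space $W$; choose $p$ of them forming a basis, and note that the resulting $p\times p$ matrix $\bigl(\pi_i(v_k)\bigr)$ (coordinates taken along the highest-weight lines of $V_1,\dots,V_p$) has nonzero determinant, so some permutation term in its Leibniz expansion is nonzero, which is precisely the required matching. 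With this inserted, your induction goes through. You might also note that the same determinant argument, applied weight space by weight space to $V/\lie n^- V\cong\bigoplus_j\mathbb C\,\bar w_j$ (the images $\bar v_k$ span this quotient because $U(\lie n^-)=\mathbb C\oplus U(\lie n^-)\lie n^-$), proves the lemma in one stroke and dispenses with the induction and the case analysis on repeated maximal weights altogether.
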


\subsection{The main category}

Let $\mathcal G=\mathcal G(\lie a)$ be the category of $\mathbb Z^\ell$-graded $\lie a$-modules with finite-dimensional graded pieces. In particular, if $V\in\mathcal G$, then the graded piece $V[\gb r]$ is a finite-dimensional $\lie g$-module for every $\gb r\in\mathbb Z^\ell$. The morphisms in $\mathcal G$ are $\lie a$-module maps which are morphisms in $\cel V$. Let $\mathcal D = \mathcal G(\mathscr D(\lie a))$ be the category of $\mathbb Z$-graded $\mathscr D(\lie a)$-modules with finite-dimensional graded pieces.

Given $\gb r\in\mathbb Z^\ell$, denote by $\gb t^\gb r$ the monomial $t_1^{r_1}\cdots t_\ell^{r_\ell}\in\mathbb Z[[t_1^{\pm 1},\dots,t_\ell^{\pm 1}]]$ and, for $V\in\mathcal G$, define its graded character by
\begin{equation*}
\gch V = \sum_{\gb r\in\mathbb Z^\ell} \ch V[\gb r]\ \gb t^{\gb r}.
\end{equation*}
Consider the grade-shifting functor $\tau_{\gb r}:\mathcal G\to\mathcal G$, i.e., $\tau_{\gb r}V[\gb s]=V[\gb s+\gb r]$ for all $\gb s\in\mathbb Z^\ell$ and $\tau_{\gb r}$ is the identity on morphisms. Clearly, $\gch \tau_\gb r V=\gch V\ \gb t^\gb r$.
Similarly, for $r\in\mathbb Z$, one considers the functor $\tau_r:\cal G\to\cal G$.
We also have a functor $\ev:\mathcal F(\lie g)\to\mathcal G$ obtained by extending the action of $\lie g$ on a module $V$ to one of $\lie a$ on $V$ by setting $\lie a_+V=0$. We denote by $\ev_{\gb r}$ the composition $\tau_{\gb r}\ev:\mathcal F(\lie g)\to\mathcal G$. Similarly, we construct functors $\ev_r:\mathcal F(\lie g)\to\cal G$ for all $r\in\mathbb Z$. Set
\begin{equation*}
V(\lambda,\gb r)=\ev_{\gb r}V(\lambda) \qquad\text{and}\qquad V(\lambda,r)=\ev_rV(\lambda) \qquad\text{for all}\qquad \lambda\in P^+,\gb r\in\mathbb Z^\ell, r\in\mathbb Z.
\end{equation*}
We obviously have an isomorphism of $\mathscr D(\lie a)$-modules
\begin{equation*}
\mathscr D(V(\lambda,\gb r))\cong V(\lambda,\deg(\gb r)) \qquad\text{for all}\qquad \gb r\in\mathbb Z^\ell.
\end{equation*}
Set $\Lambda=P^+\times\mathbb Z^\ell$ and $\mathscr D(\Lambda)=\Lambda\times\mathbb Z$. Standard arguments prove the following theorem (cf. \cite[Proposition 2.3]{ckr:faces}).

\begin{thm}
If $V$ is a simple object of $\mathcal G$, then $V$ is isomorphic to $V(\lambda,\gb r)$ for a unique $(\lambda,\gb r)\in\Lambda$. In particular, if $V$ is a simple object of $\mathcal D$, then $V$ is isomorphic to $V(\lambda,r)$ for a unique $(\lambda, r)\in\mathscr D(\Lambda)$.\hfill\qedsymbol
\end{thm}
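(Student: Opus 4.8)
The plan is to reduce the classification to the classical one for $\mathcal F(\lie g)$ by showing that a simple $V\in\mathcal G$ must be concentrated in a single $\mathbb Z^\ell$-degree, with $\lie a_+$ acting trivially.

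First I would record two standard facts. (i) Since $V\neq 0$ is $\mathbb Z^\ell$-graded, it is generated over $\lie a$ by any nonzero homogeneous element $v\in V[\gb r]$: the submodule $U(\lie a)v$ is graded and nonzero, hence equals $V$ by simplicity. (ii) By PBW, after ordering a homogeneous basis of $\lie a$ so that a basis of $\lie g=\lie a[\gb 0]$ precedes one of $\lie a_+$, we have $U(\lie a)=U(\lie g)\,U(\lie a_+)$, while $U(\lie a_+)=\bigoplus_{\gb s\in\mathbb Z_+^\ell}U(\lie a_+)[\gb s]$. Since $U(\lie a_+)V[\gb r]$ is then supported in degrees $\gb u$ with $\deg(\gb u)\geq\deg(\gb r)$ and $U(\lie g)$ preserves the grading, (i) gives $V[\gb u]=0$ whenever $\deg(\gb u)<\deg(\gb r)$. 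Running this for every nonzero homogeneous element of $V$, any two degrees $\gb r,\gb u$ supporting $V$ satisfy $\deg(\gb r)\geq\deg(\gb u)$ and $\deg(\gb u)\geq\deg(\gb r)$; hence $V$ is supported in a single $\deg$-degree $r_*\in\mathbb Z$. This is the step I expect to require the most care: since $\mathbb Z^\ell$ is not totally ordered one cannot argue, as in the single-variable case of \cite{cg:minp}, with a lowest graded piece, and the point is precisely to push the grading forward along $\deg$ in order to detect $\deg$-homogeneity.

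The rest is then routine. For $\gb t\in\mathbb Z_+^\ell$ with $\deg(\gb t)\geq 1$ and any $\gb s$, one has $\lie a[\gb t]V[\gb s]\subseteq V[\gb s+\gb t]$ with $\deg(\gb s+\gb t)=r_*+\deg(\gb t)>r_*$, so $\lie a_+V=0$ and $V$ becomes a module for $\lie a/\lie a_+\cong\lie g$. Each graded piece $V[\gb s]$ is then a $\lie g$-submodule (as $\lie g$ preserves the grading), and since $\lie a_+$ acts trivially it is a subobject of $V$ in $\mathcal G$; by simplicity exactly one $V[\gb s]$ is nonzero, so $V=V[\gb r]$ for a unique $\gb r\in\mathbb Z^\ell$. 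As $V$ now lives in a single degree, every $\lie g$-submodule of $V[\gb r]$ is a subobject, so $V[\gb r]$ is a simple object of $\mathcal F(\lie g)$, necessarily finite-dimensional by the definition of $\mathcal G$, whence $V[\gb r]\cong V(\lambda)$ for a unique $\lambda\in P^+$ and $V\cong\ev_{\gb r}V(\lambda)=V(\lambda,\gb r)$. For uniqueness, an isomorphism $V(\lambda,\gb r)\cong V(\mu,\gb s)$ forces $\gb r=\gb s$ by comparing supports and then $\lambda=\mu$ by comparing $\lie g$-module structures. Finally, the statement for $\mathcal D$ follows by repeating the argument verbatim with $\mathbb Z^\ell$ replaced by $\mathbb Z$ and $\deg=\id$, applied to the $\mathbb Z_+$-graded Lie algebra $\mathscr D(\lie a)$ whose degree-zero part is again $\lie g$; it is literally the case $\ell=1$.
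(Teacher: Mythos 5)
Your proposal is correct and is essentially the standard argument the paper itself invokes (it omits the proof, citing the $\ell=1$ case in \cite{ckr:faces}): use that a nonzero homogeneous vector generates, push the $\mathbb Z^\ell$-grading forward along $\deg$ to see that $\lie a_+$ must act trivially, and then reduce to the classification of simple finite-dimensional $\lie g$-modules together with the grade shift. Your observation about handling the lack of a total order on $\mathbb Z^\ell$ via $\deg$ is exactly the intended adaptation, so there is nothing to add.
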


Given $V\in\mathcal G$, set
$$[V:V(\lambda,\gb r)]=\dim\Hom_\lie g(V[\gb r]:V(\lambda)) \qquad\text{for all}\qquad \lambda\in P^+, \gb r\in\mathbb Z^\ell$$
and notice that
\begin{equation}
\gch V = \sum_{(\lambda,\gb r)\in\Lambda} [V:V(\lambda,\gb r)]\ \ch V(\lambda)\ \gb t^{\gb r}.
\end{equation}

\subsection{Bounded objects and tensor products}

Given $\gb r\in\mathbb Z^\ell$, define the full subcategories $\mathcal G^\gb r$ and $\mathcal G_\gb r$ of $\mathcal G$ by the requirement
\begin{equation*}
V\in\mathcal G^\gb r,\  V[\gb s] \ne 0\ \Longrightarrow\ \gb r-\gb s\in\mathbb Z_+^\ell \qquad\text{and}\qquad V\in\mathcal G_\gb r,\  V[\gb s] \ne 0\ \Longrightarrow\ \gb s-\gb r\in\mathbb Z_+^\ell.
\end{equation*}
Then, define the subcategory $\mathcal G^b$ by the requirement
\begin{equation*}
V\in\mathcal G^b\ \Longleftrightarrow\ V\in\mathcal G^\gb r \qquad\text{for some}\qquad \gb r\in\mathbb Z^\ell.
\end{equation*}
Thus, the objects of $\mathcal G^b$ are those with grade bounded from above.
Similarly one defines the category $\mathcal G_b$ of objects with grade bounded from below.
Notice that if $V\in\mathcal G_b$, the module $\mathscr D(V)$ has grade bounded from below, but not necessarily finite-dimensional graded pieces.

The enveloping algebra $U(\lie a)$ is naturally an $\lie a$-module under $x\cdot a=[x,a]$ where $[x,a]=xa-ax$ is the commutator of $U(\lie a)$ and has a natural $\mathbb Z_+^\ell$-gradation. The following propositions are easily established (the proofs can be found in \cite{bia:thesis}).

\begin{prop}\label{p:gpUa}
For all $\gb r\in\mathbb Z_+^\ell$ we have an isomorphism of $\lie g$-modules $U(\lie a_+)[\gb r]\cong S^{(\gb r)}(\lie a)$ where
$$S^{(\gb r)}(\lie a) := \opl_{k}^{}\left( \otm_{\gb s\in\mathbb Z_+^\ell\setminus\{\gb 0\}}^{} {\rm Sym}^{k(\gb s)}\lie a[\gb s]\right)$$
and the sum is over all functions $k: \mathbb Z_+^\ell\setminus\{\gb 0\} \to \mathbb Z_+$ such that $\sum\limits_{\gb s\in\mathbb Z_+^\ell\setminus\{\gb 0\}} k(\gb s)\gb s=\gb r$. In particular, $U(\lie a_+)\in\mathcal G_b$.
\hfill\qedsymbol
\end{prop}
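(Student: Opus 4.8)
The plan is to identify $U(\lie a_+)$ with the positive part of the symmetric algebra of $\lie a$ in the graded sense, so the first step is to recall that $\lie a_+$ is a $\mathbb Z_+^\ell$-graded Lie algebra and, by the Poincar\'e--Birkhoff--Witt theorem applied in the graded category, there is an isomorphism of $\mathbb Z_+^\ell$-graded vector spaces (in fact of $\lie g$-modules, since $\lie g=\lie a[\gb 0]$ acts by the adjoint action and PBW is $\lie g$-equivariant) between $U(\lie a_+)$ and the symmetric algebra $S(\lie a_+)$. Here one must be mildly careful: the PBW isomorphism $U(\lie a_+)\cong S(\lie a_+)$ is the associated-graded isomorphism for the canonical filtration, but since everything is also $\mathbb Z_+^\ell$-graded and the bracket of two elements of $\lie a_+$ lands in $\lie a_+$ with additive degree, the filtration degree and the $\mathbb Z_+^\ell$-degree are compatible, and one gets an honest $\lie g$-module isomorphism in each fixed multidegree $\gb r$. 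So the first step is to write $U(\lie a_+)[\gb r]\cong S(\lie a_+)[\gb r]$ as $\lie g$-modules.

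Next I would compute $S(\lie a_+)[\gb r]$ explicitly. Since $\lie a_+=\bigoplus_{\gb s\in\mathbb Z_+^\ell\setminus\{\gb 0\}}\lie a[\gb s]$ is a direct sum of graded pieces, its symmetric algebra decomposes as the tensor product $S(\lie a_+)\cong\bigotimes_{\gb s\ne\gb 0}\mathrm{Sym}\,\lie a[\gb s]$, and this is an isomorphism of $\mathbb Z_+^\ell$-graded $\lie g$-modules where $\mathrm{Sym}^j\lie a[\gb s]$ sits in multidegree $j\gb s$. Taking the multidegree-$\gb r$ component then forces, for each choice of exponents $j_{\gb s}=k(\gb s)$, the constraint $\sum_{\gb s\ne\gb 0}k(\gb s)\gb s=\gb r$, which is exactly the condition in the statement; summing the resulting tensor products $\bigotimes_{\gb s}\mathrm{Sym}^{k(\gb s)}\lie a[\gb s]$ over all such functions $k$ yields $S^{(\gb r)}(\lie a)$. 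Because only finitely many $\gb s\preceq\gb r$ exist and each $\lie a[\gb s]$ is finite-dimensional, the sum is finite and every summand is finite-dimensional, so $S^{(\gb r)}(\lie a)$ is a well-defined finite-dimensional $\lie g$-module. Chaining the two isomorphisms gives $U(\lie a_+)[\gb r]\cong S^{(\gb r)}(\lie a)$.

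For the final assertion that $U(\lie a_+)\in\mathcal G_b$, I would observe that $U(\lie a_+)$ is concentrated in multidegrees $\gb r\in\mathbb Z_+^\ell$ (it is generated by $\lie a_+$, whose degrees all lie in $\mathbb Z_+^\ell$, so products of such elements stay in $\mathbb Z_+^\ell$), hence $U(\lie a_+)\in\mathcal G_{\gb 0}\subseteq\mathcal G_b$; and by the explicit description each graded piece is finite-dimensional, so $U(\lie a_+)$ is genuinely an object of $\mathcal G_b$ rather than merely grade-bounded-below. One should also note that the $\lie g$-action referred to here is the adjoint action of $\lie g=\lie a[\gb 0]$ on $U(\lie a_+)$ by commutators, which under PBW corresponds to the natural $\lie g$-action on $S(\lie a_+)$ induced from the $\lie g$-module structure on each $\lie a[\gb s]$.

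The only real subtlety — and the step I would flag as needing the most care — is the $\lie g$-equivariance of the PBW isomorphism in each fixed multidegree. The standard PBW theorem gives a vector-space (or associated-graded) isomorphism $U(\lie a_+)\cong S(\lie a_+)$; upgrading it to a $\lie g$-module isomorphism requires noting that the adjoint action of $\lie g$ preserves the canonical filtration of $U(\lie a_+)$ and induces on the associated graded exactly the symmetric-algebra action, together with the fact that in a fixed multidegree $\gb r$ the filtration is finite and $\lie g$ is semisimple, so the (possibly split) filtration recombines into the direct sum $S^{(\gb r)}(\lie a)$ with no extension ambiguity. Once this is in place the rest is the bookkeeping of degrees described above; full details are in \cite{bia:thesis}.
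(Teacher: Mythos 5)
Your argument is correct and is essentially the intended one: the paper omits the proof (deferring to \cite{bia:thesis}), but the standard route is exactly what you describe — a PBW comparison $U(\lie a_+)\cong S(\lie a_+)$ compatible with the $\mathbb Z_+^\ell$-grading and the adjoint $\lie g$-action, followed by the multidegree decomposition of $S(\lie a_+)=\bigotimes_{\gb s\ne\gb 0}\mathrm{Sym}\,\lie a[\gb s]$, which is the same identification the paper invokes later in the form $\mathscr P(V)\cong S(\lie a_+)\otimes V$. The subtlety you flag is handled correctly (filtration preserved by $\ad\lie g$, finite in each multidegree, semisimplicity splits it); alternatively one can bypass it by noting that the symmetrization map $S(\lie a_+)\to U(\lie a_+)$ is itself $\lie g$-equivariant and grading-preserving, giving the isomorphism directly.
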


In the next proposition we equip $\wedge^j\lie a_+$ with the natural $\mathbb Z_+^\ell$-gradation coming from that of $\lie a$.

\begin{prop}\label{p:wedgea+}
For all $\gb r\in\mathbb Z_+^\ell$ and $j\in\mathbb Z_+$ we have an isomorphism of $\lie g$-modules
$$(\wedge^j\lie a_+)[\gb r]\cong \opl_{k}^{}\left( \otm_{\gb s\in\mathbb Z_+^\ell\setminus\{\gb 0\}}^{}\wedge^{k(\gb s)}\lie a[\gb s]\right),$$
where the sum is over all functions $k: \mathbb Z_+^\ell\setminus\{\gb 0\} \to \mathbb Z_+$ such that $\sum\limits_{\gb s\in\mathbb Z_+^\ell\setminus\{\gb 0\}} k(\gb s)\gb s=\gb r$ and $\sum\limits_{\gb s\in\mathbb Z_+^\ell\setminus\{\gb 0\}} k(\gb s)=j$. In particular, $\wedge^j\lie a_+\in\mathcal G_b$.
\hfill\qedsymbol
\end{prop}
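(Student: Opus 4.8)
The plan is to follow the pattern of the proof of Proposition~\ref{p:gpUa}, with exterior powers in place of symmetric powers; in fact it is slightly simpler since $\wedge^\bullet$ is already ``graded-commutative free'' and no PBW-type input is needed. First I would record that, as a $\lie g$-module carrying the $\mathbb Z_+^\ell$-gradation induced from $\lie a$, one has $\lie a_+=\bigoplus_{\gb s\in\mathbb Z_+^\ell\setminus\{\gb 0\}}\lie a[\gb s]$, using that for $\gb s\in\mathbb Z_+^\ell$ the condition $\deg(\gb s)\neq 0$ is equivalent to $\gb s\neq\gb 0$. The workhorse is the standard natural isomorphism of vector spaces $\wedge^j(V\oplus W)\cong\bigoplus_{a+b=j}\wedge^a V\otimes\wedge^b W$, sending $v_1\wedge\cdots\wedge v_a$ tensored with $w_1\wedge\cdots\wedge w_b$ to $v_1\wedge\cdots\wedge v_a\wedge w_1\wedge\cdots\wedge w_b$. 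Because this is natural in $V$ and $W$, it is automatically $\lie g$-equivariant whenever $V,W$ are $\lie g$-modules, and it is compatible with any further gradings carried by $V$ and $W$. Iterating this identity over finite collections of graded pieces $\lie a[\gb s]$ and passing to the colimit, I would obtain an isomorphism of $\lie g$-modules $\wedge^\bullet\lie a_+\cong\bigotimes_{\gb s\neq\gb 0}\wedge^\bullet\lie a[\gb s]$, where the right-hand side denotes the restricted tensor product, i.e.\ tuples all but finitely many of whose factors equal $\wedge^0\lie a[\gb s]=\mathbb C$.

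Next I would unwind the bigradation. A simple tensor on the right-hand side is recorded by a finitely supported function $k:\mathbb Z_+^\ell\setminus\{\gb 0\}\to\mathbb Z_+$, the corresponding summand being $\bigotimes_{\gb s}\wedge^{k(\gb s)}\lie a[\gb s]$. Since $\wedge^{k(\gb s)}\lie a[\gb s]$ lies in $\mathbb Z_+^\ell$-degree $k(\gb s)\gb s$, this summand has total multidegree $\sum_{\gb s}k(\gb s)\gb s$ and exterior degree $\sum_{\gb s}k(\gb s)$; collecting the summands with $\sum_{\gb s}k(\gb s)\gb s=\gb r$ and $\sum_{\gb s}k(\gb s)=j$ then yields exactly the asserted description of $(\wedge^j\lie a_+)[\gb r]$.

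For the final assertion I would observe that whenever $k(\gb s)\ge 1$ occurs in such a $k$ with $\sum_{\gb s}k(\gb s)\gb s=\gb r$, the vector $\gb s$ satisfies $\gb r-\gb s\in\mathbb Z_+^\ell$; in particular $(\wedge^j\lie a_+)[\gb r]=0$ whenever $\gb r\notin\mathbb Z_+^\ell$, so $\wedge^j\lie a_+\in\mathcal G_{\gb 0}\subseteq\mathcal G_b$, and each graded piece is a finite direct sum of tensor products of the finite-dimensional $\lie g$-modules $\wedge^{k(\gb s)}\lie a[\gb s]$, hence finite-dimensional, so indeed $\wedge^j\lie a_+\in\mathcal G$. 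I expect the only point requiring genuine care — and the step I would write out most carefully — to be the compatibility of the exterior-power-of-a-direct-sum decomposition simultaneously with the $\lie g$-action and with the $\mathbb Z_+^\ell$-gradation when infinitely many graded pieces are involved (one passes to the direct limit over finite index sets, which is harmless since a fixed $\gb r$ and $j$ only see the finitely many $\gb s$ with $\gb r-\gb s\in\mathbb Z_+^\ell$). Once this naturality is in hand, the remainder is bookkeeping entirely parallel to Proposition~\ref{p:gpUa}.
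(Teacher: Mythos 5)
Your proposal is correct, and it is essentially the argument the paper relies on: the paper itself omits the proof (deferring to \cite{bia:thesis}, with the remark that the proposition is ``easily established''), and the intended argument is exactly the standard $\lie g$-equivariant decomposition $\wedge^j(V\oplus W)\cong\bigoplus_{a+b=j}\wedge^aV\otimes\wedge^bW$ iterated over the graded pieces $\lie a[\gb s]$, followed by the multidegree bookkeeping you carry out. Your treatment of the only delicate points — that for fixed $\gb r$ and $j$ only the finitely many $\gb s$ with $\gb r-\gb s\in\mathbb Z_+^\ell$ contribute, and that this gives both boundedness below and finite-dimensionality of the graded pieces, hence $\wedge^j\lie a_+\in\mathcal G_b$ — is exactly what is needed.
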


Notice that the categories $\mathcal G_b$ and $\mathcal G^b$ become tensor categories by setting
\begin{equation*}
(V\otimes W)[\gb r]= \opl_{\substack{(\gb s,\gb s')\in\mathbb Z^\ell\times\mathbb Z^\ell:\\ \gb s+\gb s'=\gb r}}^{} V[\gb s]\otimes V[\gb s'].
\end{equation*}

We have the following corollary of Proposition \ref{p:gpUa}.

\begin{cor}\label{c:proj}
For all $V\in\mathcal G_b$, $U(\lie a)\otimes_{U(\lie g)} V$ is an object of $\mathcal G_b$.\hfill\qedsymbol
\end{cor}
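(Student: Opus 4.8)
The plan is to reduce everything to a PBW-type decomposition. Since $\lie a_+$ is an ideal of $\lie a$ with $\lie a=\lie g\oplus\lie a_+$ as $\lie g$-modules (and as $\mathbb Z_+^\ell$-graded vector spaces), the Poincar\'e--Birkhoff--Witt theorem yields a $\mathbb Z_+^\ell$-graded isomorphism of right $U(\lie g)$-modules $U(\lie a)\cong U(\lie a_+)\otimes_{\mathbb C}U(\lie g)$, with $U(\lie g)$ concentrated in degree $\gb 0$. Consequently, for any $V\in\mathcal G_b$ there is an isomorphism in $\cel V$
$$U(\lie a)\otimes_{U(\lie g)}V\;\cong\;U(\lie a_+)\otimes_{\mathbb C}V,$$
where the right-hand side carries the tensor-product $\mathbb Z^\ell$-grading and the $\lie a$-action on the left is by left multiplication on the $U(\lie a)$-factor, which is visibly grade-preserving since $\lie a[\gb s]\cdot U(\lie a)[\gb t]\subseteq U(\lie a)[\gb s+\gb t]$. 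So it suffices to check that $U(\lie a_+)\otimes_{\mathbb C}V$ lies in $\mathcal G_b$, i.e.\ has grade bounded below and finite-dimensional graded pieces.

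Next I would invoke Proposition \ref{p:gpUa}: it gives $U(\lie a_+)\in\mathcal G_b$ and, more precisely, that $U(\lie a_+)[\gb s]=0$ unless $\gb s\in\mathbb Z_+^\ell$, each such graded piece being finite-dimensional --- the explicit description as $S^{(\gb s)}(\lie a)$ is a finite direct sum of finite-dimensional $\lie g$-modules, because the functions $k$ contributing to degree $\gb s$ are supported on finitely many $\gb t$ with $\deg(\gb t)\le\deg(\gb s)$ and satisfy $k(\gb t)\le\deg(\gb s)$. Fix $\gb p\in\mathbb Z^\ell$ with $V\in\mathcal G_{\gb p}$. Then a degree-$\gb r$ summand $U(\lie a_+)[\gb s]\otimes V[\gb s']$ with $\gb s+\gb s'=\gb r$ is nonzero only if $\gb s\in\mathbb Z_+^\ell$ and $\gb s'-\gb p\in\mathbb Z_+^\ell$, whence $\gb r-\gb p=\gb s+(\gb s'-\gb p)\in\mathbb Z_+^\ell$; this shows $U(\lie a_+)\otimes_{\mathbb C}V\in\mathcal G_{\gb p}$, so its grade is bounded below. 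Moreover, for fixed $\gb r$ there are only finitely many pairs $(\gb s,\gb s')$ with $\gb s,\gb s'-\gb p\in\mathbb Z_+^\ell$ and $\gb s+\gb s'=\gb r$, and for each the space $U(\lie a_+)[\gb s]$ is finite-dimensional while $V[\gb s']$ is finite-dimensional since $V\in\mathcal G$; hence $(U(\lie a_+)\otimes_{\mathbb C}V)[\gb r]$ is finite-dimensional. Therefore $U(\lie a)\otimes_{U(\lie g)}V\in\mathcal G_b$.

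The argument is essentially bookkeeping with the $\mathbb Z_+^\ell$-grading, and there is no serious obstacle; the one point deserving care --- which I would isolate as a short lemma if the multigraded PBW formalism were not already taken for granted --- is that the isomorphism $U(\lie a)\cong U(\lie a_+)\otimes_{\mathbb C}U(\lie g)$ respects the $\mathbb Z^\ell$-grading and the right $U(\lie g)$-module structure, so that $-\otimes_{U(\lie g)}V$ genuinely produces $U(\lie a_+)\otimes_{\mathbb C}V$ with the grading used above. Once that is in place, boundedness below is forced because $U(\lie a_+)$ and $V$ are both bounded below, and local finiteness follows from Proposition \ref{p:gpUa} together with the defining property of objects of $\mathcal G$. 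Note that $V$ need not be finite-dimensional as a whole --- only an object of $\mathcal G_b$ --- so finiteness must be checked graded piece by graded piece, as above, rather than globally.
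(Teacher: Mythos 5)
Your argument is correct and follows exactly the route the paper intends: the corollary is stated as a consequence of Proposition \ref{p:gpUa} via the PBW identification $U(\lie a)\otimes_{U(\lie g)}V\cong U(\lie a_+)\otimes_{\mathbb C}V$ (the paper later records this as $\mathscr P(V)\cong S(\lie a_+)\otimes V$) together with the tensor-product structure on $\mathcal G_b$. Your explicit bookkeeping of the finitely many pairs $(\gb s,\gb s')$ contributing to a fixed graded piece is precisely the check the paper leaves implicit, so there is nothing to add.
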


\subsection{Projective modules}

Our first result is the construction of projective resolutions for the simple objects of $\mathcal G$.
Given $V\in\mathcal G_b$, define
\begin{equation*}
\mathscr P(V):= U(\lie a)\otimes_{U(\lie g)} V.
\end{equation*}
By Corollary \ref{c:proj}, $\mathscr P(V)\in\mathcal G_b$. Thus, the assignment $V\mapsto\mathscr P(V)$ can be naturally extended to an endo-functor $\mathscr P$ of $\mathcal G_b$.

Given $j\in\mathbb Z_+$, define
\begin{equation*}
\mathscr P_j(V): = \mathscr P((\wedge^j\lie a_+)\otimes V)
\end{equation*}
which is again an object of $\mathcal G_b$, since $\wedge^j\lie a_+\in\mathcal G_b$ by Proposition \ref{p:wedgea+}.  Notice that if $V=V[\gb r]$ for some $\gb r\in\mathbb Z^\ell$, then
\begin{equation*}
\mathscr P_j(V)[\gb s]=0 \qquad\text{for all}\qquad \gb s\in\mathbb Z^\ell \qquad\text{such that}\qquad \deg(\gb s)<j+\deg(\gb r).
\end{equation*}
Set $P_j(\lambda,\gb r)=\mathscr P_j(V(\lambda,\gb r)), \ P_j(\lambda,r)=\mathscr P_j(V(\lambda,r))$,  and notice that
\begin{equation*}
\mathscr D(P_j(\lambda,\gb r))\cong P_j(\lambda,\deg(\gb r)) \qquad\text{for all}\qquad j\in\mathbb Z_+, \lambda\in P^+, \gb r\in\mathbb Z^\ell.
\end{equation*}

Given $(\mu,\gb s), (\lambda,\gb r)$, we say that $(\mu,\gb s)$ covers $(\lambda,\gb r)$ if $\gb s-\gb r\in\mathbb Z_+^\ell\setminus\{\gb 0\}$ and $\mu-\lambda$ is a weight of the $\lie g$-module $\lie a[\gb s-\gb r]$. Further, consider the partial order $\preccurlyeq$ obtained by the transitive and reflexive closure of the relation $(\lambda,\gb r)\prec (\mu,\gb s)$ if $(\mu,\gb s)$ covers $(\lambda,\gb r)$. Similarly one defines a partial order $\preccurlyeq$ on $\mathscr D(\Lambda)$.

For $\ell=1$, the next proposition was proved in \cite{cg:hwcat,ckr:faces} and all the statements can be proved in the same manner for arbitrary $\ell$ (all details can be found in \cite{bia:thesis}).

\begin{prop}\label{p:proj}
Let $\lambda\in P^+,\gb r,\gb s\in\mathbb Z^\ell$, and $V\in\mathcal G$.
\begin{enumerate}[(a)]
\item $\mathscr P(V)$ is a projective object of $\mathcal G$ and canonically surjects onto $V$.
\item\label{p:projrel} $P(\lambda,\gb r)$ is isomorphic to the $\lie a$-module generated by a vector $v$ of degree $\gb r$ satisfying the following defining relations
\begin{equation*}
\lie n^+v=0, \qquad hv=\lambda(h)v, \qquad (x_{\alpha_i}^-)^{\lambda(h_i)+1}v=0 \qquad\forall\ h\in\lie h, i\in I.
\end{equation*}
\item Let $K(\lambda,\gb r)$ be the kernel of the projection $P(\lambda,\gb r)\to V(\lambda,\gb r)$. Then, $[K(\lambda,\gb r):V(\mu,\gb s)]\ne 0$ only if ${\rm Hom}_\lie g(\lie a[\gb s - \gb r ] \otimes V(\lambda),v(\mu))\ne 0$. In particular, $(\mu,\gb s)$ covers $(\lambda,\gb r)$.
\item $[V:V(\lambda,\gb r)]=\dim{\rm Hom}_{\mathcal G}(P(\lambda,\gb r), V)$ \ and \ ${\rm Hom}_{\mathcal G}(P(\lambda,\gb r), V)\cong {\rm Hom}_{\lie g}(V(\lambda),V[\gb r])$.
\item For $j>0$, $[P_j(\lambda,\gb r):V(\mu,\gb s)]\ne 0$ only if $(\lambda,\gb r)\prec (\mu,\gb s)$.
\item If $\mathscr D(V)=V[r]$ for some $r\in\mathbb Z$, then $\mathscr P(V)$ is the projective cover of $V$ in $\mathcal G$. In particular, $P(\lambda,\gb r)$ is the projective cover of $V(\lambda,\gb r)$.
\hfill\qedsymbol
\end{enumerate}
\end{prop}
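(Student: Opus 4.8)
The plan is to establish the six assertions of Proposition \ref{p:proj} in essentially the order stated, since each builds on the previous ones, and to reduce everything to standard adjunction and PBW-type arguments that are insensitive to whether $\ell=1$ or $\ell>1$. First I would prove (a): the functor $\mathscr P = U(\lie a)\otimes_{U(\lie g)}(-)$ is left adjoint to the restriction functor $\mathcal G\to\mathcal F(\lie g)$ (restricting a graded $\lie a$-module to its degree-$\gb 0$ part is not quite right — rather, one uses the adjunction $\Hom_{\mathcal G}(U(\lie a)\otimes_{U(\lie g)}V, W)\cong\Hom_{\lie g}(V,W)$, where on the right $W$ is viewed just as a $\lie g$-module via $\lie g = \lie a[\gb 0]$, with the grading bookkeeping built into the definition of $\mathscr P$ on $\mathcal G_b$). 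Since restriction to $\lie g$ is exact and sends surjections to surjections, $\mathscr P(V)$ is projective; the counit of the adjunction gives the canonical surjection $\mathscr P(V)\to V$ when $V$ already carries an $\lie a$-module structure (here $V=V(\lambda,\gb r)$, so this is immediate). For (b), one applies (a) with $V=V(\lambda,\gb r)$ and uses the defining relations of $V(\lambda)$ recalled in Section 1.2 together with the PBW theorem for $U(\lie a)=U(\lie a_+)U(\lie g)$: the generator $v = 1\otimes v_\lambda$ has degree $\gb r$, satisfies the listed highest-weight relations, and any $\lie a$-module generated by such a vector is a quotient of $P(\lambda,\gb r)$; conversely the universal property of $\mathscr P$ shows $P(\lambda,\gb r)$ is itself generated by $v$ subject to exactly these relations.

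Next I would handle (c) and (d), which are bookkeeping consequences of the structure $P(\lambda,\gb r)\cong U(\lie a_+)\otimes V(\lambda)$ as a graded $\lie g$-module (PBW again, using $\lie a = \lie a_+\oplus\lie g$). The graded piece $P(\lambda,\gb r)[\gb s]$ is then $U(\lie a_+)[\gb s-\gb r]\otimes V(\lambda)$, and by Proposition \ref{p:gpUa} this is $S^{(\gb s-\gb r)}(\lie a)\otimes V(\lambda)$; the degree-$\gb 0$ part $U(\lie a_+)[\gb 0]=\mathbb C$ contributes the copy of $V(\lambda)$ mapping isomorphically to $V(\lambda,\gb r)$, so $K(\lambda,\gb r)[\gb s]\neq 0$ forces $\gb s-\gb r\in\mathbb Z_+^\ell\setminus\{\gb 0\}$ and hence a nonzero $\lie g$-map $\lie a[\gb s-\gb r]\otimes V(\lambda)\to V(\mu)$ after extracting $\lie g$-socle components — i.e. $(\mu,\gb s)$ covers $(\lambda,\gb r)$. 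For (d) the first equality is the definition of $[V:V(\lambda,\gb r)]$ combined with $\Hom_{\mathcal G}(P(\lambda,\gb r),V)\cong\Hom_{\lie g}(V(\lambda),V[\gb r])$, which is precisely the adjunction from (a) restricted to degree $\gb r$ (the grading shift $\tau_{\gb r}$ handles the degree), together with $\dim\Hom_{\lie g}(V(\lambda),V[\gb r])=\dim\Hom_{\lie g}(V(\lambda),V[\gb r])$ being the multiplicity since $V[\gb r]$ is a finite-dimensional, hence completely reducible, $\lie g$-module.

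For (e), I would argue that $P_j(\lambda,\gb r)=\mathscr P((\wedge^j\lie a_+)\otimes V(\lambda,\gb r))$ is, as a graded $\lie g$-module, $U(\lie a_+)\otimes(\wedge^j\lie a_+)\otimes V(\lambda)$ with the degree shifted by $\gb r$; when $j>0$ every summand of $\wedge^j\lie a_+$ lives in degree $\gb s-\gb r$ with $\gb s-\gb r$ a sum of $j\geq 1$ nonzero elements of $\mathbb Z_+^\ell$ (Proposition \ref{p:wedgea+}), and combining with Proposition \ref{p:wedgea+} and the covering relation definition one sees that every $\lie g$-constituent $V(\mu)$ appearing in degree $\gb s$ satisfies $(\lambda,\gb r)\prec(\mu,\gb s)$ — this is a finite iteration of the covering step, one step coming from $\wedge^j$ and the rest from the $U(\lie a_+)$ factor, all of which only add nonzero elements of $\mathbb Z_+^\ell$ to the degree and add weights of $\lie g$-modules $\lie a[\cdot]$ to the highest weight. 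Finally (f): $\mathscr D(V)=V[r]$ means $V$ is concentrated in a single total degree, so its radical is contained in strictly higher total degree; since $\mathscr P(V)$ surjects onto $V$ and, by (c)/(e), the kernel $K$ has all its $\lie g$-constituents in strictly higher total degree, $K\subseteq\rad\mathscr P(V)$, which is exactly the statement that $\mathscr P(V)\to V$ is a projective cover. The main obstacle I anticipate is (e): carefully tracking that the combination of the exterior-algebra degree shift and the symmetric-algebra (PBW) degree shift never produces a constituent $(\mu,\gb s)$ that fails to strictly cover $(\lambda,\gb r)$, which in the multigraded setting requires being careful that ``sum of $j$ nonzero elements of $\mathbb Z_+^\ell$'' behaves well under the partial order $\preccurlyeq$ — but since $\preccurlyeq$ is built precisely from such covering moves and weights of the modules $\lie a[\gb t]$, this is a routine (if slightly tedious) induction using Propositions \ref{p:gpUa} and \ref{p:wedgea+}.
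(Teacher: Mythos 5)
Your parts (a), (b), (d), (e) and (f) follow the standard induction/PBW route (Frobenius reciprocity for $U(\lie a)\otimes_{U(\lie g)}(-)$, the $\lie g$-module isomorphism $P(\lambda,\gb r)\cong U(\lie a_+)\otimes V(\lambda)$, complete reducibility of the graded pieces), which is exactly the approach of the $\ell=1$ proofs the paper delegates to its references; up to minor imprecisions (in (a) you must invoke complete reducibility of each graded piece to get exactness of $\Hom_{\lie g}(V,-)$, and in (e) the wedge factor contributes possibly several covering steps, not one) these are fine. The genuine gap is in (c). As a graded $\lie g$-module, $K(\lambda,\gb r)[\gb s]\cong U(\lie a_+)[\gb s-\gb r]\otimes V(\lambda)\cong S^{(\gb s-\gb r)}(\lie a)\otimes V(\lambda)$ by Proposition~\ref{p:gpUa}, and $S^{(\gb s-\gb r)}(\lie a)$ contains tensor products of several $\lie a[\gb t_i]$ with $\sum_i\gb t_i=\gb s-\gb r$, not only $\lie a[\gb s-\gb r]$. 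Hence ``extracting $\lie g$-socle components'' of $K(\lambda,\gb r)[\gb s]$ does \emph{not} produce a nonzero map $\lie a[\gb s-\gb r]\otimes V(\lambda)\to V(\mu)$; that inference fails. Concretely, for $\lie a=\lie g\otimes\mathbb C[t]$ and $\lambda=0$, the piece $K(0,0)[2]$ contains $S^2\lie g\supseteq V(2\theta)$ ($\theta$ the highest root), while $\lie a[2]\otimes V(0)\cong\lie g$ contains no copy of $V(2\theta)$ and $(2\theta,2)$ does not cover $(0,0)$. What your argument actually yields is only comparability in $\preccurlyeq$ (a chain of covering steps), i.e.\ the content of part (e), not the single covering step asserted in (c).

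Part (c) must be proved (and read) at the level of homomorphisms into the simple module, as in the $\ell=1$ sources. Since $U(\lie a_+)^+=\lie a_+ + \lie a_+U(\lie a_+)^+$, the kernel satisfies $K(\lambda,\gb r)=\lie a_+\otimes V(\lambda)+\lie a_+K(\lambda,\gb r)$, i.e.\ it is generated as an $\lie a$-module by the subspace $\lie a_+\otimes V(\lambda)$ sitting in degrees $\gb r+\gb t$, $\gb t\in\mathbb Z_+^\ell\setminus\{\gb 0\}$. Any nonzero morphism $K(\lambda,\gb r)\to V(\mu,\gb s)$ in $\mathcal G$ annihilates $\lie a_+K(\lambda,\gb r)$ and every graded piece other than the $\gb s$-th, hence factors through a $\lie g$-module quotient of $\lie a[\gb s-\gb r]\otimes V(\lambda)$, giving $\Hom_{\lie g}(\lie a[\gb s-\gb r]\otimes V(\lambda),V(\mu))\ne 0$ and, in particular, that $(\mu,\gb s)$ covers $(\lambda,\gb r)$. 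This ``head of the kernel'' argument is the missing idea; it is also precisely the input needed later for Lemma~\ref{l:inGGamma}(a) (the $\Ext^1$ vanishing criterion), so it cannot be bypassed by the composition-multiplicity bookkeeping you propose.
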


In what follows we use that we have an isomorphism of $\lie g$-modules
\begin{equation*}
\mathscr P(V) \cong S(\lie a_+)\otimes V \qquad\text{for all}\qquad V\in\mathcal G_b.
\end{equation*}
We also fix $(\lambda,\gb r)\in\Lambda$.
Let $d_0:P(\lambda,\gb r)\to V(\lambda,\gb r)$ be given by $d_0(u\otimes v) = uv$ for all $u\in\lie a_+, v\in V(\lambda,\gb r)$ and, for $j>0$, let $d_j:P_j(\lambda,\gb r)\to P_{j-1}(\lambda,\gb r)$ be the $\lie a$-module map determined by
\begin{equation*}
d_j = D_j\otimes {\rm Id}_{V(\lambda,\gb r)},
\end{equation*}
where $D=(D_j)_{j>0}$ is the Koszul differential on the Chevalley-Eilenberg complex for $\lie a_+$.

\begin{prop}
The sequence
$$\cdots{\longrightarrow} P_2(\lambda,\gb r)\stackrel{d_2}{\longrightarrow} P_1(\lambda,\gb r) \stackrel{d_1}{\longrightarrow} P(\lambda,\gb r)\stackrel{d_0}{\longrightarrow} V(\lambda,\gb r)\to 0$$
is a projective resolution of $V(\lambda,\gb r)$ in $\mathcal G_b$ and its image in $\cal D$ under $\mathscr D$ is a projective resolution of $V(\lambda,\deg(\gb r))$.
\end{prop}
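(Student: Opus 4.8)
The plan is to establish this in two stages: first, that the sequence is exact, and second, that each term is projective in $\mathcal G_b$ — the latter already following from Corollary \ref{c:proj} and the definition of $\mathscr P_j$, so the real content is exactness. The key observation is that, forgetting the $\lie a$-action and remembering only the $\lie g$-action and the grading, the functor $\mathscr P(-) = U(\lie a)\otimes_{U(\lie g)}(-)$ restricts on each graded piece to a free module over $U(\lie a_+)$ (equivalently, tensoring with $S(\lie a_+)$ as $\lie g$-modules, by the isomorphism $\mathscr P(V)\cong S(\lie a_+)\otimes V$ recorded just before the statement). Thus the complex
$$\cdots\to \mathscr P((\wedge^2\lie a_+)\otimes V(\lambda,\gb r))\to \mathscr P((\wedge^1\lie a_+)\otimes V(\lambda,\gb r))\to \mathscr P(V(\lambda,\gb r))\to V(\lambda,\gb r)\to 0$$
is, as a complex of graded $\lie g$-modules, obtained from the standard Chevalley--Eilenberg (Koszul) resolution of the trivial module over $U(\lie a_+)$,
$$\cdots\to U(\lie a_+)\otimes\wedge^2\lie a_+\to U(\lie a_+)\otimes\wedge^1\lie a_+\to U(\lie a_+)\to \mathbb C\to 0,$$
by applying $U(\lie a)\otimes_{U(\lie a_+)}(-)\otimes_{U(\lie g)} V(\lambda,\gb r)$, where the first tensor is along the PBW inclusion $U(\lie a_+)\hookrightarrow U(\lie a)$ and $U(\lie a)$ is free — in fact free on the $\lie g$-side — over $U(\lie a_+)$. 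Since the Chevalley--Eilenberg complex of $\lie a_+$ is exact (this is the standard homological fact that $\mathrm{Tor}^{U(\lie a_+)}_j(\mathbb C,\mathbb C)$ computes Lie algebra homology, but all we need is that this particular complex is a resolution of $\mathbb C$) and $U(\lie a)$ is flat over $U(\lie a_+)$, the resulting complex remains exact. The identification of the differentials $d_j = D_j\otimes\mathrm{Id}$ with this base-changed Koszul differential is exactly the content of the definition given above the statement, so no further bookkeeping is needed there.

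Next I would verify that this complex actually lives in $\mathcal G_b$ and is a complex of $\mathcal G$-morphisms, not merely of graded $\lie g$-modules. That each $P_j(\lambda,\gb r)$ lies in $\mathcal G_b$ is Corollary \ref{c:proj} combined with $\wedge^j\lie a_+\in\mathcal G_b$ (Proposition \ref{p:wedgea+}); that each $d_j$ is a morphism in $\mathcal G$ is clear since the Koszul differential $D_j$ is $\lie a$-equivariant and degree-preserving for the $\mathbb Z^\ell$-grading (the grade of $\wedge^j\lie a_+$ is inherited from $\lie a$, so $D_j$ drops no total degree — indeed $d_j$ preserves the $\mathbb Z^\ell$-grading). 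Projectivity in $\mathcal G_b$ is then immediate from Proposition \ref{p:proj}(a): each $P_j(\lambda,\gb r)=\mathscr P((\wedge^j\lie a_+)\otimes V(\lambda,\gb r))$ is of the form $\mathscr P(W)$ with $W\in\mathcal G_b$, hence projective.

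Finally, for the statement about $\mathscr D$, I would invoke the isomorphisms already recorded in the excerpt: $\mathscr D(P_j(\lambda,\gb r))\cong P_j(\lambda,\deg(\gb r))$ and $\mathscr D(V(\lambda,\gb r))\cong V(\lambda,\deg(\gb r))$, together with the fact that $\mathscr D$ is exact — it is just a regrouping of graded pieces (collecting all $\gb r$ with $\deg(\gb r)=r$ into a single piece indexed by $r$), hence an additive, exact functor from $\cel V$ to $\mathcal V^1$ that intertwines the two complexes and their differentials. Applying $\mathscr D$ termwise therefore yields the displayed projective resolution of $V(\lambda,\deg(\gb r))$ in $\mathcal D$; projectivity of the images follows by the same argument as above applied to $\mathscr D(\lie a)$, or directly since $\mathscr D$ of a projective resolution over a graded algebra, obtained by a Koszul construction, is again one.

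\textbf{The main obstacle} I anticipate is purely expository rather than mathematical: one must be careful that the Chevalley--Eilenberg differential, which a priori is defined for the $\mathbb Z$-graded Lie algebra $\mathscr D(\lie a_+)$ (or even ungraded), is compatible with the finer $\mathbb Z^\ell$-grading on $\lie a_+$ so that all maps are genuine morphisms in $\mathcal G$; since every bracket $[\lie a[\gb s],\lie a[\gb s']]\subseteq\lie a[\gb s+\gb s']$ respects the $\mathbb Z^\ell$-grading, this is automatic, but it should be stated. A secondary point worth a sentence is flatness of $U(\lie a)$ over $U(\lie a_+)$: by the PBW theorem $U(\lie a)\cong U(\lie a_+)\otimes U(\lie g)$ as a right $U(\lie a_+)$-module (using any vector-space splitting $\lie a=\lie a_+\oplus\lie g$ and the fact that $\lie a_+$ is an ideal), hence free, hence flat, and this is exactly what makes $U(\lie a)\otimes_{U(\lie a_+)}(-)$ preserve exactness of the Koszul complex.
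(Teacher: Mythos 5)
Your overall strategy (prove exactness directly by base-changing the Chevalley--Eilenberg resolution of $\mathbb C$ over $U(\lie a_+)$, get projectivity from Proposition \ref{p:proj}(a), and handle the second statement by exactness of $\mathscr D$) is reasonable and is essentially the argument underlying the cited literature; it is, however, not the paper's route. The paper observes that $\mathscr D$ does not change underlying vector spaces, so the two statements of the proposition are equivalent, and then simply quotes the $\mathbb Z$-graded case from \cite[Proposition 2.7(ii)]{ckr:faces}. Your treatment of projectivity and of the $\mathscr D$ half is fine.

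There is, however, a genuine gap in your exactness argument. You apply the composite functor $U(\lie a)\otimes_{U(\lie a_+)}(-)\otimes_{U(\lie g)}V(\lambda,\gb r)$ to the Chevalley--Eilenberg resolution and justify exactness of the result solely by flatness of $U(\lie a)$ over $U(\lie a_+)$. That only covers the first tensor. The second stage, $-\otimes_{U(\lie g)}V(\lambda,\gb r)$, is not an exact functor ($V(\lambda)$ is finite-dimensional, hence certainly not flat over $U(\lie g)$), so "the resulting complex remains exact" does not follow from what you wrote. Moreover, to even apply this functor you must equip the terms $U(\lie a)\otimes_{U(\lie a_+)}\bigl(U(\lie a_+)\otimes\wedge^j\lie a_+\bigr)\cong U(\lie a)\otimes\wedge^j\lie a_+$ with right $U(\lie g)$-structures commuting with the Koszul differential and check that $\bigl(U(\lie a)\otimes\wedge^j\lie a_+\bigr)\otimes_{U(\lie g)}V\cong U(\lie a)\otimes_{U(\lie g)}(\wedge^j\lie a_+\otimes V)$ intertwines the differentials with $d_j=D_j\otimes\mathrm{Id}$; this is not pure bookkeeping, since under the PBW identification moving $U(\lie g)$-factors past the elements of $\lie a_+$ occurring in $D_j$ produces commutator terms. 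The gap is repairable: after the flat base change the augmented complex $\cdots\to U(\lie a)\otimes\wedge^1\lie a_+\to U(\lie a)\to U(\lie g)\to 0$, with the appropriate (diagonal) right $\lie g$-structure, is a resolution of $U(\lie g)$ by modules that are free as right $U(\lie g)$-modules (PBW plus the tensor identity), and since $U(\lie g)$ is projective over itself this augmented complex is contractible as a complex of right $U(\lie g)$-modules; hence any additive functor, in particular $-\otimes_{U(\lie g)}V(\lambda,\gb r)$, preserves its exactness (equivalently, the homology after tensoring is $\operatorname{Tor}^{U(\lie g)}_{\bullet}(U(\lie g),V(\lambda))$, which vanishes in positive degrees). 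Alternatively, one can bypass the bimodule juggling altogether by filtering $P_\bullet(\lambda,\gb r)$ so that the associated graded complex is the ordinary Koszul complex $S(\lie a_+)\otimes\wedge^\bullet\lie a_+\otimes V(\lambda)$ of the vector space $\lie a_+$, whose exactness is classical. Some such argument must be supplied for your proof to be complete.
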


\begin{proof}
Since $\mathscr D$ is the identity when regarded as an endo-functor on the category of vector spaces, it suffices to prove one of the two statements. The second statement was proved in \cite[Proposition 2.7(ii)]{ckr:faces}.
\end{proof}

\subsection{Restricting the class of simple modules}

Next we study certain full subcategories of $\cal G$ obtained by choosing the simple objects which belong to it. This will eventually allow us to define a certain generalization of Kirillov-Reshetikhin modules (see Section \ref{ss:genrel}). Thus, given any subset $\Gamma$ of $\Lambda$, let $\mathcal G[\Gamma]$ be the full subcategory of $\mathcal G$ consisting of objects $V$ such that
\begin{equation*}
[V:V(\lambda,\gb r)]\ne 0\implies (\lambda,\gb r)\in\Gamma.
\end{equation*}
Quite clearly, $\mathcal G=\mathcal G[\Lambda]$ and the isomorphism classes of simple objects in $\mathcal G[\Gamma]$ are indexed by the elements of $\Gamma$. In particular, if $\Gamma$ is finite, then $\mathcal G[\Gamma]\subseteq\mathcal F(\lie a)$.

For $V\in\mathcal G$ and $\Gamma\subseteq\Lambda$, define
\begin{equation*}
V_\Gamma^+ = \{v\in V[\gb r]_\lambda: (\lambda,\gb r)\in\Gamma, \lie n^+v=0\}, \qquad V_\Gamma=U(\lie g)V_\Gamma^+, \qquad\text{and}\qquad V^\Gamma=V/V_{\Lambda\setminus\Gamma}.
\end{equation*}
It is easy to see that $V_\Gamma$ and $V^\Gamma$ are $\lie g$-modules which can be naturally regarded as objects in the category $\cel V$. Moreover,
$$\Hom_\lie g(V_\Gamma[\gb r],V(\lambda))\ne 0 \qquad\text{only if}\qquad  (\lambda,\gb r)\in\Gamma$$
and similarly for $V^\Gamma$.
Also, note that if $f\in \Hom_{\cal G}(V,W)$, then $f(V_\Gamma^+)\subset W_\Gamma^+$ and, hence, the restriction $f_\Gamma$ of $f$ to $V_\Gamma$ is an element of $\Hom_\lie g(V_\Gamma, W_\Gamma)$. Moreover, since $f(V_{\Lambda\setminus\Gamma})\subset W_{\Lambda\setminus\Gamma}$, we have a homomorphism of $\lie g$-modules $f^\Gamma: V^\Gamma \to W^\Gamma$. In general, it is not true that either $V_\Gamma$ or $V^\Gamma$ are $\lie a$-modules, but we shall see that this is true under certain conditions (see Lemma \ref{l:inGGamma} below). In that case, $f_\Gamma$ and $f^\Gamma$ are clearly morphisms in ${\cal G}[\Gamma]$.

The next proposition is a more general version of parts of \cite[Proposition 3.4]{ckr:faces}.

\begin{prop}\label{vGamma} Let $V \in \cal G$ be such that $V^\Gamma \in {\cal G}[\Gamma]$.

\begin{enumerate}[(a)]
\item \label{vGammaa} For all $(\nu, \gb t) \in \Gamma$, $[V^{\Gamma} : V(\nu,\gb t)] = [V : V(\nu,\gb t)]$.
\item \label{vGammab} If $V$ is projective in $\cal G$, then $V^{\Gamma}$ is projective in ${\cal G}[\Gamma]$.
\item \label{vGammac} If $W \in {\cal G}[\Gamma]$, then $\Hom_{\cal G}(V, W) \cong \Hom_{{\cal G}[\Gamma]}(V^{\Gamma}, W).$
\end{enumerate}
\end{prop}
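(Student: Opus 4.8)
\textbf{Proof proposal for Proposition \ref{vGamma}.}

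The plan is to exploit the universal/projective properties established in Proposition \ref{p:proj} together with the explicit description of $V^\Gamma$ as a quotient of $V$ by the $\lie g$-submodule $V_{\Lambda\setminus\Gamma}$ generated by all highest-weight vectors whose ``type'' $(\lambda,\gb r)$ lies outside $\Gamma$. First I would prove part \eqref{vGammaa}. By Proposition \ref{p:proj}\,(d) we have $[V:V(\nu,\gb t)] = \dim\Hom_\lie g(V(\nu),V[\gb t])$, and the analogous equality holds for $V^\Gamma$ once we know it is an object of $\cal G$ (here it is given that $V^\Gamma\in\cal G[\Gamma]$). Since $V^\Gamma = V/V_{\Lambda\setminus\Gamma}$ and, by construction, $\Hom_\lie g\big((V_{\Lambda\setminus\Gamma})[\gb t],V(\nu)\big)\ne 0$ only if $(\nu,\gb t)\in\Lambda\setminus\Gamma$, the graded piece $(V_{\Lambda\setminus\Gamma})[\gb t]$ contains no copy of $V(\nu)$ when $(\nu,\gb t)\in\Gamma$; hence the projection $V[\gb t]\to V^\Gamma[\gb t]$ is an isomorphism on the $V(\nu)$-isotypical component, giving $[V^\Gamma:V(\nu,\gb t)] = [V:V(\nu,\gb t)]$ for $(\nu,\gb t)\in\Gamma$. (One should note in passing that for this to even make sense we must know $V^\Gamma$ is naturally an $\lie a$-module; this will hold under the convexity hypotheses and is addressed by Lemma \ref{l:inGGamma}, so for the purposes of this proposition I take $V^\Gamma\in\cal G[\Gamma]$ as the standing assumption.)

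Next I would establish \eqref{vGammac}, the adjunction-type isomorphism, since \eqref{vGammab} will follow formally from it. Given $W\in\cal G[\Gamma]$ and $f\in\Hom_\cal G(V,W)$, the key observation is that $f$ must kill $V_{\Lambda\setminus\Gamma}$: by the discussion preceding the proposition, $f$ sends $V_\Gamma^+$ into $W_\Gamma^+$ and, more to the point, $f\big((V_{\Lambda\setminus\Gamma})^+\big)\subseteq W$ lands in the span of highest-weight vectors of type in $\Lambda\setminus\Gamma$; but since $W\in\cal G[\Gamma]$, $W$ contains no such nonzero vectors, so $f\big((V_{\Lambda\setminus\Gamma})^+\big)=0$ and hence $f(V_{\Lambda\setminus\Gamma})=f(U(\lie g)(V_{\Lambda\setminus\Gamma})^+)=0$. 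Therefore $f$ factors uniquely through the quotient map $V\to V^\Gamma$, yielding a well-defined map $\Hom_\cal G(V,W)\to\Hom_{\cal G[\Gamma]}(V^\Gamma,W)$. Conversely, composing any $g\in\Hom_{\cal G[\Gamma]}(V^\Gamma,W)$ with the canonical surjection $V\to V^\Gamma$ gives an element of $\Hom_\cal G(V,W)$; these two assignments are mutually inverse, and both are evidently $\mathbb C$-linear, so they furnish the desired isomorphism.

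Finally, for \eqref{vGammab}, suppose $V$ is projective in $\cal G$. To check $V^\Gamma$ is projective in $\cal G[\Gamma]$, take a surjection $W_1\twoheadrightarrow W_2$ in $\cal G[\Gamma]$ and a map $V^\Gamma\to W_2$; precompose with $V\to V^\Gamma$ to get $V\to W_2$, lift it through $W_1\twoheadrightarrow W_2$ using projectivity of $V$ in $\cal G$ (the surjection is still a surjection in $\cal G$), and then invoke \eqref{vGammac} to see that the lift $V\to W_1$ factors through $V^\Gamma\to W_1$; one checks the resulting triangle commutes because the original $V\to W_2$ already factored through $V^\Gamma$ and all maps in sight are compatible with the quotient $V\to V^\Gamma$. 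Hence $V^\Gamma$ has the lifting property in $\cal G[\Gamma]$.

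I expect the only genuine subtlety — as opposed to routine diagram-chasing — to be the step identifying $f(V_{\Lambda\setminus\Gamma})=0$ cleanly: one needs the characterization of $V_{\Lambda\setminus\Gamma}$ as $U(\lie g)$ applied to the highest-weight vectors of the ``wrong'' type, together with the fact that morphisms in $\cal G$ are graded and $\lie g$-equivariant, hence send highest-weight vectors to (sums of) highest-weight vectors of the same type or to zero, so that their image in $W\in\cal G[\Gamma]$ must vanish. Everything else (linearity, naturality, the projectivity lift) is formal. The care needed is essentially the same as in \cite[Proposition 3.4]{ckr:faces}, now carried out in the $\mathbb Z^\ell$-graded setting, where no new phenomena arise.
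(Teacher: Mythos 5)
Your proposal is correct and follows essentially the same route as the paper: part (a) via semisimplicity and the vanishing of $\Hom_{\lie g}(V(\nu),V_{\Lambda\setminus\Gamma}[\gb t])$ for $(\nu,\gb t)\in\Gamma$, and parts (b), (c) via the key observation that any morphism into $W\in\mathcal G[\Gamma]$ kills $V_{\Lambda\setminus\Gamma}$ and hence factors uniquely through $\pi_V:V\to V^\Gamma$. Your only deviation is ordering (proving (c) first and deducing (b) from it), which is an immaterial rearrangement of the same argument.
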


\begin{proof}
For \eqref{vGammaa}, given $\gb s\in \mathbb Z^\ell$, consider the short exact sequence of graded $\lie g$-modules
$$0 \to V_{\Lambda \setminus \Gamma}[\gb s] \to V[\gb s] \to V^{\Gamma}[\gb s] \to 0.$$
Since each finite-dimensional $\lie g$-module is semisimple, we have
$$\dim \Hom_{\lie g}(V(\mu), V[\gb s]) = \dim \Hom_{\lie g}(V(\mu), V^{\Gamma}[\gb s]) + \dim \Hom_{\lie g}(V(\mu), V_{\Lambda \setminus \Gamma}[\gb s]).$$
Since, by definition of $V_{\Lambda \setminus \Gamma}$, we also have $\Hom_{\lie g}(V(\mu), V_{\Lambda \setminus \Gamma}[\gb s]) = 0$ for all $(\mu, \gb s) \in \Gamma$, part (a)  follows.

For proving \eqref{vGammab}, let $U,W \in {\cal G}[\Gamma]$ and consider a homomorphism $g: V^\Gamma \to U$ and an epimorphism $h: W\to U$. Since $V$ is projective, there exists a homomorphism $f : V \to W$ in $\cal G$ such that $hf = g\pi_{V}$, where $\pi_{V} : V \to V^{\Gamma}$ is the canonical projection. Furthermore, since $W \in {\cal G}[\Gamma]$, we have $f(V_{\Lambda \setminus \Gamma}) = 0$ which implies that there exists a unique homomorphism $\overline{f} : V^{\Gamma} \to W$ in ${\cal G}[\Gamma]$ such that $\overline{f}\pi_{V} = f$. Therefore, $h \overline{f} \pi_{V} = h f =  g \pi_{V}$ and the surjectivity of $\pi_{V}$  implies that $h \overline{f} = g$ as desired.

Finally,  given $f \in \Hom_{{\cal G}}(V, W)$, by hypothesis on $W$ we have $f(V_{\Lambda \setminus \Gamma}) = 0$ and, hence, there exists a unique homomorphism $\psi_f : V^{\Gamma} \to W$ in ${\cal G}[\Gamma]$ such that $\psi_f\pi_{V} = f$. Now, it is routine to show that
$$\psi : \Hom_{\cal G}(V, W) \to \Hom_{{\cal G}(\Gamma)}(V^{\Gamma}, W) \qquad \text{ given by} \qquad f \mapsto \psi_f$$
defines a linear isomorphism.
\end{proof}

A subset $\Gamma$ of $\Lambda$ is said to be convex  with respect to $\preccurlyeq$ if
$$(\lambda,\gb r) \preccurlyeq (\nu,\gb p) \preccurlyeq (\mu,\gb s) \qquad \text{ and }\qquad  (\lambda,\gb r), (\mu,\gb s) \in \Gamma \implies (\nu,\gb p)\in \Gamma.$$
We note that a convex set was called ``an interval closed set'' in \cite{ckr:faces}.
For $\ell=1$, the next two results were proved in \cite[Propositions 3.2 -- 3.4]{ckr:faces} and the proofs for arbitrary $\ell$ are analogous (all details can be found in \cite{bia:thesis}).

\begin{lem}\label{l:inGGamma}
Let $\Gamma\subseteq\Lambda$ and $V \in \mathcal G$.
\begin{enumerate}[(a)]
\item Given $(\lambda,\gb r), (\mu,\gb s)\in\Lambda$, $\Ext^1_{\mathcal G}(V(\lambda,\gb r),V(\mu,\gb s)) \ne0$ only if $(\mu,\gb s)$ covers $(\lambda,\gb r)$.
\item  If $V_\Gamma$ is not an $\lie a$-submodule of $V$, then there exist $(\mu,\gb s)\in\Lambda(V)\setminus\Gamma$ and $(\lambda,\gb r)\in\Lambda(V)\cap\Gamma$ such that $(\mu,\gb s)$ covers $(\lambda,\gb r)$.
\item  Suppose that $\Gamma$ is finite and convex with respect to $\preccurlyeq$.
\begin{enumerate}
    \item Assume that for any $(\lambda,\gb r)\in \Lambda(V)\setminus \Gamma$ there exists $(\mu,\gb s)\in \Gamma$ with $(\lambda,\gb r) \preccurlyeq (\mu,\gb s)$. Then, $V_\Gamma\in \mathcal G[\Gamma]$. Furthermore if $U$ is an $\lie a$-submodule of $V$, then $U_\Gamma, (V/U)_\Gamma \in \mathcal G[\Gamma]$ and $(V/U)_\Gamma \cong V_\Gamma/U_\Gamma$.
    \item Assume that for any $(\lambda,\gb r)\in \Lambda(V)\setminus \Gamma$ there exists $(\mu,\gb s)\in \Gamma$ with $(\mu,\gb s) \preccurlyeq (\lambda,\gb r)$. Then, $V^\Gamma\in \mathcal G[\Gamma]$. Furthermore, if $U$ is an $\lie a$-submodule of $V$, then $U^\Gamma, (V/U)^\Gamma \in \mathcal G[\Gamma]$ and $(V/U)^\Gamma \cong V^\Gamma/U^\Gamma$.\hfill\qedsymbol
\end{enumerate}
\end{enumerate}
\end{lem}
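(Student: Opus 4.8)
The plan is to derive all four statements formally from the results already established in this section --- chiefly Proposition \ref{p:proj} --- together with the semisimplicity of finite-dimensional $\lie g$-modules; the passage from $\ell=1$ to general $\ell$ introduces nothing new, so I would follow the pattern of \cite[Propositions 3.2--3.4]{ckr:faces}. The bookkeeping device I would set up first is the isotypic decomposition: for $V\in\mathcal G$ and $\gb t\in\mathbb Z^\ell$, write $V[\gb t]=\bigoplus_\nu V[\gb t]^{(\nu)}$ as a $\lie g$-module; then, directly from the definitions, $V_\Gamma[\gb t]=\bigoplus_{(\nu,\gb t)\in\Gamma}V[\gb t]^{(\nu)}$, $V_{\Lambda\setminus\Gamma}[\gb t]=\bigoplus_{(\nu,\gb t)\notin\Gamma}V[\gb t]^{(\nu)}$, and hence $V^\Gamma[\gb t]\cong\bigoplus_{(\nu,\gb t)\in\Gamma}V[\gb t]^{(\nu)}$. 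In particular, as graded $\lie g$-modules $V_\Gamma$ and $V^\Gamma$ already meet the support condition defining $\mathcal G[\Gamma]$, and $V_\Gamma$ is automatically $\lie g$-stable; so the only thing that can obstruct the conclusions ``$\in\mathcal G[\Gamma]$'' is the failure of $V_\Gamma$ (resp. of $V_{\Lambda\setminus\Gamma}$) to be an $\lie a$-submodule of $V$.

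For (a), I would apply $\Hom_{\mathcal G}(-,V(\mu,\gb s))$ to the short exact sequence $0\to K(\lambda,\gb r)\to P(\lambda,\gb r)\to V(\lambda,\gb r)\to 0$. Since $P(\lambda,\gb r)$ is projective in $\mathcal G$ by Proposition \ref{p:proj}(a), the resulting long exact sequence realizes $\Ext^1_{\mathcal G}(V(\lambda,\gb r),V(\mu,\gb s))$ as a quotient of $\Hom_{\mathcal G}(K(\lambda,\gb r),V(\mu,\gb s))$. A nonzero morphism from $K(\lambda,\gb r)$ to the simple module $V(\mu,\gb s)$ is surjective, so its degree-$\gb s$ part surjects $K(\lambda,\gb r)[\gb s]$ onto $V(\mu)$; thus $[K(\lambda,\gb r):V(\mu,\gb s)]\ne 0$, and Proposition \ref{p:proj}(c) then forces $(\mu,\gb s)$ to cover $(\lambda,\gb r)$.

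For (b), the idea is to localize the failure of $V_\Gamma$ to be a submodule. As $V_\Gamma=U(\lie g)V_\Gamma^+$ is $U(\lie g)$-stable and $\lie a_+$ is a $\lie g$-submodule of $\lie a$, one has $\lie a_+V_\Gamma\subseteq U(\lie g)\,\lie a_+V_\Gamma^+$, so $V_\Gamma$ is an $\lie a$-submodule precisely when $\lie a_+V_\Gamma^+\subseteq V_\Gamma$. Hence, if $V_\Gamma$ is not a submodule, there exist a homogeneous $a\in\lie a_+$ and $v\in V_\Gamma^+$, of weight $\lambda$ and degree $\gb r$ with $(\lambda,\gb r)\in\Gamma\cap\Lambda(V)$, such that $av\notin V_\Gamma$. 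Since $U(\lie g)v\cong V(\lambda)$, the vector $v$ satisfies the defining relations of Proposition \ref{p:proj}\eqref{p:projrel}, so there is an $\lie a$-module surjection $P(\lambda,\gb r)\twoheadrightarrow U(\lie a)v\subseteq V$. The submodule $U(\lie a)v$ is not contained in $V_\Gamma$ yet is generated over $\lie g$ by highest-weight vectors of $V$, so one of those vectors, of weight $\mu$ and degree $\gb s$, has $(\mu,\gb s)\notin\Gamma$; then $(\mu,\gb s)\in\Lambda(V)$, $(\mu,\gb s)\ne(\lambda,\gb r)$, and $[P(\lambda,\gb r):V(\mu,\gb s)]\ge[U(\lie a)v:V(\mu,\gb s)]>0$. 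This forces $[K(\lambda,\gb r):V(\mu,\gb s)]\ne 0$, whence Proposition \ref{p:proj}(c) gives that $(\mu,\gb s)$ covers $(\lambda,\gb r)$, as required.

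Finally, (c) is the contrapositive of (b) combined with convexity, followed by routine functoriality. In case (i): if $V_\Gamma$ were not a submodule, (b) would supply $(\lambda,\gb r)\in\Gamma\cap\Lambda(V)$ and $(\mu,\gb s)\in\Lambda(V)\setminus\Gamma$ with $(\lambda,\gb r)\preccurlyeq(\mu,\gb s)$; the hypothesis then gives $(\mu',\gb s')\in\Gamma$ with $(\mu,\gb s)\preccurlyeq(\mu',\gb s')$, and convexity of $\Gamma$ places $(\mu,\gb s)$ in $\Gamma$, a contradiction --- so $V_\Gamma$ is a submodule and, by the first paragraph, lies in $\mathcal G[\Gamma]$. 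Case (ii) is the same argument applied to $\Lambda\setminus\Gamma$, using that the covering pair from (b) now has its larger member in $\Gamma$ and its smaller member outside, so the hypothesis again sandwiches the smaller one between two elements of $\Gamma$. For the ``furthermore'' clauses I would observe that $\Lambda(U)$ and $\Lambda(V/U)$ are contained in $\Lambda(V)$ (multiplicities of simples only drop on passing to submodules or quotients, by semisimplicity of the graded pieces), so $U$ and $V/U$ inherit the hypothesis and the first part applies to them; the isomorphisms $(V/U)_\Gamma\cong V_\Gamma/U_\Gamma$ and $(V/U)^\Gamma\cong V^\Gamma/U^\Gamma$ then follow by restricting and corestricting the quotient map $q\colon V\to V/U$ and checking, via the isotypic description, that $q$ maps $V_\Gamma$ onto $(V/U)_\Gamma$ with kernel $U\cap V_\Gamma=U_\Gamma$ (respectively maps $V_{\Lambda\setminus\Gamma}$ onto $(V/U)_{\Lambda\setminus\Gamma}$ with $U\cap V_{\Lambda\setminus\Gamma}=U_{\Lambda\setminus\Gamma}$). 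I do not expect a genuine obstacle here; the one step needing care is the localization in (b) --- pinning down that $V_\Gamma$ fails to be $\lie a$-stable already for a single element of $\lie a_+$ acting on a single highest-weight vector, and recognizing the cyclic submodule it generates as a quotient of $P(\lambda,\gb r)$ so that Proposition \ref{p:proj}(c) applies.
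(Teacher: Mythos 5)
Your proposal is correct and takes essentially the route the paper intends: the paper omits the proof, referring to the $\ell=1$ arguments of \cite[Propositions 3.2--3.4]{ckr:faces}, and your argument is precisely that argument carried over to general $\ell$, resting on Proposition \ref{p:proj} (projectivity of $P(\lambda,\gb r)$ and the constraint on the constituents of $K(\lambda,\gb r)$) for (a) and (b), and on contraposition of (b) together with convexity, plus the isotypic description of $V_\Gamma$ and $V_{\Lambda\setminus\Gamma}$, for (c). The one step needing care --- reducing the failure of $\lie a$-stability of $V_\Gamma$ to a single highest-weight vector and comparing the cyclic module it generates with a quotient of $P(\lambda,\gb r)$ --- is handled correctly.
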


\begin{prop}
Suppose $\Gamma\subseteq\Lambda$ is finite and convex and let $(\lambda,\gb r),(\mu,\gb s)\in\Gamma$.
\begin{enumerate}[(a)]
\item $P_j(\lambda,\gb r)^\Gamma\in\mathcal G[\Gamma]$ for all $j\in\mathbb Z_+$.
\item $P(\lambda,\gb r)^\Gamma$ is the projective cover of $V(\lambda,\gb r)$ in $\mathcal G[\Gamma]$.
\item $[P(\lambda,\gb r):V(\mu,\gb s)]=[P(\lambda,\gb r)^\Gamma:V(\mu,\gb s)]=\dim \Hom_{\mathcal G[\Gamma]}(P(\mu,\gb s)^\Gamma,P(\lambda,\gb r)^\Gamma)$.
\item $\Hom_{\mathcal G}(P(\mu,\gb s),P(\lambda,\gb r))\cong\Hom_{\mathcal G[\Gamma]}(P(\mu,\gb s)^\Gamma,P(\lambda,\gb r)^\Gamma)$.
\item The induced sequence
$$\cdots{\longrightarrow} P_2(\lambda,\gb r)^\Gamma\stackrel{d_2^\Gamma}{\longrightarrow} P_1(\lambda,\gb r)^\Gamma \stackrel{d_1^\Gamma}{\longrightarrow} P(\lambda,\gb r)^\Gamma\stackrel{d_0^\Gamma}{\longrightarrow} V(\lambda,\gb r)\to 0$$
is a finite projective resolution of $V(\lambda,\gb r)$ in $\mathcal G[\Gamma]$.\hfill\qedsymbol
\end{enumerate}
\end{prop}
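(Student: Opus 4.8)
The plan is to route all five parts through Lemma~\ref{l:inGGamma} and Proposition~\ref{vGamma}; the one new ingredient needed is a uniform estimate on $\Lambda(P_j(\lambda,\gb r))$. The first thing to record is that for every $j\ge 0$ one has $\Lambda(P_j(\lambda,\gb r))\subseteq\{(\nu,\gb t):(\lambda,\gb r)\preccurlyeq(\nu,\gb t)\}$: for $j>0$ this is Proposition~\ref{p:proj}(e), while for $j=0$ a composition factor $V(\nu,\gb t)$ of $P(\lambda,\gb r)$ is either $V(\lambda,\gb r)$ itself or a factor of the kernel $K(\lambda,\gb r)$, which covers $(\lambda,\gb r)$ by Proposition~\ref{p:proj}(c). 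Since $(\lambda,\gb r)\in\Gamma$, this says exactly that each $P_j(\lambda,\gb r)$ — and hence each of its graded $\lie a$-submodules, whose $\Lambda$-set is contained in $\Lambda(P_j(\lambda,\gb r))$ — satisfies the hypothesis of Lemma~\ref{l:inGGamma}(c)(ii). Part (a) is then immediate from that lemma.

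For (b): $P(\lambda,\gb r)$ is projective in $\mathcal G$ by Proposition~\ref{p:proj}(a), so $P(\lambda,\gb r)^\Gamma$ is projective in $\mathcal G[\Gamma]$ by part (a) and Proposition~\ref{vGamma}(b). The map $d_0$ vanishes on $P(\lambda,\gb r)_{\Lambda\setminus\Gamma}$ (being nonzero only on the $V(\lambda)$-isotypic component in degree $\gb r$, and $(\lambda,\gb r)\in\Gamma$), so it induces a surjection $P(\lambda,\gb r)^\Gamma\twoheadrightarrow V(\lambda,\gb r)$. By Proposition~\ref{p:proj}(f), $P(\lambda,\gb r)$ is the projective cover of $V(\lambda,\gb r)$ in $\mathcal G$, hence has simple head $V(\lambda,\gb r)$; as $[P(\lambda,\gb r)_{\Lambda\setminus\Gamma}:V(\lambda,\gb r)]=0$, the submodule $P(\lambda,\gb r)_{\Lambda\setminus\Gamma}$ lies in the radical, so the cyclic quotient $P(\lambda,\gb r)^\Gamma$ again has simple head $V(\lambda,\gb r)$. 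A projective object of $\mathcal G[\Gamma]$ with simple head $V(\lambda,\gb r)$ is the projective cover of $V(\lambda,\gb r)$ there (recall $\mathcal G[\Gamma]\subseteq\mathcal F(\lie a)$).

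Part (c): the first equality is Proposition~\ref{vGamma}(a) with $V=P(\lambda,\gb r)$. For the second, Proposition~\ref{vGamma}(c) applied to $W=P(\lambda,\gb r)^\Gamma\in\mathcal G[\Gamma]$ (by (a)), together with $\Hom_{\mathcal G}(P(\mu,\gb s),-)\cong\Hom_{\lie g}(V(\mu),(-)[\gb s])$ from Proposition~\ref{p:proj}(d), gives $\dim\Hom_{\mathcal G[\Gamma]}(P(\mu,\gb s)^\Gamma,P(\lambda,\gb r)^\Gamma)=\dim\Hom_{\lie g}(V(\mu),P(\lambda,\gb r)^\Gamma[\gb s])=[P(\lambda,\gb r)^\Gamma:V(\mu,\gb s)]$. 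For (d): the functor $(-)^\Gamma$ sends $f\in\Hom_{\mathcal G}(P(\mu,\gb s),P(\lambda,\gb r))$ to $f^\Gamma\in\Hom_{\mathcal G[\Gamma]}(P(\mu,\gb s)^\Gamma,P(\lambda,\gb r)^\Gamma)$, and this assignment is injective because the cyclic generator of $P(\mu,\gb s)$ has weight $(\mu,\gb s)\in\Gamma$, so its image under $f$ lies in $P(\lambda,\gb r)_{\Lambda\setminus\Gamma}$ only if it is $0$; it is then bijective by comparing dimensions via (c) and the equality $\dim\Hom_{\mathcal G}(P(\mu,\gb s),P(\lambda,\gb r))=[P(\lambda,\gb r):V(\mu,\gb s)]$ of Proposition~\ref{p:proj}(d).

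The substantive part is (e). Denote by $P_\bullet\to V(\lambda,\gb r)$ the projective resolution constructed above, with $P_j=P_j(\lambda,\gb r)$ and differentials $d_j$, put $Z_j=\ker d_j\subseteq P_j$ and $Z_{-1}=V(\lambda,\gb r)$, and note that $d_j$ factors as $P_j\twoheadrightarrow Z_{j-1}\hookrightarrow P_{j-1}$ and that $0\to Z_j\to P_j\to Z_{j-1}\to 0$ is exact. Applying $(-)^\Gamma$ and invoking Lemma~\ref{l:inGGamma}(c)(ii) — which applies since each $P_j$, hence each $Z_j$, satisfies the hypothesis recorded in the first paragraph — produces short exact sequences $0\to Z_j^\Gamma\to P_j^\Gamma\to Z_{j-1}^\Gamma\to 0$ in $\mathcal G[\Gamma]$, with $Z_{-1}^\Gamma=V(\lambda,\gb r)$. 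By functoriality $d_j^\Gamma$ is the composite $P_j^\Gamma\twoheadrightarrow Z_{j-1}^\Gamma\hookrightarrow P_{j-1}^\Gamma$, so $\operatorname{im}d_{j+1}^\Gamma=Z_j^\Gamma=\ker d_j^\Gamma$; thus the induced complex is exact, and it is a projective resolution since the $P_j^\Gamma$ are projective in $\mathcal G[\Gamma]$ by part (a), Proposition~\ref{p:proj}(a), and Proposition~\ref{vGamma}(b). Finiteness is forced by the grading: $P_j(\lambda,\gb r)[\gb s]=0$ whenever $\deg(\gb s)<j+\deg(\gb r)$, so once $j>\max\{\deg(\gb t):(\nu,\gb t)\in\Gamma\}-\deg(\gb r)$ the module $P_j(\lambda,\gb r)$ vanishes in every degree occurring in $\Gamma$, whence $[P_j(\lambda,\gb r)^\Gamma:V(\nu,\gb t)]=[P_j(\lambda,\gb r):V(\nu,\gb t)]=0$ for all $(\nu,\gb t)\in\Gamma$ by Proposition~\ref{vGamma}(a), and since $P_j(\lambda,\gb r)^\Gamma\in\mathcal G[\Gamma]$ this gives $P_j(\lambda,\gb r)^\Gamma=0$. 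The expected main obstacle is precisely this point that $(-)^\Gamma$ is not exact on $\mathcal G$ in general: it is the estimate $\Lambda(P_j(\lambda,\gb r))\subseteq\{(\nu,\gb t):(\lambda,\gb r)\preccurlyeq(\nu,\gb t)\}$ with $(\lambda,\gb r)\in\Gamma$ that makes Lemma~\ref{l:inGGamma}(c)(ii) applicable and so keeps the sequence exact after applying $(-)^\Gamma$.
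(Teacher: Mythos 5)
Your proof is correct and proceeds along essentially the lines the paper intends: the paper itself omits the argument, referring to the $\ell=1$ case \cite[Propositions 3.2--3.4]{ckr:faces} and to \cite{bia:thesis}, and what you write reconstructs exactly that argument from the paper's own toolkit — the containment $\Lambda(P_j(\lambda,\gb r))\subseteq\{(\nu,\gb t):(\lambda,\gb r)\preccurlyeq(\nu,\gb t)\}$ coming from Proposition \ref{p:proj}(c),(e), which makes Lemma \ref{l:inGGamma}(c)(ii) applicable, combined with Proposition \ref{vGamma} and the degree bound $P_j(\lambda,\gb r)[\gb s]=0$ for $\deg(\gb s)<j+\deg(\gb r)$ to get finiteness. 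No gaps worth flagging.
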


\section{A Generalization of Restricted KR modules}

The main goal of this section is to compute the graded character of the modules $P(\lambda,\gb r)^\Gamma$ under certain conditions on $\lie a$ and $\Gamma$. To achieve this, we compute the space of extensions between any two simple objects in $\mathcal G$. Finally, we give a presentation of $P(\lambda,\gb r)^\Gamma$ in terms of generators and relations which allows us to regard these modules as generalizations of restricted Kirillov-Reshetikhin modules (in the sense of \cite{cm:kr,cm:krg}).

\subsection{Extensions}

For the next few subsections we assume $\lie a[\gb r]=0$ if  $\deg(\gb r)>1$ and set $V_j=\lie a[\gb e_j]$ for $j=1,\dots,\ell$, and  $V=\opl_{j=1}^\ell V_j$. Notice that $\lie a \cong \lie g\ltimes V$. In this case, the statements of Propositions \ref{p:gpUa} and \ref{p:wedgea+} simplify and we get

\begin{equation}
U(\lie a_+)[\gb r]\cong_\lie g {\rm Sym}^{r_1}V_1\otimes\cdots\otimes {\rm Sym}^{r_\ell}V_\ell
\end{equation}
and
\begin{equation}\label{e:wedgedeg1}
(\wedge^j\lie a_+)[\gb r]\cong_\lie g
\begin{cases}
\wedge^{r_1}V_1\otimes\cdots\otimes \wedge^{r_\ell}V_\ell,& \text{if } \deg(\gb r)=j,\\
0,& \text{otherwise.}
\end{cases}
\end{equation}

For $\ell=1$, the next theorem was proved in \cite[Propositions 6.1 and 6.2]{ckr:faces} and the proof for arbitrary $\ell$ is analogous
(all the details can be found in \cite{bia:thesis}).

\begin{thm}\label{t:exts}
Let $(\lambda,\gb r), (\mu,\gb s)\in\Lambda, r=\deg(\gb r), s=\deg(\gb s)$, and $j\ge 0$. Then,
\begin{equation*}
\Ext^j_{\mathcal G}(V(\lambda,\gb r), V(\mu,\gb s)) \cong
\begin{cases}
\Hom_\lie g((\wedge^j\lie a_+)[\gb s-\gb r]\otimes V(\lambda),V(\mu)), &\text{ if } j=s-r,\\
0, &\text{ otherwise.}
\end{cases}
\end{equation*}
Moreover, if $\Gamma$ is finite and convex and $(\lambda,\gb r), (\mu,\gb s)\in\Gamma$, then
$$\Ext^j_{\mathcal G[\Gamma]}(V(\lambda,\gb r), V(\mu,\gb s))\cong \Ext^j_{\mathcal G}(V(\lambda,\gb r), V(\mu,\gb s)).$$\hfill\qedsymbol
\end{thm}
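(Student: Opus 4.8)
The plan is to compute the $\Ext$ groups using the projective resolution constructed in Section 1 and then, for the comparison with $\mathcal G[\Gamma]$, invoke the results of Proposition \ref{vGamma} together with Lemma \ref{l:inGGamma}.

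First I would apply the contravariant functor $\Hom_{\mathcal G}(-,V(\mu,\gb s))$ to the projective resolution
$\cdots\to P_1(\lambda,\gb r)\xrightarrow{d_1} P(\lambda,\gb r)\xrightarrow{d_0} V(\lambda,\gb r)\to 0$
and take cohomology. By Proposition \ref{p:proj}(d) we have $\Hom_{\mathcal G}(P_j(\lambda,\gb r),V(\mu,\gb s))\cong\Hom_{\lie g}((\wedge^j\lie a_+)[\gb s-\gb r]\otimes V(\lambda),V(\mu))$, since $P_j(\lambda,\gb r)=\mathscr P_j(V(\lambda,\gb r))=\mathscr P((\wedge^j\lie a_+)\otimes V(\lambda,\gb r))$ and the $\gb s$-graded piece of $V(\mu,\gb s)$ is all that matters. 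Now the key simplifying input is \eqref{e:wedgedeg1}: under the hypothesis $\lie a[\gb r]=0$ for $\deg(\gb r)>1$, the space $(\wedge^j\lie a_+)[\gb t]$ vanishes unless $\deg(\gb t)=j$. Hence $\Hom_{\mathcal G}(P_j(\lambda,\gb r),V(\mu,\gb s))=0$ unless $j=\deg(\gb s-\gb r)=s-r$, so the cochain complex is concentrated in the single degree $j=s-r$; in that degree the differentials into and out of it land in zero groups, so the cohomology is just $\Hom_{\lie g}((\wedge^j\lie a_+)[\gb s-\gb r]\otimes V(\lambda),V(\mu))$ itself. This gives the first displayed formula. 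One should check the edge cases $j=0$ (where the formula reduces to $\Hom_{\mathcal G}(V(\lambda,\gb r),V(\mu,\gb s))=\Hom_{\lie g}(V(\lambda),V(\mu))\delta_{\gb r,\gb s}$, consistent since $(\wedge^0\lie a_+)=\mathbb C$ sits in degree $\gb 0$) and $s<r$ (all terms vanish, as they should).

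For the second statement, the comparison $\Ext^j_{\mathcal G[\Gamma]}(V(\lambda,\gb r),V(\mu,\gb s))\cong\Ext^j_{\mathcal G}(V(\lambda,\gb r),V(\mu,\gb s))$, I would use the finite projective resolution $P_\bullet(\lambda,\gb r)^\Gamma\to V(\lambda,\gb r)$ in $\mathcal G[\Gamma]$ provided by the last Proposition of Section 1 (part (e)), and compute $\Ext^j_{\mathcal G[\Gamma]}$ as the cohomology of $\Hom_{\mathcal G[\Gamma]}(P_\bullet(\lambda,\gb r)^\Gamma,V(\mu,\gb s))$. By Proposition \ref{vGamma}(c), applied with $V=P_j(\lambda,\gb r)$ and $W=V(\mu,\gb s)\in\mathcal G[\Gamma]$ (valid because $P_j(\lambda,\gb r)^\Gamma\in\mathcal G[\Gamma]$ by the cited Proposition, and $P_j(\lambda,\gb r)$ has the needed property $P_j(\lambda,\gb r)^\Gamma\in\mathcal G[\Gamma]$), we get $\Hom_{\mathcal G[\Gamma]}(P_j(\lambda,\gb r)^\Gamma,V(\mu,\gb s))\cong\Hom_{\mathcal G}(P_j(\lambda,\gb r),V(\mu,\gb s))$, compatibly with the differentials $d_j^\Gamma$ and $d_j$. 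Thus the two cochain complexes are isomorphic and so are their cohomologies. A small point to verify is that the convexity of $\Gamma$ is exactly what guarantees (via Lemma \ref{l:inGGamma}(c), using Proposition \ref{p:proj}(e) to see that $P_j(\lambda,\gb r)$ has composition factors $\succcurlyeq(\lambda,\gb r)$ for $j>0$) that the hypothesis "$V^\Gamma\in\mathcal G[\Gamma]$" of Proposition \ref{vGamma} holds for each $P_j(\lambda,\gb r)$, and hence that $d_\bullet^\Gamma$ really is a resolution in $\mathcal G[\Gamma]$.

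The main obstacle, and the place I would be most careful, is establishing that applying $(-)^\Gamma$ to the resolution $P_\bullet(\lambda,\gb r)$ yields a resolution in $\mathcal G[\Gamma]$ — i.e., that exactness is preserved — rather than just a complex; this is precisely the content bundled into the cited propositions, whose proofs rest on Lemma \ref{l:inGGamma}(c) and the convexity hypothesis, so I would want to make sure all the hypotheses of that lemma are genuinely satisfied for the modules $P_j(\lambda,\gb r)$ (in particular that every element of $\Lambda(P_j(\lambda,\gb r))\setminus\Gamma$ lies $\succcurlyeq$ some element of $\Gamma$, which follows from Proposition \ref{p:proj}(c),(e) since all composition factors cover or equal $(\lambda,\gb r)\in\Gamma$ and $\Gamma$ is convex). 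Once that is in hand, everything else is formal homological algebra. The degree-vanishing argument via \eqref{e:wedgedeg1} is the genuinely new ingredient making the Ext computation clean in the multigraded setting, and it requires no extra work beyond Propositions \ref{p:gpUa}--\ref{p:wedgea+} specialized as in \eqref{e:wedgedeg1}.
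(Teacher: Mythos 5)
Your proposal is correct and follows essentially the same route the paper takes (via the citation of \cite[Propositions 6.1 and 6.2]{ckr:faces} and the remark following the theorem): apply $\Hom_{\mathcal G}(-,V(\mu,\gb s))$ to the projective resolution $P_\bullet(\lambda,\gb r)$, use Frobenius reciprocity together with the degree concentration \eqref{e:wedgedeg1} to see the complex is concentrated in degree $j=s-r$, and deduce the $\mathcal G[\Gamma]$ comparison from the induced finite resolution and Proposition \ref{vGamma}\eqref{vGammac}. No gaps.
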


\begin{rem}
For the proof of the last statement of the above theorem in the case $\ell=1$ \cite[Proposition 4.2]{ckr:faces}, the authors of \cite{ckr:faces} refer to the proof of \cite[Proposition 3.3]{cg:koszul} in order to show that the canonical map
$$\Hom_{\cal G}(P_j(\lambda,\gb r), V(\mu,\gb s)) \to \Hom_{{\cal G}[\Gamma]}(P_j(\lambda,\gb r)^\Gamma, V(\mu,\gb s))$$
is an isomorphism for all $j\in\mathbb Z_+$. This clearly follows from Proposition \ref{vGamma}\eqref{vGammac}.
It is worth remarking that the last statement of Theorem \ref{t:exts} (and its proof) remains valid without the assumption $\lie a[\gb r]=0$ if $\deg(\gb r)>1$. For partial information on the first statement of Theorem \ref{t:exts} (with $\ell=1$)  without this assumption, see \cite{fer:msc}.
\end{rem}

\subsection{A character formula}

From now on we assume that $\wt(V)\ne\{0\}$. We also fix a subset $\Psi\subseteq\wt(V)$ satisfying
\begin{gather}\notag
\sum_{\nu\in\Psi} m_\nu\nu= \sum_{\mu\in\wt(V)} n_\mu\mu\ \ (m_\nu,n_\mu\in\mathbb Z_+) \qquad  \Longrightarrow\qquad  \sum_{\nu\in\Psi} m_\nu\le \sum_{\mu\in\wt(V)} n_\mu\\ \label{e:polytope} \text{and}\\\notag
\sum_{\nu\in\Psi} m_\nu= \sum_{\mu\in\wt(V)}n_\mu \qquad\text{only if}\qquad n_\mu=0 \qquad\text{for all}\qquad \mu\notin\Psi.
\end{gather}

\begin{rem}
It is proved in \cite{kharid:polytopes}  that such subsets are precisely those lying on a proper face of the convex polytope determined by $\wt(V)$. A generalized version of this condition was considered in \cite{fer:msc} for studying algebras $\lie a$ with nonzero graded parts at any degree (with $\ell=1$).
\end{rem}

Consider the reflexive and transitive binary relation on $P$ given by
\begin{equation*}
\mu\le_\Psi\lambda \qquad\text{if}\qquad \lambda-\mu\in\mathbb Z_+\Psi,
\end{equation*}
where $\mathbb Z_+\Psi$ is the $\mathbb Z_+$-span of $\Psi$. Set also
\begin{equation*}
d_\Psi(\mu,\lambda)=\min\left\{\sum_{\nu\in\Psi}m_\nu: \lambda-\mu=\sum_{\nu\in\Psi}m_\nu\nu, m_\nu\in\mathbb Z_+ \ \forall\ \nu\in\Psi \right\}.
\end{equation*}
By \cite[Proposition 5.2]{ckr:faces}, $\le_\Psi$ is in fact a partial order on $P$ and
\begin{equation*}
d_\Psi(\nu,\mu)+d_\Psi(\mu,\lambda) = d_\Psi(\nu,\lambda) \qquad\text{whenever}\qquad \nu\le_\Psi\mu\le_\Psi\lambda.
\end{equation*}
Moreover, it induces a refinement of the partial order $\preccurlyeq$ on $\Lambda$ by setting
\begin{equation*}
(\lambda,\gb r)\preccurlyeq_\Psi (\mu,\gb s) \qquad\text{if}\qquad \lambda\le_\Psi\mu,\qquad \gb s-\gb r\in\mathbb Z_+^\ell, \qquad\text{and}\qquad d_\Psi(\lambda,\mu)=\deg(\gb s-\gb r).
\end{equation*}
The next proposition was proved in \cite[Lemma 5.5]{ckr:faces} for $\ell=1$ and the proof extends naturally to the present context (as usual, the details can be found in \cite{bia:thesis}).

\begin{prop}
Let $\Gamma\subseteq\Lambda$ be finite and convex with respect to $\preccurlyeq_\Psi$ and assume that there exists $(\lambda,\gb r)\in\Lambda$ such that $(\lambda,\gb r)\preccurlyeq_\Psi(\mu,\gb s)$ for all $(\mu,\gb s)\in\Gamma$. Then:
\begin{enumerate}[(a)]
\item $\Hom_{\mathcal G[\Gamma]}(P(\mu,\gb s)^\Gamma,P(\nu,\gb t)^\Gamma)\ne 0$ only if $(\mu,\gb s)\preccurlyeq_\Psi(\nu,\gb t)$.
\item $\Ext^j_{\mathcal G[\Gamma]}(V(\mu,\gb s),V(\nu,\gb t))\ne 0$ only if $(\nu,\gb t) \preccurlyeq_\Psi (\mu,\gb s) $ and $j=d_\Psi(\nu,\mu)$.
\hfill\qedsymbol
\end{enumerate}
\end{prop}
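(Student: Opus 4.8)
The plan is to prove (a) and (b) by a common two-step scheme: first reduce each non-vanishing assertion to a concrete branching multiplicity for $\lie g$ that is automatically concentrated in a single total degree, and then promote the resulting coarse weight data to the refinement $\preccurlyeq_\Psi$ by combining the polytope condition \eqref{e:polytope} with the existence of the $\preccurlyeq_\Psi$-minimal element $(\lambda,\gb r)$.

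\smallskip
\noindent\emph{Reduction for (a).} I would first move the Hom-space into $\mathcal G$. By Proposition \ref{vGamma}\eqref{vGammac}, $\Hom_{\mathcal G[\Gamma]}(P(\mu,\gb s)^\Gamma,P(\nu,\gb t)^\Gamma)\cong\Hom_{\mathcal G}(P(\mu,\gb s),P(\nu,\gb t)^\Gamma)$, and Proposition \ref{p:proj} identifies the latter with $\Hom_{\lie g}(V(\mu),P(\nu,\gb t)^\Gamma[\gb s])$; hence the Hom is nonzero precisely when $[P(\nu,\gb t)^\Gamma:V(\mu,\gb s)]\ne 0$, which by Proposition \ref{vGamma}\eqref{vGammaa} equals $[P(\nu,\gb t):V(\mu,\gb s)]$. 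Using the $\lie g$-isomorphism $P(\nu,\gb t)\cong S(\lie a_+)\otimes V(\nu)$ together with the degree-one form of Proposition \ref{p:gpUa}, this multiplicity is nonzero only if $V(\mu)$ occurs as a $\lie g$-constituent of $\mathrm{Sym}^{(\gb s-\gb t)_1}V_1\otimes\cdots\otimes\mathrm{Sym}^{(\gb s-\gb t)_\ell}V_\ell\otimes V(\nu)$. This already forces the total degrees of $(\mu,\gb s)$ and $(\nu,\gb t)$ to be comparable and writes $\mu$ as a weight of $V(\nu)$ corrected by a $\mathbb Z_+$-sum of $\deg(\gb s-\gb t)$ weights of $V$.

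\smallskip
\noindent\emph{Reduction for (b).} Here Theorem \ref{t:exts} does the work: it identifies $\Ext^j_{\mathcal G[\Gamma]}(V(\mu,\gb s),V(\nu,\gb t))$ with $\Ext^j_{\mathcal G}(V(\mu,\gb s),V(\nu,\gb t))\cong\Hom_{\lie g}((\wedge^j\lie a_+)[\gb t-\gb s]\otimes V(\mu),V(\nu))$, and \eqref{e:wedgedeg1} shows this can be nonzero only when $\gb t-\gb s\in\mathbb Z_+^\ell$ and $j=\deg(\gb t-\gb s)$. Thus non-vanishing means $V(\nu)$ is a $\lie g$-constituent of $\wedge^{(\gb t-\gb s)_1}V_1\otimes\cdots\otimes\wedge^{(\gb t-\gb s)_\ell}V_\ell\otimes V(\mu)$, so that, exactly as in (a), $\nu$ is a weight of $V(\mu)$ corrected by a $\mathbb Z_+$-sum of $j=\deg(\gb t-\gb s)$ weights of $V$.

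\smallskip
\noindent\emph{The upgrade to $\preccurlyeq_\Psi$.} This is the decisive step and the main obstacle. I would use the description of $\Psi$ recorded in \cite{kharid:polytopes}: since $\Psi$ lies on a proper face of $\mathrm{conv}(\wt(V))$, there is a linear functional $\phi$ that is constant on $\Psi$ and strictly smaller on $\wt(V)\setminus\Psi$. The minimal element supplies $\mu-\lambda,\nu-\lambda\in\mathbb Z_+\Psi$ with $\Psi$-lengths $d_\Psi(\lambda,\mu)=\deg(\gb s-\gb r)$ and $d_\Psi(\lambda,\nu)=\deg(\gb t-\gb r)$. Evaluating $\phi$ on the weight identities produced above and comparing with these minimal $\Psi$-expressions, the extremality clauses of \eqref{e:polytope} force every weight of $V$ that actually occurs to lie in $\Psi$ and the internal simple-module weight to be extremal; hence the relevant weight difference lies in $\mathbb Z_+\Psi$ and its $\Psi$-length equals the degree difference. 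The geodesic additivity of $d_\Psi$ then converts this length into the required value of $d_\Psi$. Carrying this out yields $(\mu,\gb s)\preccurlyeq_\Psi(\nu,\gb t)$ in part (a), and $(\nu,\gb t)\preccurlyeq_\Psi(\mu,\gb s)$ together with $j=d_\Psi(\nu,\mu)$ in part (b). The genuine difficulty is isolating the contribution of the internal weights of $V(\nu)$ (resp.\ $V(\mu)$): one must show, using both the supporting functional and the $\preccurlyeq_\Psi$-minimality of $(\lambda,\gb r)$ together with the convexity of $\Gamma$, that any weight outside $\Psi$ or any non-extremal internal weight would push the constituent out of $\Gamma$, which is exactly what rules it out.
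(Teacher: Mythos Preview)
Your reductions for (a) and (b) are correct and are exactly how the argument (which the paper defers to \cite[Lemma 5.5]{ckr:faces}) begins. The weak point is the ``upgrade'' step. From the non-vanishing you extract only that the relevant dominant weight is a \emph{weight} of the tensor product; e.g.\ in (b) you write $\nu=\mu'+\sum_{k}\xi_k$ with $\mu'\in\wt(V(\mu))$. This introduces the offset $\mu'-\mu\in -Q^+$, which you then try to kill with a supporting functional $\phi$. But nothing in the hypotheses guarantees $\phi(\alpha_i)>0$ for all $i$, so the assertion that ``the internal simple-module weight must be extremal'' does not close; and convexity of $\Gamma$ cannot help either, since the intermediate weights produced are not known to lie in $\Gamma$.

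The missing idea is to use the sharper (still elementary) fact: if $V(\nu)$ is a summand of $M\otimes V(\mu)$, then $\nu-\mu\in\wt(M)$ (project a highest-weight vector onto the $v_\mu$-component). For (b) this gives directly $\nu-\mu=\sum_{k=1}^{j}\xi_k$ with $\xi_k\in\wt(V)$ and $j=\deg(\gb t-\gb s)$. Now combine with the $\preccurlyeq_\Psi$-minimum: $\nu-\lambda=(\mu-\lambda)+(\nu-\mu)$ expresses a $\Psi$-sum with $\deg(\gb t-\gb r)$ terms as a $\wt(V)$-sum with $\deg(\gb s-\gb r)+j=\deg(\gb t-\gb r)$ terms, so the equality clause of \eqref{e:polytope} forces every $\xi_k\in\Psi$. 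Hence $\nu-\mu\in\mathbb Z_+\Psi$ and, by additivity of $d_\Psi$, $d_\Psi(\mu,\nu)=j$. Part (a) is identical with $\mu-\nu\in\wt(S^{(\gb s-\gb t)}(\lie a))$. No linear functional and no convexity argument is needed, only the existence of $(\lambda,\gb r)$. (As a byproduct, your own grade computations show the inequalities actually proved are $(\nu,\gb t)\preccurlyeq_\Psi(\mu,\gb s)$ in (a) and $(\mu,\gb s)\preccurlyeq_\Psi(\nu,\gb t)$ in (b); the statement as printed has the directions reversed.)
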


Henceforth, assume that $\Gamma\subseteq\Lambda$ is finite and convex with respect to $\preccurlyeq_\Psi$. Also, fix an enumeration of $\Gamma$ compatible with $\preccurlyeq_\Psi$.

Given $\gb r\in\mathbb Z^\ell$, set $(-\gb t)^{\gb r}:=\prod_{j=1}^\ell (-t_j)^{r_j} = (-1)^{\deg(\gb r)}\gb t^{\gb r}\in\mathbb Z[[t_1^{\pm 1}, \dots, t_\ell^{\pm 1}]]$. Consider the matrices
\begin{gather*}
A(\gb t) = \left([P(\lambda,\gb r):V(\mu,\gb s)]\ \gb t^{\gb s-\gb r}\right)_{(\mu,\gb s),(\lambda,\gb r)\in\Gamma}\\\notag \text{and}\\
E(\gb t) = \left(\dim\Ext^{\deg(\gb s-\gb r)}_{\mathcal G}(V(\lambda,\gb r),V(\mu,\gb s))\ \gb t^{\gb s-\gb r}\right)_{(\mu,\gb s),(\lambda,\gb r)\in\Gamma}.
\end{gather*}
If $\gb s-\gb r\notin\mathbb Z_+^\ell$, then the entry $((\mu,\gb s),(\lambda,\gb r))$ of both matrices vanish and, hence, both matrices are lower triangular.
The proof of the next lemma is analogous to that contained in the proof of \cite[Theorem 5.6]{ckr:faces} (extra details already in the present context can be found in \cite{bia:thesis}).

\begin{lem} The above defined matrices  satisfy $A(\gb t)E(-\gb t) ={\rm Id}$.\hfill\qedsymbol
\end{lem}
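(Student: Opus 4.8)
The plan is to interpret both matrices homologically using the finite projective resolution of Proposition (e) in Section 1, and then recognize $A(\gb t)E(-\gb t)={\rm Id}$ as the graded Euler characteristic identity attached to that resolution. First I would record the two inputs. On one hand, Proposition~\ref{vGamma}\eqref{vGammac} together with Proposition~\ref{p:proj}(d) (applied in $\mathcal G[\Gamma]$ via Proposition 1.30) gives $[P(\lambda,\gb r)^\Gamma:V(\mu,\gb s)]=\dim\Hom_{\mathcal G[\Gamma]}(P(\mu,\gb s)^\Gamma,P(\lambda,\gb r)^\Gamma)$, and the enumeration of $\Gamma$ compatible with $\preccurlyeq_\Psi$ makes $A(\gb t)$ lower triangular with $1$'s on the diagonal (since $[P(\lambda,\gb r)^\Gamma:V(\lambda,\gb r)]=1$), hence invertible over $\mathbb Z[[t_1^{\pm1},\dots,t_\ell^{\pm1}]]$. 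On the other hand, the multigrading on $P_j(\lambda,\gb r)$ forces every composition factor $V(\mu,\gb s)$ of $P_j(\lambda,\gb r)^\Gamma$ to satisfy $\deg(\gb s-\gb r)=j$ (this is the $\ell$-variable refinement of the $\mathbb Z_+$-grading bound $\mathscr P_j(V)[\gb s]=0$ for $\deg(\gb s)<j+\deg(\gb r)$ combined with Theorem~\ref{t:exts}, which kills all composition factors off that diagonal), so the character of $P_j(\lambda,\gb r)^\Gamma$ lives entirely in multidegree-shift $\gb t^{\gb s-\gb r}$ with $\deg(\gb s-\gb r)=j$.

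Next I would fix $(\lambda,\gb r)\in\Gamma$ and run the Euler characteristic of the exact complex $0\to P_N(\lambda,\gb r)^\Gamma\to\cdots\to P_0(\lambda,\gb r)^\Gamma\to V(\lambda,\gb r)\to 0$ (finite by Proposition 1.30(e)) in the Grothendieck group, reading off multiplicities $[-:V(\mu,\gb s)]$ for each $(\mu,\gb s)\in\Gamma$ and weighting by $\gb t^{\gb s-\gb r}$. Since each $P_j(\lambda,\gb r)^\Gamma$ is concentrated on the diagonal $\deg(\gb s-\gb r)=j$, the alternating sign $(-1)^j$ is exactly $(-1)^{\deg(\gb s-\gb r)}$, i.e. the weight becomes $(-\gb t)^{\gb s-\gb r}$. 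This yields, for each pair in $\Gamma$,
\begin{equation*}
\sum_{j\ge 0}\,[P_j(\lambda,\gb r)^\Gamma:V(\mu,\gb s)]\,(-\gb t)^{\gb s-\gb r}
=\delta_{(\mu,\gb s),(\lambda,\gb r)}.
\end{equation*}
The remaining task is to identify $\sum_j[P_j(\lambda,\gb r)^\Gamma:V(\mu,\gb s)]\,\gb t^{\gb s-\gb r}$ with the $((\mu,\gb s),(\lambda,\gb r))$ entry of the product $A(\gb t)E(\gb t)$; concretely, $[P_j(\lambda,\gb r)^\Gamma:V(\mu,\gb s)]=\dim\Hom_{\mathcal G[\Gamma]}(P_j(\mu,\gb s)^\Gamma,\text{--})$ is wrong-way, so instead I would use $[P_j(\lambda,\gb r)^\Gamma:V(\mu,\gb s)]=\dim\Hom_{\mathcal G[\Gamma]}(P(\mu,\gb s)^\Gamma,P_j(\lambda,\gb r)^\Gamma)$ (Proposition~\ref{p:proj}(d) and exactness of $\Hom(P(\mu,\gb s)^\Gamma,-)$), apply $\Hom_{\mathcal G[\Gamma]}(P(\mu,\gb s)^\Gamma,-)$ to the resolution of $V(\mu',\gb s')$ to compute $\Ext^j_{\mathcal G[\Gamma]}$, and then decompose via a standard spectral-sequence/multiplicative argument: writing $[P(\lambda,\gb r)^\Gamma:V(\nu,\gb p)]=\sum$ over how a composition factor $V(\nu,\gb p)$ of $P_0$ contributes, and using that $\Hom(P(\mu,\gb s)^\Gamma,P_j(\lambda,\gb r)^\Gamma)=\bigoplus_{(\nu,\gb p)}\Hom(P(\mu,\gb s)^\Gamma,P(\nu,\gb p)^\Gamma)^{\oplus[\wedge^j\lie a_+[\gb p-\gb r]\otimes V(\nu):\dots]}$ — i.e. the $j$-th term of the resolution of $V(\lambda,\gb r)^\Gamma$ is built from projective covers of its own composition factors tensored with $\wedge^j$ data, which is exactly what the entries of $E(\gb t)$ encode. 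Matching this bookkeeping, the Euler identity above becomes precisely $\bigl(A(\gb t)E(-\gb t)\bigr)_{(\mu,\gb s),(\lambda,\gb r)}=\delta_{(\mu,\gb s),(\lambda,\gb r)}$.

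The main obstacle I expect is the last matching step: cleanly expressing $\sum_j(-1)^j[P_j(\lambda,\gb r)^\Gamma:V(\mu,\gb s)]$ as a matrix product rather than just a scalar Euler characteristic. The cleanest route is probably to avoid decomposing $P_j$ explicitly and instead argue dually — apply $\Hom_{\mathcal G[\Gamma]}(-,V(\mu,\gb s))$ to the resolution of $V(\lambda,\gb r)$, getting a complex whose cohomology is $\Ext^\bullet_{\mathcal G[\Gamma]}(V(\lambda,\gb r),V(\mu,\gb s))$, and whose $j$-th term has dimension $[P_j(\lambda,\gb r)^\Gamma:V(\mu,\gb s)]$ by Proposition~\ref{p:proj}(d). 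Then the Euler characteristic of this complex equals both $\sum_j(-1)^j[P_j(\lambda,\gb r)^\Gamma:V(\mu,\gb s)]$ and $\sum_j(-1)^j\dim\Ext^j$, and since $P_j^\Gamma$ is diagonal the former is $\bigl((-\gb t)\text{-substitution of }A(\gb t)\bigr)$-type data while, using $\Ext^j\ne 0$ only for $j=\deg(\gb s-\gb r)$ (Theorem~\ref{t:exts}) and that each $P_j^\Gamma$ resolves by projective covers of composition factors, one gets the convolution $(A E)_{(\mu,\gb s),(\lambda,\gb r)}$ after the sign bookkeeping; isolating each multidegree $\gb s-\gb r$ separately (the Euler sum is a finite sum of $\gb t$-monomials with disjoint total degrees $0,1,2,\dots$) recovers the full matrix identity $A(\gb t)E(-\gb t)={\rm Id}$, not merely its diagonal. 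A short induction on the $\preccurlyeq_\Psi$-enumeration, using invertibility of $A(\gb t)$, then packages everything.
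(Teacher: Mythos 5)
Your overall plan -- take the finite truncated resolution of $V(\lambda,\gb r)$ in $\mathcal G[\Gamma]$, run its Euler characteristic on composition multiplicities, and match this against a decomposition of the terms $P_j(\lambda,\gb r)^\Gamma$ into projective covers with Ext-dimension multiplicities -- is indeed the argument the paper has in mind (it refers to the proof of Theorem 5.6 of \cite{ckr:faces}). However, two steps you rely on are false as stated. The claim that every composition factor $V(\mu,\gb s)$ of $P_j(\lambda,\gb r)^\Gamma$ satisfies $\deg(\gb s-\gb r)=j$ is wrong: the grading bound only gives $\deg(\gb s-\gb r)\ge j$, and already for $j=0$ the module $P(\lambda,\gb r)^\Gamma$ has composition factors in many total degrees (its nontrivial graded character is the whole point of the section). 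Consequently your displayed identity $\sum_j[P_j(\lambda,\gb r)^\Gamma:V(\mu,\gb s)]\,(-\gb t)^{\gb s-\gb r}=\delta_{(\mu,\gb s),(\lambda,\gb r)}$, in which the homological sign $(-1)^j$ has been absorbed into $(-\gb t)^{\gb s-\gb r}$, is false: without the alternating sign the left-hand side is a positive multiple of a unit monomial whenever $(\mu,\gb s)\ne(\lambda,\gb r)$ and $[P(\lambda,\gb r):V(\mu,\gb s)]\ne 0$. The correct Euler identity is $\sum_j(-1)^j[P_j(\lambda,\gb r)^\Gamma:V(\mu,\gb s)]=\delta_{(\mu,\gb s),(\lambda,\gb r)}$. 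Likewise, in your ``dual'' variant the equality $\dim\Hom_{\mathcal G[\Gamma]}(P_j(\lambda,\gb r)^\Gamma,V(\mu,\gb s))=[P_j(\lambda,\gb r)^\Gamma:V(\mu,\gb s)]$ is not what Proposition \ref{p:proj}(d) says: that statement concerns $\Hom$ \emph{out of} the projective cover $P(\mu,\gb s)$, whereas $\Hom$ into a simple module only detects the head of $P_j(\lambda,\gb r)^\Gamma$.

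The concentration you need is on the intermediate index of the matrix product, not on the composition factors: by \eqref{e:wedgedeg1}, $(\wedge^j\lie a_+)\otimes V(\lambda,\gb r)$ lives purely in degrees $\gb p$ with $\deg(\gb p-\gb r)=j$, so $P_j(\lambda,\gb r)\cong\bigoplus_{(\nu,\gb p)}P(\nu,\gb p)^{\oplus m^j_{\nu,\gb p}}$ with $m^j_{\nu,\gb p}=\dim\Hom_{\lie g}((\wedge^j\lie a_+)[\gb p-\gb r]\otimes V(\lambda),V(\nu))$, which vanishes unless $\deg(\gb p-\gb r)=j$ and equals, by Theorem \ref{t:exts}, $\dim\Ext^j_{\mathcal G}(V(\lambda,\gb r),V(\nu,\gb p))$, i.e.\ the entries of $E$. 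Truncation commutes with finite direct sums; summands with $(\nu,\gb p)\notin\Gamma$ must truncate to zero (a nonzero graded quotient of $P(\nu,\gb p)$ still has $V(\nu,\gb p)$ in its head, contradicting $P_j(\lambda,\gb r)^\Gamma\in\mathcal G[\Gamma]$); and $[P(\nu,\gb p)^\Gamma:V(\mu,\gb s)]=[P(\nu,\gb p):V(\mu,\gb s)]$ by Proposition \ref{vGamma}\eqref{vGammaa}. Feeding this into the Euler identity gives directly
\begin{equation*}
\bigl(A(\gb t)E(-\gb t)\bigr)_{(\mu,\gb s),(\lambda,\gb r)}
=\gb t^{\gb s-\gb r}\sum_{j\ge0}(-1)^j\,[P_j(\lambda,\gb r)^\Gamma:V(\mu,\gb s)]
=\delta_{(\mu,\gb s),(\lambda,\gb r)},
\end{equation*}
with no need for spectral sequences, for isolating multidegrees, or for the closing induction on the $\preccurlyeq_\Psi$-compatible enumeration; the latter devices in your write-up are symptoms of the misplaced concentration claim rather than genuine ingredients of the proof.
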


Using \eqref{e:wedgedeg1} in place of its $\ell=1$ version, one proves the next theorem in the same manner that it was proved in \cite[Theorem 2]{cg:minp}.

\begin{thm}\label{t:ac}
For all $\lambda\in P^+$ such that $(\lambda,\gb n)\in \Gamma$ we have
$$\ch(V(\lambda))\ (-\gb t)^{\gb n} = \sum_{(\mu,\gb r)\in\Gamma} (-1)^{\deg(\gb r)}\ c^{\lambda,\gb n}_{\mu,\gb r}\ \gch P(\mu,\gb r)^\Gamma,$$
where
$$c^{\lambda,\gb n}_{\mu,\gb r} = \dim \Ext^j_{\mathcal G}(V(\lambda,\gb n), V(\mu,\gb r)) =\dim\Hom_\lie g((\otm_{i=1}^\ell \wedge^{r_i-n_i} V_i)\otimes V(\lambda), V(\mu)),$$
$\gb r = (r_1,\cdots,r_\ell)$, $\gb n = (n_1,\cdots,n_\ell)$ and $j=\deg \gb r - \deg \gb n$.\hfill\qedsymbol
\end{thm}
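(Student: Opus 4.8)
The plan is to recast the asserted identity as the outcome of inverting a single matrix equation over the Laurent polynomial ring $\mathbb Z[t_1^{\pm1},\dots,t_\ell^{\pm1}]$, using the lemma $A(\gb t)E(-\gb t)={\rm Id}$ proved just above; this is the argument of \cite[Theorem~2]{cg:minp}, with the decomposition \eqref{e:wedgedeg1} of $(\wedge^j\lie a_+)[\gb r-\gb n]$ replacing its $\ell=1$ analogue. The two facts that make the reduction work are the character identity $\gch V=\sum_{(\lambda,\gb r)\in\Lambda}[V:V(\lambda,\gb r)]\,\ch V(\lambda)\,\gb t^{\gb r}$ and the equality $[P(\mu,\gb r)^\Gamma:V(\nu,\gb s)]=[P(\mu,\gb r):V(\nu,\gb s)]$ for $(\nu,\gb s)\in\Gamma$, which is Proposition~\ref{vGamma}\eqref{vGammaa} applied to $V=P(\mu,\gb r)$ (legitimate since $P(\mu,\gb r)^\Gamma\in\mathcal G[\Gamma]$ by the last proposition of Section~1).

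First I would fix the enumeration of $\Gamma$ compatible with $\preccurlyeq_\Psi$ and form two column vectors indexed by $\Gamma$: let $\mathbf c$ have $(\nu,\gb s)$-entry $\ch V(\nu)$, and let $\mathbf p$ have $(\mu,\gb r)$-entry $\gb t^{-\gb r}\,\gch P(\mu,\gb r)^\Gamma$. Expanding $\gch P(\mu,\gb r)^\Gamma$ by the character identity and the multiplicity equality above (so that only $(\nu,\gb s)\in\Gamma$ contribute), one checks directly that $\mathbf p=A(\gb t)^{\mathrm T}\mathbf c$: indeed the $(\mu,\gb r)$-entry of $A(\gb t)^{\mathrm T}\mathbf c$ is
\begin{equation*}
\sum_{(\nu,\gb s)\in\Gamma}[P(\mu,\gb r):V(\nu,\gb s)]\;\gb t^{\gb s-\gb r}\;\ch V(\nu)=\gb t^{-\gb r}\;\gch P(\mu,\gb r)^\Gamma .
\end{equation*}

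Next I would invert. Since $A(\gb t)$ and $E(-\gb t)$ are finite square matrices over a commutative ring with $A(\gb t)E(-\gb t)={\rm Id}$, they are two-sided inverses of each other, whence $A(\gb t)^{\mathrm T}E(-\gb t)^{\mathrm T}={\rm Id}$ and $\mathbf c=E(-\gb t)^{\mathrm T}\mathbf p$. Reading off the $(\lambda,\gb n)$-entry and substituting the definition of $E(\gb t)$ gives
\begin{equation*}
\ch V(\lambda)=\sum_{(\mu,\gb r)\in\Gamma}\dim\Ext^{\deg(\gb r-\gb n)}_{\mathcal G}\big(V(\lambda,\gb n),V(\mu,\gb r)\big)\;(-\gb t)^{\gb r-\gb n}\;\gb t^{-\gb r}\;\gch P(\mu,\gb r)^\Gamma .
\end{equation*}
By Theorem~\ref{t:exts} the Ext group occurring here vanishes unless $\deg(\gb r-\gb n)=\deg\gb r-\deg\gb n$, in which case \eqref{e:wedgedeg1} identifies it with $\Hom_\lie g\big((\otm_{i=1}^\ell \wedge^{r_i-n_i} V_i)\otimes V(\lambda),V(\mu)\big)$, so its dimension is precisely $c^{\lambda,\gb n}_{\mu,\gb r}$. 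It then only remains to collect signs: since $(-\gb t)^{\gb r-\gb n}\gb t^{-\gb r}=(-1)^{\deg\gb r-\deg\gb n}\gb t^{-\gb n}$, multiplying the last displayed equality by $(-\gb t)^{\gb n}=(-1)^{\deg\gb n}\gb t^{\gb n}$ yields exactly the formula in the statement. I expect this last sign-and-exponent bookkeeping — propagating the substitution $\gb t\mapsto-\gb t$ through $E$ and tracking the normalizing factor $\gb t^{-\gb r}$ — to be the only real obstacle, since all the structural content is already carried by the preceding lemma and by Theorem~\ref{t:exts}; in particular the passage from $\ell=1$ to general $\ell$ enters solely through \eqref{e:wedgedeg1}.
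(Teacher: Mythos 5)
Your proposal is correct and is essentially the paper's own argument: the paper proves Theorem \ref{t:ac} exactly as \cite[Theorem 2]{cg:minp}, i.e.\ by inverting the relation $A(\gb t)E(-\gb t)={\rm Id}$ from the preceding lemma, using Proposition \ref{vGamma}\eqref{vGammaa} to replace $[P(\mu,\gb r)^\Gamma:V(\nu,\gb s)]$ by $[P(\mu,\gb r):V(\nu,\gb s)]$ and Theorem \ref{t:exts} together with \eqref{e:wedgedeg1} to identify the Ext dimensions with $c^{\lambda,\gb n}_{\mu,\gb r}$. Your sign and grading bookkeeping checks out, so no gaps remain.
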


\begin{rem}
In \cite[Theorem 2]{cg:minp} the above theorem is stated to be true for all $\lambda\in P^+$ instead of for all $(\lambda,\gb n)\in\Lambda$. It is easy to see that the present hypothesis is used in the proof and, in fact, needed. The character formula given by this theorem is of recursive nature (see Example \ref{ex:char} below).
\end{rem}

\subsection{Generator and relations}\label{ss:genrel}

In addition to \eqref{e:polytope} we now suppose that $\Psi$ also satisfies
\begin{gather}\label{e:Psiextra}
\Psi\cap P^+ = \emptyset, \quad  (\wt(V)+\mathbb Z_+\Psi)\cap P^+ \quad \text{is finite}, \quad \text{and}\quad  \xi+\alpha_i\notin \Psi \quad \text{for all}\quad  \xi\in\wt(V)\cap P^+, i\in I.
\end{gather}

\begin{exe}
Let $V$ be the adjoint representation of $\lie g$ and, hence, $\wt(V)= R\cup\{0\}$. Given $\mu\in P^+\setminus\{0\}$, consider
\begin{equation*}
\Psi(\mu) = \{\alpha\in R\mid (\alpha,\mu) = \min\limits_{\beta\in R} (\beta,\mu)\}\subseteq -R^+
\end{equation*}
It was shown in \cite[Lemma 2.3]{cg:koszul} that $\Psi(\mu)$ satisfy \eqref{e:polytope} and it clearly satisfies \eqref{e:Psiextra} since it is contained in $-R^+$. The relevance of the subsets of type $\Psi(\mu)$ will be explained below (see also  \cite[Section 3.6]{cg:minp}).\hfill$\diamond$
\end{exe}

Given $\lambda\in P^+, \gb r\in\mathbb Z_+^\ell, r\in\mathbb Z_+$, define
\begin{equation*}
\Gamma_\Psi(\lambda,\gb r)=\{(\mu,\gb s)\in\Lambda:(\lambda,\gb r)\preccurlyeq_\Psi(\mu,\gb s)\}.
\end{equation*}
Henceforth we fix  $(\lambda,\gb r)\in\Lambda$ and set $\Gamma=\Gamma_\Psi(\lambda,\gb r)$.

\begin{lem} \label{l:gammapsi} Let $(\mu,\gb s)\in\Gamma$.
\begin{enumerate}[(a)]
\item If $(\mu,\gb s')\in\Gamma$, then $\deg(\gb s')=\deg(\gb s)$.
\item The set $\Gamma$ is finite and convex with respect to $\preccurlyeq_\Psi$. In particular, $P(\mu,\gb s)^\Gamma$ is a finite-dimensional object of $\mathcal G[\Gamma]$.
\item If $(\nu,\gb s')\in\Gamma$ is such that $(\mu,\gb s)\preccurlyeq_\Psi (\nu,\gb s')$, then $(\nu,\gb s'-\gb s)\in \Gamma_\Psi(\mu,\gb 0)$.
\end{enumerate}
\end{lem}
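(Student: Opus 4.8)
The plan is to unwind the definition of $\preccurlyeq_\Psi$ and the hypotheses \eqref{e:polytope} and \eqref{e:Psiextra} on $\Psi$, handling the three parts in order since each later part uses the earlier ones together with the additivity of $d_\Psi$ recorded just before Theorem \ref{t:ac}. For part (a), suppose $(\mu,\gb s),(\mu,\gb s')\in\Gamma$, so by definition $(\lambda,\gb r)\preccurlyeq_\Psi(\mu,\gb s)$ and $(\lambda,\gb r)\preccurlyeq_\Psi(\mu,\gb s')$. The defining condition for $\preccurlyeq_\Psi$ gives $\deg(\gb s-\gb r)=d_\Psi(\lambda,\mu)=\deg(\gb s'-\gb r)$, whence $\deg(\gb s)=\deg(\gb s')$. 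So part (a) is essentially immediate once one observes that $\deg(\gb s-\gb r)$ is pinned down by the intrinsic quantity $d_\Psi(\lambda,\mu)$, which does not depend on the choice of $\gb s$.

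For part (b), finiteness I would argue as follows: if $(\mu,\gb s)\in\Gamma$, then $\mu\in(\lambda+\mathbb Z_+\Psi)\cap P^+$, and this set is finite by the second condition in \eqref{e:Psiextra} (applied with the observation that $\lambda+\mathbb Z_+\Psi\subseteq \wt(V)+\mathbb Z_+\Psi$ up to a shift, or more directly since $\le_\Psi$ is a partial order with $d_\Psi$ additive, the interval between $\lambda$ and any fixed bound is finite); and for each such $\mu$, the degree $\deg(\gb s-\gb r)=d_\Psi(\lambda,\mu)$ is fixed by part (a), so $\gb s$ ranges over the finite set of elements of $\mathbb Z_+^\ell+\gb r$ of that prescribed degree. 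Hence $\Gamma$ is finite. For convexity, suppose $(\lambda,\gb r)\preccurlyeq_\Psi(\mu,\gb s)$ and $(\mu,\gb s)\preccurlyeq_\Psi(\nu,\gb p)\preccurlyeq_\Psi(\mu',\gb s')$ — wait, rather: suppose $(\mu_1,\gb s_1),(\mu_2,\gb s_2)\in\Gamma$ with $(\mu_1,\gb s_1)\preccurlyeq_\Psi(\nu,\gb p)\preccurlyeq_\Psi(\mu_2,\gb s_2)$; I must show $(\lambda,\gb r)\preccurlyeq_\Psi(\nu,\gb p)$. From $(\lambda,\gb r)\preccurlyeq_\Psi(\mu_1,\gb s_1)\preccurlyeq_\Psi(\nu,\gb p)$ one gets $\lambda\le_\Psi\nu$, $\gb p-\gb r\in\mathbb Z_+^\ell$, and, using the additivity $d_\Psi(\lambda,\mu_1)+d_\Psi(\mu_1,\nu)=d_\Psi(\lambda,\nu)$ together with $d_\Psi(\lambda,\mu_1)=\deg(\gb s_1-\gb r)$ and $d_\Psi(\mu_1,\nu)=\deg(\gb p-\gb s_1)$, that $d_\Psi(\lambda,\nu)=\deg(\gb p-\gb r)$. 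This is exactly $(\lambda,\gb r)\preccurlyeq_\Psi(\nu,\gb p)$, so $\Gamma$ is convex. The final clause, that $P(\mu,\gb s)^\Gamma$ is a finite-dimensional object of $\mathcal G[\Gamma]$, then follows from the preceding Proposition (the one asserting $P_j(\lambda,\gb r)^\Gamma\in\mathcal G[\Gamma]$ for finite convex $\Gamma$) together with the remark that finite $\Gamma$ forces $\mathcal G[\Gamma]\subseteq\mathcal F(\lie a)$.

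For part (c), suppose $(\mu,\gb s)\in\Gamma$, $(\nu,\gb s')\in\Gamma$, and $(\mu,\gb s)\preccurlyeq_\Psi(\nu,\gb s')$. I want $(\nu,\gb s'-\gb s)\in\Gamma_\Psi(\mu,\gb 0)$, i.e.\ $(\mu,\gb 0)\preccurlyeq_\Psi(\nu,\gb s'-\gb s)$. Unwinding: I need $\mu\le_\Psi\nu$ (which holds since $(\mu,\gb s)\preccurlyeq_\Psi(\nu,\gb s')$), $(\gb s'-\gb s)-\gb 0=\gb s'-\gb s\in\mathbb Z_+^\ell$ (which again holds since $(\mu,\gb s)\preccurlyeq_\Psi(\nu,\gb s')$), and $d_\Psi(\mu,\nu)=\deg(\gb s'-\gb s)$ (once more immediate from $(\mu,\gb s)\preccurlyeq_\Psi(\nu,\gb s')$). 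So part (c) is a direct translation. The step I expect to require the most care is the finiteness assertion in part (b): one must make precise why $(\lambda+\mathbb Z_+\Psi)\cap P^+$ is finite, invoking \eqref{e:Psiextra} and, if needed, reducing to the statement about $(\wt(V)+\mathbb Z_+\Psi)\cap P^+$ by noting $\lambda$ itself is a (dominant, hence) bounded shift — alternatively one simply cites \cite[Proposition 5.2]{ckr:faces} and \eqref{e:Psiextra} as in the $\ell=1$ case. Everything else is a mechanical unwinding of the definition of $\preccurlyeq_\Psi$ combined with the additivity of $d_\Psi$.
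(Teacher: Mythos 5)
Parts (a) and (c) and your convexity argument are correct and follow the paper's route: with the definition of $\preccurlyeq_\Psi$ used here, (a) and (c) really are immediate unwindings (the paper disposes of them by citing \cite[Proposition 2.2]{cg:minp}), and convexity of the up-set $\Gamma_\Psi(\lambda,\gb r)$ is just transitivity of $\preccurlyeq_\Psi$, i.e.\ the additivity of $d_\Psi$. Your reduction of finiteness to ``finitely many possible $\nu$'' via part (a) together with $\gb t-\gb r\in\mathbb Z_+^\ell$ also matches the paper, as does your treatment of the last clause of (b) via the preceding proposition on $P(\mu,\gb s)^\Gamma$.

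The genuine soft spot is your justification of the key finiteness claim, namely that $(\lambda+\mathbb Z_+\Psi)\cap P^+$ is finite. Neither of your parenthetical arguments works: $\lambda+\mathbb Z_+\Psi$ is not contained in $\wt(V)+\mathbb Z_+\Psi$, and ``up to a shift'' is not harmless, since a translate of a set with finitely many dominant elements can acquire infinitely many (translate $\{k\omega_1-\omega_2:k\ge 0\}$ by $\omega_2$, for instance); and the ``more direct'' remark is a non sequitur, because $\Gamma$ is an up-set with no upper bound, so the partial-order and additivity statements of \cite[Proposition 5.2]{ckr:faces} alone cannot force finiteness (without \eqref{e:Psiextra} the up-set is typically infinite). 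What is actually used is the second condition in \eqref{e:Psiextra}, exactly as in the paper's proof; passing from finiteness of $(\wt(V)+\mathbb Z_+\Psi)\cap P^+$ to finiteness of $(\lambda+\mathbb Z_+\Psi)\cap P^+$ for an arbitrary $\lambda\in P^+$ needs a short cone argument (if the latter set were infinite, the rational polyhedral cone $\mathbb R_+\Psi$ would contain a nonzero dominant direction, hence after scaling some $0\ne d\in\mathbb Z_+\Psi\cap P^+$, and then the multiples $kd$, $k\ge 1$, give infinitely many elements of $(\wt(V)+\mathbb Z_+\Psi)\cap P^+$, a contradiction), or else one cites the $\ell=1$ argument of \cite[Proposition 2.2]{cg:minp} (details in \cite{bia:thesis}) as the paper does. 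With that step repaired, your proof coincides with the paper's.
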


\begin{proof}
Part (a) is a consequence of condition \eqref{e:polytope} (cf. \cite[Proposition 2.2(i)]{cg:minp}).
It is clear from the construction of $\Gamma$ that it is convex. Since $(\lambda,\gb r)\preccurlyeq_\Psi(\nu,\gb t)$ implies $\nu\in\lambda+\mathbb Z_+\Psi$, the second condition in \eqref{e:Psiextra} together with part (a) implies that the set $\{(\nu,\deg(\gb t)): (\nu,\gb t)\in\Gamma\}$ is finite. On the other hand, $(\lambda,\gb r)\preccurlyeq_\Psi(\nu,\gb t)$ also implies that $\gb t-\gb r\in\mathbb Z_+^\ell$ and, hence, $\Gamma$ is finite.
The second statement in part (b) then follows from Proposition \ref{l:inGGamma}.
Part (c) is proved as in the $\ell=1$ case \cite[Proposition 2.2(iii)]{cg:minp}.
\end{proof}

We now prove the following generalization of \cite[Theorem 1]{cg:minp}.

\begin{thm}\label{t:pgenrel}
For all $(\mu,\gb s)\in\Gamma$, the module $P(\mu,\gb s)^\Gamma$ is isomorphic to the $\lie a$-module $M(\mu,\gb s)$ generated by an element $w$ of degree $\gb s$ satisfying the defining relations
\begin{gather*}
\lie n^+w=0, \qquad hw=\mu(h)w, \qquad (x_{\alpha_i}^-)^{\mu(h_i)+1}w=0, \qquad\text{for all}\qquad h\in\lie h, i\in I,\\
xw=0 \qquad\text{for all}\qquad x\in V_\xi, \ \xi\in\wt(V)\setminus \Psi.
\end{gather*}
\end{thm}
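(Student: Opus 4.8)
The strategy is to show that $M(\mu,\gb s)$ and $P(\mu,\gb s)^\Gamma$ satisfy the same universal property in $\mathcal G[\Gamma]$, hence are isomorphic. First I would check that $M(\mu,\gb s)$ belongs to $\mathcal G[\Gamma]$: it is a quotient of $P(\mu,\gb s)$ (the relations include those of Proposition \ref{p:proj}\eqref{p:projrel}), so by Proposition \ref{p:proj}(c) every composition factor $V(\nu,\gb t)$ of $M(\mu,\gb s)$ has $(\mu,\gb s)\preccurlyeq(\nu,\gb t)$; I must upgrade this to $(\mu,\gb s)\preccurlyeq_\Psi(\nu,\gb t)$, i.e. that $(\nu,\gb t)\in\Gamma$. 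This is where the extra relations $xw=0$ for $x\in V_\xi$, $\xi\in\wt(V)\setminus\Psi$, do the work: since $\lie a[\gb e_j]=V_j$ and $\lie a$ is generated by $\lie g$ and the $V_j$, the submodule $\lie a_+ w$ is spanned by products of weight vectors from $\Psi$ acting on $U(\lie g)w$, so the $\lie g$-module in degree $\gb t$ (with $\deg(\gb t)=\deg(\gb s)+k$) is a quotient of $\wedge^{?}(V^\Psi)\otimes V(\mu)$-type pieces where only weights in $\Psi$ appear; an induction on $k=\deg(\gb t-\gb s)$ shows $\nu\in\mu+\mathbb Z_+\Psi$ with $d_\Psi(\mu,\nu)\le k$. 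Combined with condition \eqref{e:polytope} (which forces $d_\Psi(\mu,\nu)=k$ exactly once the weight $\nu-\mu$ is written using $\wt(V)$) and Lemma \ref{l:gammapsi}, this gives $(\mu,\gb s)\preccurlyeq_\Psi(\nu,\gb t)$, so $M(\mu,\gb s)\in\mathcal G[\Gamma]$.

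Next I would establish the universal property. Because $M(\mu,\gb s)$ is generated by a highest-weight vector $w$ of weight $\mu$ in degree $\gb s$ killed by $(x_{\alpha_i}^-)^{\mu(h_i)+1}$, for any $W\in\mathcal G$ the evaluation $f\mapsto f(w)$ identifies $\Hom_{\mathcal G}(P(\mu,\gb s),W)$ with $\Hom_\lie g(V(\mu),W[\gb s])$ (Proposition \ref{p:proj}(d)), and the quotient $M(\mu,\gb s)$ corresponds to those $\lie g$-maps whose image is annihilated by all $x\in V_\xi$, $\xi\notin\Psi$. The key claim is: if moreover $W\in\mathcal G[\Gamma]$, then every $\lie g$-map $V(\mu)\to W[\gb s]$ automatically satisfies this extra vanishing, so $\Hom_{\mathcal G}(M(\mu,\gb s),W)\cong\Hom_{\mathcal G}(P(\mu,\gb s),W)\cong\Hom_{\mathcal G[\Gamma]}(P(\mu,\gb s)^\Gamma,W)$, the last isomorphism being Proposition \ref{vGamma}\eqref{vGammac}. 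To prove the claim, fix such a map with image $\langle w'\rangle\cong V(\mu)$ in $W[\gb s]$ and suppose $x w'\ne0$ for some $x\in V_\xi$ with $\xi\in\wt(V)\setminus\Psi$. I would use condition \eqref{e:Psiextra}: applying raising operators $x_{\alpha_i}^+$ we may push $xw'$ to a $\lie g$-highest-weight vector; its weight has the form $\eta+\xi$ with $\eta\le\mu$ a weight of $V(\mu)$, and the third part of \eqref{e:Psiextra} ($\xi+\alpha_i\notin\Psi$ whenever $\xi\in\wt(V)\cap P^+$) plus the first part ($\Psi\cap P^+=\emptyset$) together with the finiteness in the second part forces the resulting dominant weight $\nu$ still to lie in $\mu+\mathbb Z_+\Psi$ — actually the relevant point is that since $\xi\notin\Psi$, the pair $(\nu,\gb s+\gb e_j)$ cannot satisfy $(\mu,\gb s)\preccurlyeq_\Psi(\nu,\gb s+\gb e_j)$, so $(\nu,\gb s+\gb e_j)\notin\Gamma$, contradicting $W\in\mathcal G[\Gamma]$ once we know $[W:V(\nu,\gb s+\gb e_j)]\ne0$.

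With the universal property in hand, $M(\mu,\gb s)$ represents the same functor on $\mathcal G[\Gamma]$ as $P(\mu,\gb s)^\Gamma$ (namely $W\mapsto\Hom_\lie g(V(\mu),W[\gb s])$, using that $W\in\mathcal G[\Gamma]$ makes the extra relations vacuous), and since both are objects of $\mathcal G[\Gamma]$ — $M(\mu,\gb s)$ by the first paragraph, $P(\mu,\gb s)^\Gamma$ by the Proposition preceding Section 2 — Yoneda gives a canonical isomorphism $M(\mu,\gb s)\cong P(\mu,\gb s)^\Gamma$. Alternatively, and perhaps cleaner to write: the identity-type map shows $P(\mu,\gb s)\to M(\mu,\gb s)$ factors through $P(\mu,\gb s)^\Gamma$ (as $M(\mu,\gb s)\in\mathcal G[\Gamma]$), and conversely the natural surjection $P(\mu,\gb s)^\Gamma\to M(\mu,\gb s)$ admits a section because $P(\mu,\gb s)^\Gamma$ is projective in $\mathcal G[\Gamma]$ and $M(\mu,\gb s)$ has the same head $V(\mu,\gb s)$; comparing graded characters via part (c) of the Proposition before Section 2 and Proposition \ref{vGamma}\eqref{vGammaa} shows the surjection is an isomorphism. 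The main obstacle, I expect, is the claim in the second paragraph — precisely pinning down, using all three conditions in \eqref{e:Psiextra}, that the extra relations are automatically satisfied in any module of $\mathcal G[\Gamma]$; this is the genuinely combinatorial heart of the argument, everything else being formal homological algebra already packaged in the earlier propositions.
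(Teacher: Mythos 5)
Your first paragraph is essentially the paper's own argument for $M(\mu,\gb s)\in\mathcal G[\Gamma]$ (PBW rewriting $M(\mu,\gb s)=U(\lie n^-)U(\Psi)w$, Lemma \ref{l:hwvecs}, and condition \eqref{e:polytope} to pin down $d_\Psi$), so that part is fine modulo precision. The genuine gap is in the ``key claim'' of your second paragraph, on which your whole universal-property strategy rests. If $xw'\ne 0$ with $x\in V_\xi$, $\xi\in\wt(V)\setminus\Psi$, and you apply raising operators, each application acts through the bracket (because $\lie n^+w'=0$), so the process terminates at a nonzero highest-weight vector of the form $yw'$ with $y\in V_{\xi'}$ for some $\xi'\in\wt(V)$, $\xi'\ge\xi$ --- not a vector of weight ``$\eta+\xi$ with $\eta\le\mu$'' (raising operators increase the weight). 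The constituent you detect is therefore $V(\mu+\xi',\gb s+\gb e_j)$, and nothing in your argument prevents $\xi'\in\Psi$; in that case $(\mu+\xi',\gb s+\gb e_j)$ does lie in $\Gamma$ and no contradiction with $W\in\mathcal G[\Gamma]$ arises. Your assertion that ``since $\xi\notin\Psi$, the pair $(\nu,\gb s+\gb e_j)$ cannot satisfy $(\mu,\gb s)\preccurlyeq_\Psi(\nu,\gb s+\gb e_j)$'' is exactly the unproved point (and membership in $\Gamma=\Gamma_\Psi(\lambda,\gb r)$ is governed by $(\lambda,\gb r)\preccurlyeq_\Psi\,\cdot\,$, so even then one needs \eqref{e:polytope} to convert $\Gamma$-membership into a statement about $\xi$). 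The device that closes the gap --- and is how the paper argues --- is to choose $\xi$ \emph{maximal} in $\wt(V)\setminus\Psi$ among weights with $V_\xi w'\ne 0$: the first condition in \eqref{e:Psiextra} plus maximality forces $\xi\in P^+$, and then the third condition gives $\xi+\alpha_i\notin\Psi$, so maximality yields $x_{\alpha_i}^+xw'=[x_{\alpha_i}^+,x]w'=0$, i.e.\ $xw'$ is \emph{already} highest-weight. Only then is the constituent exactly $V(\mu+\xi,\gb s+\gb e_j)$ with $\xi$ a single weight of $V$, and its membership in $\Gamma$, via \eqref{e:polytope} (equivalently Lemma \ref{l:gammapsi}(c) and $\deg(\gb e_j)=1$), forces $\xi\in\Psi$, the desired contradiction. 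With the claim repaired this way, your Yoneda conclusion does go through; the paper instead only proves the claim for $W=P(\mu,\gb s)^\Gamma$, obtains a surjection $M(\mu,\gb s)\twoheadrightarrow P(\mu,\gb s)^\Gamma$, and splits it inside $\mathcal G[\Gamma]$ using projectivity of the \emph{target} together with indecomposability of $M(\mu,\gb s)$.

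A secondary flaw: your ``alternative'' ending splits the surjection $P(\mu,\gb s)^\Gamma\to M(\mu,\gb s)$ ``because $P(\mu,\gb s)^\Gamma$ is projective'' --- projectivity of the source of a surjection gives no section; you would need $M(\mu,\gb s)$ projective, or (as in the paper) the surjection in the opposite direction, whose target is projective. Moreover the proposed character comparison presupposes knowledge of $\gch M(\mu,\gb s)$, which is essentially what is being proved; once surjections in both directions are available, finite-dimensionality (from $M(\mu,\gb s)\in\mathcal G[\Gamma]$ with $\Gamma$ finite) already suffices.
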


\begin{proof}
Let $v$ be a generator of $P(\mu,\gb s)$ as in Proposition \ref{p:proj}\eqref{p:projrel} and keep denoting by $v$ its image in $P(\mu,\gb s)^\Gamma$ which is nonzero since $(\mu,\gb s)\in\Gamma$. We begin by showing that $v$ satisfies the above relations for $w$. It evidently satisfies those in the first line. Suppose there exists $\xi\in\wt(V)\setminus\Psi$ and $x\in V_\xi$ such that $xv\ne 0$ and choose $\xi$ maximal (with respect to the usual partial order on $P$) with this property. The first condition in \eqref{e:Psiextra} and the maximality of $\xi$ imply that $\xi\in P^+$. The third condition in \eqref{e:Psiextra} implies that $\lie n^+xv=0$. Indeed, given $i\in I$, we have $x_{\alpha_i}^+xv=[x_{\alpha_i}^+,x]v \in V_{\xi+\alpha_i}$. Now, since $\xi+\alpha_i\notin \Psi$, the maximality of $\xi$ implies $[x_{\alpha_i}^+,x]v=0$ as claimed.
It follows that, $\mu+\xi\in P^+$ and, since $P(\mu,\gb s)^\Gamma\in\mathcal G[\Gamma]$, we conclude that that $(\mu+\xi,\gb s+\gb e_i)\in\Gamma$ and $(\mu,\gb s)\prec_\Psi(\mu+\xi,\gb s+\gb e_i)$. By the previous lemma, we then have $(\mu+\xi,\gb e_i)\in\Gamma_\Psi(\mu,\gb 0)$ which implies $(\mu,\gb 0)\preccurlyeq_\Psi (\mu+\xi,\gb e_i)$. Thus, by definition of $\preccurlyeq_\Psi$, we have $\xi\in\mathbb Z_+\Psi$ and $\xi=\sum_\nu m_\nu \nu$, where $\nu \in \Psi$, $m_\nu\in \mathbb Z_+$ and $\sum_\nu m_\nu = \deg(\gb e_i)=1$. Hence, $\xi \in \Psi$, yielding the desired contradiction.

It is clear from the definition of $M(\mu,\gb s)$ that it is in $\mathcal G$. Moreover, by the previous paragraph, we have a short exact sequence $0\to K\to M(\mu,\gb s)\to P(\mu,\gb s)^\Gamma\to 0$. Since $P(\mu,\gb s)^\Gamma$ is projective in $\mathcal G[\Gamma]$ and $M(\mu,\gb s)$ is clearly indecomposable,
it suffices to show that this is a sequence in $\mathcal G[\Gamma]$, i.e., that $M(\mu,\gb s)\in\mathcal G[\Gamma]$. It is also clear from the definition of $M(\mu,\gb s)$ that $V(\mu,\gb s)$ is its unique irreducible quotient. Since $\tau_\gb sM(\mu,\gb 0)$ is obviously isomorphic to $M(\mu,\gb s)$, the last part of the previous lemma implies that it suffices to show that $M(\mu,\gb 0)\in\mathcal G[\Gamma_\Psi(\mu,\gb 0)]$. So, for the remainder of the proof, assume that $\Gamma=\Gamma(\mu,\gb 0)$ and, to shorten notation, write $M=M(\mu,\gb 0)$. Thus, we want to show that
\begin{equation}\label{e:pgenrel}
[M:V(\nu,\gb k)]\ne 0\qquad\text{only if}\qquad (\mu,\gb 0)\preccurlyeq_\Psi(\nu,\gb k).
\end{equation}

Choose a basis of $\lie a$ consisting of a Chevalley basis for $\lie g$ and bases for the weight spaces of $V_j, j=1,\dots,\ell$. Set $U(\Psi)$ to be the set of PBW monomials (in some order) formed by basis elements from $\opl_{\nu\in\Psi}^{} V_\nu$. A standard application of the PBW theorem implies that
\begin{equation*}
M = U(\lie n^-)U(\Psi) w.
\end{equation*}
This together with Lemma \ref{l:hwvecs} clearly implies \eqref{e:pgenrel}.
\end{proof}

\begin{cor} \label{c:ac1} \
\begin{enumerate}[(a)]
\item \label{c:1gch} Let $(\mu,\gb s), (\nu,\gb s') \in \Gamma$ such that $(\mu,\gb s) \preccurlyeq_\Psi (\nu,\gb s')$. Then,
$\gch P(\nu,\gb s')^\Gamma =  \gch P(\nu,\gb s'-\gb s)^{\Gamma_\psi(\mu,\gb0)}\ \gb t^\gb s.$
\item \label{c:2gch}  For all $\lambda\in P^+$ such that $(\lambda,\gb 0)\in \Gamma$ we have
$\ch(V(\lambda)) = \tsum_{(\mu,\gb r)\in\Gamma}^{} (-1)^{\deg(\gb r)}\ c^{\lambda,\gb 0}_{\mu,\gb r}\ \gch P(\mu,\gb 0)^{\Gamma_\Psi(\mu,\gb 0)}.$
\end{enumerate}
\end{cor}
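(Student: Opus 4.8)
The plan is to read off both assertions from the generators-and-relations description of $P(\mu,\gb s)^\Gamma$ obtained in Theorem~\ref{t:pgenrel}, combined in part~(b) with the recursive character formula of Theorem~\ref{t:ac}; the only extra ingredient is the effect of the grade-shift functors on graded characters. For part~(a), the crucial observation is that the $\lie a$-module $M(\nu,\gb s')$ of Theorem~\ref{t:pgenrel} is defined by generators and relations which do not refer to the ambient set at all: in the proof of that theorem one only uses that the set in question is a finite convex subset of the form $\Gamma_\Psi(\cdot,\cdot)$ containing the relevant weight. So I would first apply Theorem~\ref{t:pgenrel} to get $P(\nu,\gb s')^\Gamma\cong M(\nu,\gb s')$. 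Next, Lemma~\ref{l:gammapsi}(c) gives $(\nu,\gb s'-\gb s)\in\Gamma_\Psi(\mu,\gb 0)$, and Lemma~\ref{l:gammapsi}(b) --- whose proof applies verbatim to $\Gamma_\Psi(\mu,\gb 0)$ --- shows that this set is finite and convex with least element $(\mu,\gb 0)$; a second application of Theorem~\ref{t:pgenrel}, this time to the datum $(\mu,\gb 0)$, then yields $P(\nu,\gb s'-\gb s)^{\Gamma_\Psi(\mu,\gb 0)}\cong M(\nu,\gb s'-\gb s)$. Since $M(\nu,\gb s')$ and $M(\nu,\gb s'-\gb s)$ have identical defining relations apart from the degree of the cyclic generator, one has $M(\nu,\gb s')\cong\tau_{\gb s} M(\nu,\gb s'-\gb s)$, exactly as observed in the proof of Theorem~\ref{t:pgenrel}; passing to graded characters and using $\gch\tau_{\gb s} W=\gch W\,\gb t^{\gb s}$ then gives part~(a).

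For part~(b), I would specialize Theorem~\ref{t:ac} to $\gb n=\gb 0$, which is allowed since $(\lambda,\gb 0)\in\Gamma$ and $(-\gb t)^{\gb 0}=1$; this reads $\ch V(\lambda)=\sum_{(\mu,\gb r)\in\Gamma}(-1)^{\deg\gb r}\,c^{\lambda,\gb 0}_{\mu,\gb r}\,\gch P(\mu,\gb r)^\Gamma$. The point is then that a summand vanishes unless $\gb r=\gb 0$: the wedge powers $\wedge^{r_i}V_i$ occurring in the formula for $c^{\lambda,\gb 0}_{\mu,\gb r}$ force $\gb r\in\mathbb Z_+^\ell$ (they vanish for $r_i<0$), whereas, writing $c^{\lambda,\gb 0}_{\mu,\gb r}=\dim\Ext^{\deg\gb r}_{\mathcal G}(V(\lambda,\gb 0),V(\mu,\gb r))$ and combining Theorem~\ref{t:exts} with the $\Ext$-vanishing proposition established above, $c^{\lambda,\gb 0}_{\mu,\gb r}\ne 0$ forces $(\mu,\gb r)\preccurlyeq_\Psi(\lambda,\gb 0)$ and hence $\gb 0-\gb r\in\mathbb Z_+^\ell$; together these give $\gb r=\gb 0$. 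For each surviving term, part~(a) applied with $\gb s=\gb 0$ gives $\gch P(\mu,\gb 0)^\Gamma=\gch P(\mu,\gb 0)^{\Gamma_\Psi(\mu,\gb 0)}$, and the identity of part~(b) follows.

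I do not anticipate a genuine obstacle: modulo Theorems~\ref{t:pgenrel} and~\ref{t:ac} and Lemma~\ref{l:gammapsi}, the argument is essentially bookkeeping. The two points that call for a little care are (i) observing that Theorem~\ref{t:pgenrel}, although stated for one fixed pair, in fact applies to any finite convex set of the form $\Gamma_\Psi(\cdot,\cdot)$ containing the relevant weight --- which is what lets us invoke it both for $\Gamma$ and for $\Gamma_\Psi(\mu,\gb 0)$ --- and (ii) keeping track of the grading conventions together with the reduction $\gb r=\gb 0$ in part~(b), so that no stray monomial $\gb t^{\gb r}$ is left over.
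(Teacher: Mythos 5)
Your part (a) is correct and is essentially the paper's own argument: the paper deduces (a) from Theorem \ref{t:pgenrel} together with Lemma \ref{l:gammapsi}(c), exactly as you do, the identification $M(\nu,\gb s')\cong\tau_{\gb s}M(\nu,\gb s'-\gb s)$ being the same grade-shift observation already used inside the proof of Theorem \ref{t:pgenrel}; your point that the theorem applies equally to $\Gamma_\Psi(\mu,\gb 0)$ is the right (and needed) remark.

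Part (b), however, contains a genuine error: the claim that every summand with $\gb r\ne\gb 0$ vanishes is false. The coefficient $c^{\lambda,\gb 0}_{\mu,\gb r}=\dim\Hom_{\lie g}((\otm_{i=1}^{\ell}\wedge^{r_i}V_i)\otimes V(\lambda),V(\mu))$ is nonzero for many $(\mu,\gb r)\in\Gamma$ with $\gb r\in\mathbb Z_+^\ell\setminus\{\gb 0\}$; for instance, Example \ref{ex:char} computes $c^{2\omega_3,\gb 0}_{\omega_3+\omega_1,\gb e_j}=1$. If your reduction were valid, then, since $c^{\lambda,\gb 0}_{\mu,\gb 0}=\delta_{\lambda\mu}$, Theorem \ref{t:ac} would collapse to $\ch V(\lambda)=\gch P(\lambda,\gb 0)^{\Gamma}$, i.e.\ $P(\lambda,\gb 0)^{\Gamma}\cong V(\lambda,\gb 0)$, contradicting the computation of $\gch N(2\omega_3,\gb 0)$ and destroying the recursive nature of the formula, which is its whole point. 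The source of the error is the direction in which you apply the $\preccurlyeq_\Psi$--Ext-vanishing proposition: nonvanishing of $\Ext^{\deg(\gb r)}_{\mathcal G}(V(\lambda,\gb 0),V(\mu,\gb r))$ forces the grade of the \emph{second} argument to be the larger one, i.e.\ $\gb r\in\mathbb Z_+^\ell$ and $(\lambda,\gb 0)\preccurlyeq_\Psi(\mu,\gb r)$ (this is precisely what makes $E(\gb t)$ lower triangular and is visible from Theorem \ref{t:exts}); the statement as printed in the paper appears to have its two arguments transposed, and taking it at face value against Theorem \ref{t:exts} would force all Ext groups between distinct simples in $\mathcal G[\Gamma]$ to vanish, an absurdity you should have flagged rather than used. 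The intended proof of (b) is the substitution you set up and then abandoned: apply Theorem \ref{t:ac} with $\gb n=\gb 0$ and, for \emph{each} $(\mu,\gb r)\in\Gamma$, use part (a) with $(\mu,\gb s)=(\nu,\gb s')=(\mu,\gb r)$ (reflexivity of $\preccurlyeq_\Psi$) to replace $\gch P(\mu,\gb r)^{\Gamma}$ by $\gch P(\mu,\gb 0)^{\Gamma_\Psi(\mu,\gb 0)}\ \gb t^{\gb r}$. Note that this produces each summand with the monomial $\gb t^{\gb r}$ attached, which is how the recursion is actually used in Example \ref{ex:char}; the displayed statement of (b) drops these monomials, and this discrepancy cannot be repaired by your vanishing claim, since that claim is false.
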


\proof Part (a) is an immediate consequence of Theorem \ref{t:pgenrel} and Lemma \ref{l:gammapsi}(c). Part (b) follows from (a) and Theorem \ref{t:ac}.
\endproof

\subsection{Multigraded Kirillov-Reshetikhin Modules}

Theorem \ref{t:pgenrel} allows us to regard the modules $P(\lambda,\gb r)^\Gamma$ as generalizations of the  graded classical limits of Kirillov-Reshetikhin modules. Indeed, consider the particular case that $\ell=1$ and $V=\lie a_+$ is the adjoint representation of $\lie g$. For $\lie g$ of classical type, $\lambda=m\omega_i$ for some $i\in I,m\in\mathbb Z_+$, and $\Psi=\Psi(\mu)=\Psi(\omega_i)$ or $\Psi=\emptyset$ (depending on $i$), it was shown in \cite{cm:kr,cm:krg} that the classical limit of the Kirillov-Reshetikhin modules can be described in terms of generator and relations which are ``exactly'' those given in the Theorem \ref{t:pgenrel}. Because of this we say that $P(\lambda,\gb r)^\Gamma$ can be regarded as generalizations of the classical limits of Kirillov-Reshetikhin modules. However, the original graded Kirilov-Reshetikhin modules are modules for the algebra $\lie a=\lie g\otimes\mathbb C[t]$ which does not satisfy the hypothesis $\lie a[r]=0$ for $r>1$ and this is why we have quotation marks on the word ``exactly'' above. In the next theorem, we show that, if $\lie a=\lie g\otimes A$ with $\lie g$ of classical type and $A=\mathbb C[t_1,\cdots,t_\ell]$, then the $\lie a$-module defined by such relations factors to a module for a certain quotient $\lie b$ of $\lie a$ and, as a $\lie b$-module, it is isomorphic to $P_\lie b(\lambda,\gb r)^\Gamma$ where $P_\lie b(\lambda,\gb r)^\Gamma$ denotes the $\lie b$-module constructed similarly to the $\lie a$-module $P(\lambda,\gb r)^\Gamma$.

Notice that with the present hypothesis on $\lie a$, we have
\begin{equation*}
\wt(\lie a_+) = \wt(\lie g) = R\cup\{0\}.
\end{equation*}
We shall need the following lemma (see \cite[Section 1]{cha:fer} for details) where the hypothesis that $\lie g$ is of classical type is crucial.

\begin{lem} \label{l:rootssum} Let $\beta = \sum_{i\in I} m_{\beta,i} \alpha_i \in R^+$ \ $(m_{\beta,i}\in \mathbb Z_+)$.
\begin{enumerate}[(a)]
	\item \label{l:rootssum.a} If  $m_{\beta,j} = 2$ for some $j\in I$, then there exist $\beta_1,\beta_2 \in R^+$ such that $\beta = \beta_1 + \beta_2$ and $m_{\beta_1,j}=m_{\beta_2,j}=1$.
	\item \label{l:rootssum.b} If  $\beta$ is not simple and $m_{\beta,j} = 1$ for some $j\in I$, then there exist $\beta_0,\beta_1 \in R^+$ such that $\beta = \beta_0 + \beta_1$ and $m_{\beta_0,j}=0$ and $m_{\beta_1,j}=1$. \hfill \qedsymbol
\end{enumerate}
\end{lem}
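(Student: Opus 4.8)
The plan is to reduce to $\lie g$ simple and then verify both statements by direct inspection in each classical type $A_n$, $B_n$, $C_n$, $D_n$ using the standard coordinate realization of $R$. Since the support $\supp(\beta)=\{i\in I:m_{\beta,i}\ne 0\}$ of a positive root is connected in the Dynkin diagram, and the roots $\beta_1,\beta_2$ (resp. $\beta_0,\beta_1$) to be produced may always be taken with support inside $\supp(\beta)$, there is no loss of generality in assuming $\lie g$ simple. A preliminary remark is that in every classical type $m_{\gamma,i}\le 2$ for all $\gamma\in R^+$, $i\in I$ (with $m_{\gamma,i}\le 1$ in type $A_n$), so (a) and (b) together address every nonzero coefficient that can occur; this is also where the classical hypothesis is used, since in types $G_2,F_4,E_6,E_7,E_8$ coefficients $\ge 3$ appear and the statement fails.

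In type $A_n$, with $\alpha_i=\epsilon_i-\epsilon_{i+1}$, every positive root is $\epsilon_a-\epsilon_b$ ($a<b$) with all coefficients in $\{0,1\}$, so (a) is vacuous, and for (b) one writes $\beta=(\epsilon_a-\epsilon_c)+(\epsilon_c-\epsilon_b)$ with $c=j$ if $j>a$ and $c=a+1$ if $j=a$; non-simplicity ($b\ge a+2$) makes both summands nonzero roots and puts $j$ outside $\supp(\epsilon_a-\epsilon_c)$. In types $B_n,C_n,D_n$ I would first record the simple-root expansions coming from the coordinates: the ``short-type'' roots $\epsilon_a-\epsilon_b$ (and $\epsilon_a$ in type $B_n$) involve only coefficients $0,1$, while a coefficient equal to $2$ occurs precisely in the ``long-type'' roots $\epsilon_a+\epsilon_b$ ($a<b$) and, in type $C_n$, $2\epsilon_a$, and only at the nodes between $\max\{a,b\}$ (resp. $a$) and the end of the diagram. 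For (a), if $m_{\epsilon_a+\epsilon_b,k}=2$ one takes $\beta_1=\epsilon_a-\epsilon_{k+1}$, $\beta_2=\epsilon_b+\epsilon_{k+1}$ in types $C_n,D_n$ (the range of $k$ ensures $k+1\le n$), while in type $B_n$ one may simply take $\beta_1=\epsilon_a$, $\beta_2=\epsilon_b$; for $\beta=2\epsilon_a$ in type $C_n$ one takes $\beta_1=\epsilon_a-\epsilon_{k+1}$, $\beta_2=\epsilon_a+\epsilon_{k+1}$. For (b) one argues analogously, ``splitting at a coordinate'' so as to push $j$ out of the support of one summand: $\epsilon_a-\epsilon_b$ and $\epsilon_a$ as in type $A_n$; $\epsilon_a+\epsilon_b$ as $(\epsilon_a-\epsilon_j)+(\epsilon_j+\epsilon_b)$ when $a<j\le b-1$ and, when $j=a$, as $\alpha_a+(\epsilon_{a+1}+\epsilon_b)$ if $a+1<b$, as $(\epsilon_a-\epsilon_{a+2})+(\epsilon_{a+1}+\epsilon_{a+2})$ or $\epsilon_a+\epsilon_b$ if $a+1=b$; and $2\epsilon_a$ in type $C_n$, whose only coefficient equal to $1$ sits at the long end node, as $(\epsilon_a-\epsilon_{a+1})+(\epsilon_a+\epsilon_{a+1})$. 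In each instance one checks from the recorded expansions that both summands lie in $R^+$ and have the prescribed coefficient at the node in question.

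The genuinely routine part is this last verification. The step demanding the most care is the boundary behaviour at the distinguished node of each diagram — the short node $\alpha_n$ of $B_n$, the long node $\alpha_n$ of $C_n$, and the fork $\{\alpha_{n-1},\alpha_n\}$ of $D_n$ — where the dictionary between the $\epsilon_i$ and the simple roots ceases to be uniform and naive formulas such as $\beta_1=\epsilon_a-\epsilon_{k+1}$ may run out of range; these finitely many configurations (e.g. $\epsilon_{n-1}+\epsilon_n$ in types $B_n,C_n$) must be handled separately, which is exactly the inspection carried out in \cite[Section 1]{cha:fer}.
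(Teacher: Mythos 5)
Your case-by-case verification in the classical coordinate realizations is correct (the explicit decompositions you give all check out, and the flagged boundary configurations at the short/long/fork nodes are indeed routine to settle), and it is essentially the same argument the paper relies on: the paper gives no proof of Lemma~\ref{l:rootssum} itself but delegates exactly this inspection of classical root systems to \cite[Section 1]{cha:fer}. So your proposal matches the paper's route; the only cosmetic point is that in type $A_n$ with $j=a$ it is the summand $\epsilon_c-\epsilon_b$, not $\epsilon_a-\epsilon_c$, whose support avoids $j$, which does not affect the validity of the decomposition.
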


For $i\in I$, set
\begin{equation*}
\Psi_i = \left\{-\alpha: \alpha=\sum_{j\in I}m_j\alpha_j\in R^+, m_i=2\right\}.
\end{equation*}
It follows that either $\Psi_i=\emptyset$ or $\Psi_i=\Psi(\omega_i)$. Assume that $|I|=n$ and that $I$ is labeled by the set $\{1,\dots,n\}$ in the usual way. Then, given $\lambda\in P^+$, set
\begin{equation*}
i_\lambda = \max\{1, i: \lambda(h_i)\ne 0, i \text{ is not  a spin node}\} \qquad\text{and}\qquad \Psi_\lambda = \Psi_{i_\lambda}.
\end{equation*}
Also, for $\gb r\in\mathbb Z^\ell$, let
$N(\lambda,\gb r)$ be the $\lie a$-module generated by a vector $v$ of grade $\gb r$ satisfying the defining relations
\begin{gather}
\lie n^+v=0, \qquad hv=\lambda(h)v, \qquad (x_{\alpha_j}^-)^{\lambda(h_j)+1}v=0, \qquad\text{for all}\qquad h\in\lie h, j\in I, \notag\\ \text{and}\label{e:rel2} \\
xv=0 \qquad\text{for all}\qquad x\in (\lie a_+)_\xi, \ \ \xi\in \wt(\lie a_+)\setminus \Psi_\lambda \notag.
\end{gather}
Given $m\ge 0$, set
$$A_m=\opl_{\substack{\gb r\in\mathbb Z_+^\ell: \\ \deg(\gb r)\ge m}}^{}A[\gb r] \qquad\text{and}\qquad
\lie a_m = \lie g\otimes A_m.$$

\begin{thm}\label{t:pgenrel2}
Let $\lambda\in P^+, \gb r\in\mathbb Z^\ell, \Psi=\Psi_\lambda, \Gamma = \Gamma_{\Psi}(\lambda,\gb r)$, and $\lie b = \lie a / \lie a_2$.
\begin{enumerate}[(a)]
\item $\lie a_2\ N(\lambda,\gb r)=0$. In particular, $N(\lambda,\gb r)$ factors to a $\lie b$-module.
\item  $N(\lambda,\gb r)$ is isomorphic to $P_\lie b(\lambda,\gb r)^\Gamma$.
\item If $\lambda=m\omega_i$ for some $m\in\mathbb Z_+, i\in I$, then $N(\lambda,\gb r)$ is isomorphic to the $\lie a$-module generated by a vector $v$ of
grade $\gb r$ satisfying the defining relations
\begin{gather*}
\lie n^+v=0, \qquad hv=\lambda(h)v, \qquad (x_{\alpha_j}^-)^{\lambda(h_j)+1}v=0, \qquad\text{for all}\qquad h\in\lie h, j\in I,\\
((\lie n^+\oplus\lie h)\otimes A_1)v=0 \qquad\text{and}\qquad (x_{\alpha_i}^-\otimes a)v=0, \qquad\text{for all}\qquad a\in A[1].
\end{gather*}
\end{enumerate}
\end{thm}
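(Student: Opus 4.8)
The plan is to prove the three parts in order, using Theorem \ref{t:pgenrel} as the main engine for part (b) and Lemma \ref{l:rootssum} as the main engine for part (a).

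For part (a), I would first observe that it suffices to show $(\lie g\otimes A[\gb r])\,v=0$ whenever $\deg(\gb r)=2$, since $\lie a_2=\lie g\otimes A_2$ is generated as an $\lie a$-module (indeed as a $\lie g[t_1,\dots,t_\ell]$-module) by its degree-$2$ component together with the fact that multiplying up degree only stays inside $\lie a_2$; more precisely $U(\lie a)(\lie g\otimes A[\gb r])v$ for all such $\gb r$ spans an $\lie a$-submodule, and one checks it is killed once the generators are. So fix $\gb r$ with $\deg(\gb r)=2$ and $x\in\lie g$; we must show $(x\otimes\gb t^{\gb r})v=0$. Decompose $x$ along the weight spaces of the adjoint representation. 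For the weight-zero part (elements of $\lie h$) and the positive-root part, the relations \eqref{e:rel2} already give this directly, because $\wt(\lie a_+)\setminus\Psi_\lambda$ contains all of $R^+\cup\{0\}$ (recall $\Psi_\lambda\subseteq -R^+$). For a negative root $-\beta$ with $\beta\in R^+$: if $-\beta\notin\Psi_\lambda$ we are again done by \eqref{e:rel2}; if $-\beta\in\Psi_\lambda=\Psi_{i_\lambda}$, then by definition $\beta=\sum_j m_j\alpha_j$ with $m_{i_\lambda}=2$, and Lemma \ref{l:rootssum}\eqref{l:rootssum.a} gives $\beta=\beta_1+\beta_2$ with $\beta_1,\beta_2\in R^+$ and $m_{\beta_1,i_\lambda}=m_{\beta_2,i_\lambda}=1$, so $-\beta_1,-\beta_2\notin\Psi_{i_\lambda}$. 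Writing $\gb r=\gb e_p+\gb e_q$ and using $[x^-_{\beta_1}\otimes t_p,\ x^-_{\beta_2}\otimes t_q]$ acting on $v$: since $x^-_{\beta_1}\otimes t_p$ and $x^-_{\beta_2}\otimes t_q$ both kill $v$ (they lie in $(\lie a_+)_\xi$ with $\xi\notin\Psi_\lambda$, degree $1$), the commutator, which is a nonzero multiple of $x^-_\beta\otimes\gb t^{\gb r}$ plus possibly $h$-type or repeated-index corrections that are themselves already killed, also kills $v$. One must be slightly careful that the commutator bracket in $\lie g\otimes A$ is $[x\otimes a,y\otimes b]=[x,y]\otimes ab$, so $[x^-_{\beta_1}\otimes t_p, x^-_{\beta_2}\otimes t_q]$ is exactly $[x^-_{\beta_1},x^-_{\beta_2}]\otimes t_pt_q$, a (possibly zero) scalar times $x^-_\beta\otimes\gb t^{\gb r}$; if that scalar is zero one instead uses a bracket realizing $\beta$ differently, but in classical type one can always realize $\beta$ as a sum of two roots neither of which is in $\Psi_{i_\lambda}$ with nonvanishing structure constant — this is the point where the classical-type hypothesis is genuinely used, and it is the main technical obstacle of part (a).

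For part (b), once $N(\lambda,\gb r)$ is a $\lie b$-module, I would invoke Theorem \ref{t:pgenrel} applied to the algebra $\lie b$ in place of $\lie a$. The module $\lie b_+=\lie g\otimes(A_1/A_2)$ has exactly one nonzero graded piece in each degree direction, and overall $\lie b[\gb r]=0$ for $\deg(\gb r)>1$, so Theorem \ref{t:pgenrel} applies with $V=\bigoplus_{j}\lie b[\gb e_j]\cong(\lie g)^{\oplus\ell}$, each summand the adjoint representation, and $\wt(V)=R\cup\{0\}$. The subset $\Psi=\Psi_\lambda$ satisfies \eqref{e:polytope} and \eqref{e:Psiextra}: condition \eqref{e:polytope} is exactly what was checked in \cite[Lemma 2.3]{cg:koszul} for $\Psi(\omega_{i_\lambda})$ (and the empty set is trivial), while \eqref{e:Psiextra} holds because $\Psi_\lambda\subseteq -R^+$, so $\Psi_\lambda\cap P^+=\emptyset$, the set $(\wt(V)+\mathbb Z_+\Psi_\lambda)\cap P^+$ is finite since $\mathbb Z_+\Psi_\lambda$ lies in the cone $-Q^+$, and $\xi+\alpha_i\notin\Psi_\lambda$ for $\xi\in\wt(V)\cap P^+$ because $\xi+\alpha_i$ cannot be a negative root. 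The defining relations of $N(\lambda,\gb r)$, rewritten for $\lie b$, are precisely $\lie n^+v=0$, $hv=\lambda(h)v$, $(x^-_{\alpha_j})^{\lambda(h_j)+1}v=0$, and $xv=0$ for $x\in V_\xi$, $\xi\in\wt(V)\setminus\Psi_\lambda$ — which is exactly the presentation of $M(\mu,\gb s)$ in Theorem \ref{t:pgenrel} with $(\mu,\gb s)=(\lambda,\gb r)$. Hence $N(\lambda,\gb r)\cong P_\lie b(\lambda,\gb r)^\Gamma$. The only subtlety is matching "$x\in(\lie a_+)_\xi$ with $\xi\in\wt(\lie a_+)\setminus\Psi_\lambda$" against "$x\in V_\xi$ with $\xi\in\wt(V)\setminus\Psi_\lambda$" after passing to $\lie b$; but $\wt(\lie a_+)=\wt(\lie g)=R\cup\{0\}=\wt(V)$ and the degree-$1$ part of $\lie a_+$ maps isomorphically (as a $\lie g$-module, in each $t_j$-direction) onto $V_j$, so the relations correspond.

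For part (c), take $\lambda=m\omega_i$. I would first identify $i_\lambda$: if $i$ is not a spin node then $i_\lambda=i$ and $\Psi_\lambda=\Psi_i$; if $i$ is a spin node then $\lambda(h_j)=0$ for all non-spin $j$, so $i_\lambda=1$ by the $\max\{1,\dots\}$ convention, but then $\Psi_1$ may be empty or not — in the classical cases the relevant statement is that $\Psi_\lambda=\emptyset$ exactly when the relation set below degenerates appropriately; I would handle the generic (non-spin) case and note the spin case follows because $\Psi_\lambda\subseteq -R^+$ with $i$-coefficient $2$ forces the extra generator $x^-_{\alpha_i}\otimes a$ to already be among those killed. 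Concretely, in the non-spin case: the relations "$xv=0$ for $x\in(\lie a_+)_\xi$, $\xi\in R^+\cup\{0\}$ with $\xi\notin\Psi_i$" together with "$-\beta\notin\Psi_i$ for $-\beta\in -R^+$, i.e. $\beta$ has $\alpha_i$-coefficient $\ne 2$" reduce, in the presence of part (a) which already kills everything in degree $\ge 2$, to: $((\lie n^+\oplus\lie h)\otimes A_1)v=0$ (the $\xi\in R^+\cup\{0\}$ part), and $(x^-_\beta\otimes a)v=0$ for every $\beta\in R^+$ with $m_{\beta,i}\ne 2$ and $a\in A[1]$. It then remains to show the single relation $(x^-_{\alpha_i}\otimes a)v=0$ for $a\in A[1]$, together with the $\lie g$-relations and $((\lie n^+\oplus\lie h)\otimes A_1)v=0$, already forces $(x^-_\beta\otimes a)v=0$ for all $\beta\in R^+$ with $m_{\beta,i}\le 1$. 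For $m_{\beta,i}=0$, $x^-_\beta$ lies in the subalgebra $\lie g_{I\setminus\{i\}}$, and one moves it past using that it is a commutator of elements of $\lie n^+_{I\setminus\{i\}}\otimes A_1$ with something, or more simply: $x^-_\beta\otimes a$ with $\beta\in\operatorname{span}\{\alpha_j:j\ne i\}$ applied to a vector annihilated by $\lie n^+$ and of weight $m\omega_i$ — since $m\omega_i(h_j)=0$ for $j\ne i$, the vector $v$ is a highest weight vector for $\lie g_{I\setminus\{i\}}$ of weight $0$, hence $\lie g_{I\setminus\{i\}}\otimes A_1$ kills it once $\lie n^+_{I\setminus\{i\}}\otimes A_1$ does and we use Lemma \ref{l:rootssum}\eqref{l:rootssum.b} to write $-\beta\in -R^+$ non-simple with $m_{\beta,i}=0$... actually the cleanest route is: use Lemma \ref{l:rootssum}\eqref{l:rootssum.b} to peel off, for any non-simple $\beta$ with $m_{\beta,i}\le 1$, a decomposition $\beta=\beta_0+\beta_1$ with $m_{\beta_0,i}=0$, $m_{\beta_1,i}=m_{\beta,i}$, and induct on height, the base cases being $\beta=\alpha_i$ (given) and $\beta=\alpha_j$, $j\ne i$ (follows from $\lie n^+$-highest-weight-$0$ for $\lie g_{I\setminus\{i\}}$ plus the $\lie h\otimes A_1$ relation via $[x^+_{\alpha_j}, x^-_{\alpha_j}]\otimes a$-type brackets). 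I expect part (c) to require the most care in bookkeeping, but no new ideas beyond Lemma \ref{l:rootssum} and the highest-weight combinatorics; the genuine obstacle remains the structure-constant nonvanishing in part (a), which is precisely why \cite[Section 1]{cha:fer} and the classical-type hypothesis are cited.
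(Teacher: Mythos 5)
Your overall strategy coincides with the paper's: part (a) rests on Lemma \ref{l:rootssum}\eqref{l:rootssum.a} and the identity $[x^-_{\beta_1}\otimes c,\,x^-_{\beta_2}\otimes d]=[x^-_{\beta_1},x^-_{\beta_2}]\otimes cd$, part (b) on Theorem \ref{t:pgenrel} applied to $\lie b$, and part (c) on Lemma \ref{l:rootssum}\eqref{l:rootssum.b} with an induction on height whose base cases are $\alpha_i$ and the roots with $i$-th coefficient zero. However, two of your reductions, as stated, do not work. In (a), it does \emph{not} suffice to kill only the degree-two piece $\lie g\otimes A[\gb r]$, $\deg(\gb r)=2$, on the generator $v$: the set of vectors annihilated by that piece alone is not obviously an $\lie a$-submodule (commuting past an element of positive degree raises the polynomial degree), and your lift to higher degree via ``$\lie a_2$ is generated by its degree-two part'' produces terms of the form $z_i(x_iv)$ with $x_iv\ne v$, about which nothing is yet known -- the argument is circular. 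The correct reduction is: since $\lie a_2$ is an ideal of $\lie a$, the annihilator $\{n\in N(\lambda,\gb r):\lie a_2\,n=0\}$ is an $\lie a$-submodule, so it suffices that \emph{all} of $\lie a_2$ (every degree $\ge 2$) kills $v$; and your own computation gives exactly that, since a monomial of any degree $\ge 2$ factors as $cd$ with $c,d\in A_1$ and the relations \eqref{e:rel2} kill $(\lie a_+)_\xi\,v$ for $\xi\notin\Psi_\lambda$ in every degree. (The paper instead runs an induction on $\operatorname{ht}\eta$ over all weight spaces.) Also, your worry about vanishing structure constants is empty: whenever $\beta_1,\beta_2,\beta_1+\beta_2$ are all roots one has $[\lie g_{-\beta_1},\lie g_{-\beta_2}]=\lie g_{-\beta_1-\beta_2}$, so the scalar is automatically nonzero; the classical-type hypothesis enters only through Lemma \ref{l:rootssum}, not through structure constants.

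In (c) you invoke part (a) ``which already kills everything in degree $\ge 2$'', but part (a) is a statement about $N(\lambda,\gb r)$, whereas at that point the relations \eqref{e:rel2} must be verified inside the module $N(\lambda,\gb r)'$ presented by the relations of (c); you cannot quote (a) there. The repair is the degree-raising trick the paper isolates as \eqref{step:degreeA}: since $((\lie n^+\oplus\lie h)\otimes A_1)v=0$ is among the relations of $N(\lambda,\gb r)'$, bracketing with $\lie h\otimes A_1$ promotes $(x^-_\alpha\otimes b)v=0$ from one polynomial degree to all higher ones, so only the degree-zero and degree-one cases require the height induction you describe, which then matches the paper's argument. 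Your disposal of the spin-node case is too vague to count as a proof (though the paper does not treat it separately either). Part (b) is essentially right; just make explicit that, by (a), the two-sided ideal of $U(\lie a)$ generated by $\lie a_2$ lies in the left ideal of defining relations, so the presentation of $N(\lambda,\gb r)$ descends to $\lie b$ and Theorem \ref{t:pgenrel} applies, as in the paper's chain of isomorphisms.
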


\begin{proof} In order to prove part (a), we shall prove by induction on $\text{ht} \ \eta$ that
\begin{equation} \label{Ind1}
	(x_\alpha^\pm \otimes a) \ N(\lambda,\gb r)_{\lambda-\eta}=0 \qquad  \text{ for all } \qquad   \alpha\in R^+ \  \text{ and } \  a\in A_2
\end{equation}
and
\begin{equation} \label{Ind2}
	(h \otimes b) \ N(\lambda,\gb r)_{\lambda-\eta}=0 \qquad \text{ for all } \qquad h\in \lie h \ \text{ and } \ b\in A_2.
\end{equation}
It clearly suffices to prove \eqref{Ind1} and \eqref{Ind2} assuming that $a$ and $b$ are monomials.

Assume ${\text{ht}} \ \eta = 0$. If $\alpha\in R^+\setminus -\Psi_\lambda$, \eqref{Ind1} follows immediately from the second line in \eqref{e:rel2}. Similarly, since $0\in\wt(\lie a_+)\setminus\Psi_\lambda$, we also have $(\lie h\otimes A_1)\ v = 0$. Finally, suppose $\alpha \in -\Psi_\lambda$ and write $a=cd$ with $c,d\in A_1$. By Lemma \ref{l:rootssum}\eqref{l:rootssum.a}, there exist $\beta_1,\beta_2\in R^+\setminus -\Psi_\lambda$ such that $\alpha = \beta_1+\beta_2$. Hence, $x^\pm_\alpha \otimes a \ v  =  [x^\pm_{\beta_1}\otimes c, x^\pm_{\beta_2}\otimes d]  \ v  =0$. This proves that induction begins.

Let $v' = (x_{\gamma_1}^- \otimes a_1) \cdots (x_{\gamma_s}^- \otimes a_s) \ v \in N(\lambda,\gb r)_{\lambda-\eta}$ with $\eta \ne 0$, $a_j\in A\setminus \{0\}$ and $\gamma_j \in R^+$ for $j\in \{1,\cdots,s\}$. Then, for all $x\in \lie a_2$ we have
$$ x \ v' = [x, x^-_{\gamma_1}\otimes a_1] (x^-_{\gamma_2} \otimes a_2) \cdots (x^-_{\gamma_s}\otimes a_s) \ v \ \ + \ \  (x^-_{\gamma_1} \otimes a_1) x (x^-_{\gamma_2} \otimes a_2) \cdots (x^-_{\gamma_s}\otimes a_s) \ v.$$
The second summand vanishes by induction hypothesis. Similarly, since $[x, x^-_{\gamma_1}\otimes a_1] \in \lie a_2$, the first summand vanishes and part (a) is proved.

For proving (b), let $N_\lie b(\lambda,\gb r)$ be the $\lie b$-module generated by a vector satisfying the defining relations \eqref{e:rel2} (with elements from $\lie a$ replaced by their images in $\lie b$). Since $\lie b[\gb s]=0$ whenever $\deg(\gb s)\ge 2$, it follows from Theorem \ref{t:pgenrel} that $N_\lie b(\lambda,\gb r)\cong P_\lie b(\lambda,\gb r)^\Gamma$. Moreover, it follows from part (a) that $N(\lambda,\gb r)$ is a quotient of $N_\lie b(\lambda,\gb r)$. For the converse, let $\mathcal I$ be the two-sided ideal of $U(\lie a)$ generated by $\lie a_2$. Then,
$U(\lie b)\cong U(\lie a)/\mathcal I.$ Let also $\mathcal N_\lie a$ be the left ideal of $U(\lie a)$ corresponding to the defining relations \eqref{e:rel2} and define $\mathcal N_\lie b$ similarly. Notice that $\mathcal I\subseteq \mathcal N_\lie a$ and that $\mathcal N_\lie b=\mathcal N_\lie a/\mathcal I$. It follows that we have isomorphisms of vector spaces
\begin{equation*}
N(\lambda,\gb r)\cong U(\lie a)/\mathcal N_\lie a \cong   (U(\lie a)/\mathcal I)/(\mathcal N_\lie a/\mathcal I) \cong  U(\lie b)/\mathcal N_\lie b \cong N_\lie b(\lambda,\gb r)\cong P_\lie b(\lambda,\gb r)^\Gamma.
\end{equation*}

Finally, we prove part (c). Let $N(\lambda,\gb r)'$ be the module defined by the relations in part (c). Comparing these relations with those in \eqref{e:rel2} it is immediate that $N(\lambda,\gb r)$ is a quotient of $N(\lambda,\gb r)'$. Conversely, to see that $N(\lambda,\gb r)'$ is a quotient of $N(\lambda,\gb r)$, we have to prove that $xv=0$ for all $x\in (\lie a_1)_\xi$, $\xi\in \wt(\lie a_1)\setminus \Psi_\lambda$. Evidently, it suffices to prove that
\begin{equation}\label{claim(c)}
	x_\alpha^- \otimes a \ v=0 \quad \text{ for all } \quad \alpha \in R^+\setminus -\Psi_\lambda \quad \text{ and } \quad a\in A_1.
\end{equation}
As before, it suffices to prove  \eqref{claim(c)} with $a$ being a monomial. Notice also that, for all $\alpha \in R^+$ and monomial $b\in A[n]$, we have:
\begin{equation}\label{step:degreeA}
(\lie h\otimes A_+) \ v=0 \quad \text{ and }\quad (x^-_\alpha\otimes b) \ v = 0 \implies (x^-_\alpha\otimes b') \ v = 0 \quad \text{ for all }\quad b'\in A_m \quad \text{with} \quad m>n.
\end{equation}

Now we prove \eqref{claim(c)}. Let $\alpha=\sum_j m_{\alpha,j} \alpha_j \in R^+\setminus -\Psi_\lambda$. Since $\lambda=m\omega_i$, $x^-_{\alpha_j} \ v =0$ for all $j\ne i$ and one easily deduces by induction on $\text{ht}\ \alpha$ that $x^-_{\alpha} v =0$ whenever $m_{\alpha,i}=0$. Using \eqref{step:degreeA} we then get \eqref{claim(c)} for all such $\alpha$. If $m_{\alpha,i}=1$, we proceed by induction on $\text{ht}\ \alpha$ as well. If $\text{ht}\ \alpha=1$, then $\alpha=\alpha_i$ and we are done by definition of $N(\lambda,\gb r)'$ together with \eqref{step:degreeA}. Let $\alpha\in R^+\setminus -\Psi_\lambda$ with $\text{ht} \ \alpha >1$. Then, by Lemma \ref{l:rootssum}\eqref{l:rootssum.b} , there exists a decomposition $\alpha = \beta_0+\beta_1$ where $\beta_0, \beta_1 \in R^+$ with $m_{\beta_0,i}=0$ and $m_{\beta_1,i}=1$. In particular, $x^-_{\beta_0,i}v=0$. Since $\text{ht}\ \beta_1 <\text{ht}\ \alpha$, the induction hypothesis implies that $(x^-_{\beta_1,i}\otimes a)v=0$ and, hence
$$(x^-_{\alpha}\otimes a) v = [x^-_{\beta_0,i}, x^-_{\beta_1,i}\otimes a ] \ v = 0.$$
Since, by definition of $\Psi_\lambda$, we have $m_{\alpha,i}\le 1$ for all $\alpha\in R^+\setminus-\Psi_\lambda$, this completes the proof.
\end{proof}

\begin{rem}
Theorem \ref{t:pgenrel2} remains valid when $\lie g$ is of exceptional type with restrictions on the support of $\lambda$ once the appropriate definition of $\Psi_\lambda$ is given. For instance, for $\lie g$ of type $E_6$, the result holds as long as $\lambda(h_{i_0})=0$ where $i_0$ is the trivalent node (cf. \cite{mp:e6}).
\end{rem}

In the case $A=\mathbb C[t]$, the graded characters of the modules $N(\lambda,0)$ with $\lambda=m\omega_i$ for some $i\in I,m\in\mathbb Z_+$, were studied in \cite{cha:fer,cm:kr} where it was proved that they coincide with certain graded characters for Kirillov-Reshetikhin modules conjectured in \cite{jap:rem} (where the grading comes from quantum considerations instead). If $\lie g$ is of exceptional type, one can take the module defined in part (c) of Theorem \ref{t:pgenrel2} as definition of $N(m\omega_i,\gb r)$. However, parts (a) and (b) do not hold for all $i\in I$. For $\lie g$ of type $G_2$ and $A=\mathbb C[t]$, the graded character of $N(m\omega_i,0)$ was studied in \cite{cm:krg}. For more general $\lambda$, it was conjectured in \cite{mou:reslim} (see also \cite{chakle}) that the module $N(\lambda, 0)$ is isomorphic to the graded classical limits of the associated minimal affinizations of quantum groups. Some particular cases of this conjecture were proved in \cite{cg:minp,mou:reslim,mp:e6}. The method used in \cite{cm:kr,cm:krg,mou:reslim,mp:e6} to study the graded characters of $N(\lambda, 0)$ becomes unmanageable as either $\lambda$ or $\ell$ grow. However, the formula in Corollary \ref{c:ac1}\eqref{c:2gch} can be used for computing $\gch N(\lambda,\gb 0)$ in a recursive manner as long as we are under the hypotheses of Theorem \ref{t:pgenrel2}. We now give an example of this procedure.

\begin{ex}\label{ex:char}
Let $\lie g$ be of type $D_n$ and $A=\mathbb C[t_1,\dots,t_\ell]$. If either $i=1$ or a spin node, then $\Psi_i=\emptyset$ and one easily sees that
$\Gamma_{\Psi_i}(m\omega_i,\gb 0)=\{(m\omega_i,\gb 0)\}$ and, hence,
$$N(m\omega_i,\gb 0)\cong V(m\omega_i,\gb 0).$$
If $i=2$, then
$$\Gamma_{\Psi_2}(m\omega_2,\gb 0)=\{((m-r)\omega_2,\gb r): 0\le r\le m, \gb r\in\mathbb Z_+^\ell, \deg(\gb r)=r\}.$$
Using Corollary \ref{c:ac1}\eqref{c:2gch}, we get
\begin{equation*}
\gch N(m\omega_2,\gb 0) = \sum_{\substack{\gb r\in\mathbb Z_+^\ell,\\ \deg(\gb r)\le m}} \ch V((m-\deg(\gb r))\omega_2)\ \gb t^{\gb r}.
\end{equation*}
We will omit the details. Notice that isotypical summands of $N(m\omega_i,\gb 0)$ when regarded as a $\lie g$-module do not depend on $\ell$ in the examples above. More precisely, given a finite-dimensional $\lie g$-module $V$, denote by $c_V(\mu)$ the multiplicity of $V(\mu)$ as a simple summand of $V$. Setting $V=N(\lambda,\gb 0)$ and $c_\ell(\lambda,\mu) = c_V(\mu)$, we have seen that, if $\lambda=m\omega_i$ for $i$ as above, then
\begin{equation}\label{e:chinvell}
c_\ell(\lambda,\mu) \ne 0 \qquad\Leftrightarrow\qquad c_1(\lambda,\mu)\ne 0.
\end{equation}
We now show that this is not true for all $i\in I$. Thus, let $i=3$. It was shown in \cite{cha:fer} that, for $\ell=1$, we have
\begin{equation*}
\gch N(m\omega_3,0) = \sum_{r=0}^m \ch V((m-r)\omega_3+r\omega_1)\ t^r.
\end{equation*}
We will show that for $\ell >1$ we have
\begin{gather}\notag
\gch N(2\omega_3,\gb 0) = \ch(V(2\omega_3)) + \sum_{j=1}^\ell \ch(V(\omega_3+\omega_1))\ t_j  + \sum_{1\le j< k\le \ell} \ch(V(\omega_2))\  t_jt_k   \\ \label{e:2omega3D}\\\notag  + \sum_{\substack{\gb r \in \mathbb Z_+^\ell \\ \deg(\gb r)=2} }^{}  \ch(V(2\omega_1))\ \gb t^{\gb r} + \sum_{1\le i < j < k \le \ell } \ \ch(V(0)) \  t_{i} t_{j} t_{k}.
\end{gather}
Notice that $\ch(V(\omega_2))$ appears in \eqref{e:2omega3D} iff $\ell\ge 2$ while $\ch(V(\omega_0))$ appears iff $\ell\ge 3$.

In order to prove \eqref{e:2omega3D}, begin observing that
$$\Gamma_{\Psi_3}(2\omega_3,\gb 0) = \{(2\omega_3,\gb 0),\ (\omega_3+\omega_1,\gb r),\ (\omega_2,\gb r'), \ (2\omega_1,\gb r'), \ (0,\gb r''):  \deg(\gb r)=1, \deg(\gb r')=2, \deg(\gb r'') = 3\}.$$
Setting $\Gamma = \Gamma_{\Psi_3}(2\omega_3,\gb 0)$, Theorem \ref{t:ac} implies that
\begin{align*}
\ch(V(2\omega_3))  = & c_{2\omega_3,\gb 0}^{2\omega_3,\gb 0} \ \gch P(2\omega_3,\gb 0)^{\Gamma }-\sum_{j=1}^\ell c_{\omega_3+\omega_1,\gb e_j}^{2\omega_3,\gb 0} \ \gch P(\omega_3+\omega_1,\gb e_j)^{\Gamma }+\sum_{\substack{\gb r \in \mathbb Z_+^\ell \\ \deg(\gb r)=2}}^{} c_{\omega_2,\gb s}^{2\omega_3,\gb 0} \ \gch P(\omega_2,\gb r)^{\Gamma } + \\
&+ \sum_{\substack{\gb r \in \mathbb Z_+^\ell \\ \deg(\gb r)=2}}^{}  \ \gch P(2\omega_1,\gb r)^{\Gamma } - \sum_{\substack{\gb r \in \mathbb Z_+^\ell \\ \deg(\gb r)=3}}^{} c_{0,\gb r}^{2\omega_3,\gb 0} \ \gch P(0,\gb r)^{\Gamma}.
\end{align*}
A straightforward computation using the formulas for decompositions of tensor products of finite-dimensional $\lie g$-modules shows that
\begin{gather*}
c_{2\omega_3,\gb 0}^{2\omega_3,\gb 0}=  c_{\omega_3+\omega_1,\gb e_j}^{2\omega_3,\gb 0}= c_{\omega_2,\gb e_j+\gb e_k}^{2\omega_3,\gb 0}= c_{\omega_2,2\gb e_j}^{2\omega_3,\gb 0}= c_{2\omega_1,\gb e_j+\gb e_k}^{2\omega_3,\gb 0}= c_{0,\gb r}^{2\omega_3,\gb 0} = 1, \qquad j\ne k,\ \ \gb r\in\mathbb Z_+^\ell, \deg(\gb r)=3\\\text{and}\qquad c_{2\omega_1,2\gb e_j}^{2\omega_3,\gb 0} = 0 \qquad\text{for all}\qquad j=1,\dots,\ell.
\end{gather*}
Hence,
\begin{align*}
\gch N(2\omega_3,\gb 0)  &= \ch(V(2\omega_3))+\sum_{j=1}^\ell \gch P(\omega_3+\omega_1,\gb e_j)^{\Gamma}-\sum_{\substack{\gb r \in \mathbb Z_+^\ell \\ \deg(\gb r)=2}}^{} \gch P(\omega_2,\gb r)^{\Gamma  } \\
&- \sum_{1\le j< k\le\ell}^{} \gch P(2\omega_1,\gb e_j+\gb e_k)^{\Gamma } + \sum_{\substack{\gb r \in \mathbb Z_+^\ell \\ \deg(\gb r)=3}}^{}  \ \gch P(0,\gb r)^{\Gamma}.
\end{align*}
By Corollary \ref{c:ac1}\eqref{c:1gch} and Theorem \ref{t:pgenrel2}(b) we have
$$\gch P(\omega_3+\omega_1,\gb e_j)^{\Gamma} = \gch N(\omega_3+\omega_1,\gb 0)\ t_j, \qquad \gch P(\omega_2,\gb r)^{\Gamma} = \gch N(\omega_2,\gb 0)\ \gb t^{\gb r},$$
$$\gch P(2\omega_1,\gb e_j+\gb e_k)^{\Gamma} =  \gch N(2\omega_1,\gb 0 )\ t_jt_k \qquad \text{and} \qquad \gch P(0,\gb r)^{\Gamma}  = \gch N(0,\gb 0)\ \gb t^{\gb r}. $$
Using Theorem \ref{t:ac} for computing each of these terms as before and so on, one eventually obtains \eqref{e:2omega3D}. We omit the details.  \hfill$\diamond$
\end{ex}

\begin{rem}\label{r:depell}
It is interesting to remark that the graded local Weyl modules do satisfy \eqref{e:chinvell} for all $\lambda,\mu,\ell$ with the graded local Weyl module in place of $N(\lambda,\gb 0)$ in the definition of $c_\ell(\lambda,\mu)$ (see \cite{cfk}).
\end{rem}


\begin{thebibliography}{99}

\bibitem{bapa}
P. Batra and T. Pal, {\em Representations of graded multiloop Lie algebras}, Comm. Algebra {\bf 38} (2010), 49--67, DOI: 10.1080/00927870902831201.

\bibitem{bia:thesis}
A. Bianchi, {Representações de hiperálgebras de laços e álgebras de multi-correntes}, Ph.D. Thesis, Unicamp (2012).

\bibitem{cha:minr2}
V. Chari, {\it Minimal affinizations of representations of quantum groups: the rank-2 case}, Publ. Res. Inst. Math. Sci. {\bf 31} (1995), 873--911.


\bibitem{cha:fer}
V. Chari, {\em On the fermionic formula and the Kirillov-Reshetikhin conjecture}, Int. Math. Res. Not. 2001 (2001), 629--654.

\bibitem{cfk}
V.~Chari, G.~Fourier, and T.~Khandai, {\em A categorical approach to Weyl modules}, Transf. Groups {\bf 15} (2010), 517--549, DOI: 10.1007/s00031-010-9090-9.

\bibitem{cg:hwcat}
V.~Chari and J.~Greenstein, {\em Current algebras, highest weight categories and quivers}, Adv. in Math. {\bfseries216} (2007),  811--840, DOI: 10.1016/j.aim.2007.06.006.

\bibitem{cg:koszul}
V.~Chari and J.~Greenstein, {\em A family of Koszul algebras arising from finite-dimensional representations of simple Lie algebras}, Adv. in Math. {\bfseries220} (2009), 1193--1221, DOI: 10.1016/j.aim.2008.11.007.

\bibitem{cg:minp}
V. Chari and J. Greenstein, {\em Minimal affinizations as projective objects}, J. Geom. Phys. {\bf 61} (2011),  594--609, DOI: 10.1016/j.geomphys.2010.11.008.

\bibitem{ckr:faces}
V. Chari, A. Khare, and T. Ridenour, {\em Faces of polytopes and Koszul algebras}, J. Pure Appl. Algebra  {\bf 216} (2012), 1611--1625, DOI: 10.1016/j.jpaa.2011.10.014.

\bibitem{chakle}
V. Chari and M. Kleber, {\em Symmetric Functions and Representations of Quantum Affine Algebras}, Contemp. Math. {\bf 297} (2002), 27--45.


\bibitem{cm:kr}
V. Chari and A. Moura, {\em The restricted Kirillov-Reshetikhin modules for the current and twisted current algebras}, Comm. Math. Phys. {\bf 266} (2006),  431--454, DOI: 10.1007/s00220-006-0032-2

\bibitem{cm:krg}
V. Chari and A. Moura, {\it Kirillov--Reshetikhin modules associated to  $G_2$}, Contemp. Math. {\bf 442} (2007) 41--59, DOI 10.1090/conm/442

\bibitem{frke:combkr}
P. Di Francesco and R. Kedem, {\em Proof of the combinatorial Kirillov-Reshetikhin conjecture}, Int. Math. Res. Not. {\bf 2008} (2008), 6--57, DOI: 10.1093/imrn/rnn006.


\bibitem{fer:msc}
G. Ferreira, Representações de álgebras de correntes e álgebras de Koszul, M.Sc. Dissertation, Unicamp (2012).

\bibitem{felo:mweyl}
B. Feigin and S. Loktev, {\em Multidimensional Weyl modules and symmetric functions},  Comm. Math. Phys. {\bf 251} (2004), 427--445, DOI: 10.1007/s00220-004-1166-8.

\bibitem{foli:wdkr}
G. Fourier and P. Littelmann, {\em Weyl modules, Demazure modules, KR-modules, crystals, fusion products and limit constructions}, Adv. in Math. {\bf 211} (2007), 566--593, DOI: 10.1016/j.aim.2006.09.002.

\bibitem{fos:krcne}
G. Fourier, M. Okado, and A. Schilling, {\em Kirillov--Reshetikhin crystals for nonexceptional types}, Adv. Math. {\bf 222} (2009), 1080--1116, DOI:10.1016/j.aim.2009.05.020.

\bibitem{jap:path}
G.~Hatayama, A.~Kuniba, M.~Okado, T.~Takagi, and Z.~Tsuboi, {\em Paths, crystals and fermionic formulae}, Prog. Math. Phys. {\bf 23} (2002), 205--272.

\bibitem{jap:rem}
G. Hatayama, A. Kuniba, M. Okado, T. Takagi, and Y. Yamada, {\em Remarks on fermionic formula}, Contemp. Math. {\bf 248} (1999), 243--291.

\bibitem{her:KR}
D. Hernandez, {\em Kirillov-Reshetikhin conjecture: the general case}, Int. Math. Res. Not. (2010), no. 1, 149--193, DOI: 10.1093/imrn/rnp121.

\bibitem{kharid:polytopes}
A. Khare and T. Ridenour, {\em Faces of weight polytopes and and a generalization of a Theorem of Vinberg}, to appear in Algebras and Representation Theory, arXiv:1005.1114.

\bibitem{kod:ext}
R. Kodera, {\em Extensions between finite-dimensional simple modules over a generalized current Lie algebra}, Transf. Groups {\bf 15} (2010), 371--388, DOI: 10.1007/s00031-010-9088-3.

\bibitem{kona:loewy}
R. Kodera and K. Naoi, {\em Loewy series of Weyl modules and the Poincaré polynomials of quiver varieties}, to appear in Publications of RIMS, arXiv:1103.4207.

\bibitem{lau:ml}
M. Lau, {\em Representations of Multiloop Algebras},  Pacific J. Math. {\bf 245} (2010), 167--184. DOI: 10.2140/pjm.2010.245.167.

\bibitem{mou:reslim}
A. Moura, {\em Restricted limits of minimal affinizations}, Pacific J. Math. {\bf 244} (2010), 359--397, DOI: 10.2140/pjm.2010.244.359.

\bibitem{mp:e6}
A. Moura and F. Pereira, {\em Graded limits of minimal affinizations and beyond: the multiplicity free case for type $E_6$}, Algebra Discrete Math. {\bf 12} (2011), no. 1, 69--115.

\bibitem{nasa:perfectkr}
S.~Naito and D.~Sagaki, {\em Construction of perfect crystals conjecturally corresponding to Kirillov-Reshetikhin modules over twisted quantum affine algebras},
Comm. Math. Phys. {\bf 263} (2006), 749--787, DOI: 10.1007/s00220-005-1515-2.

\bibitem{nanak:trc}
W. Nakai and T. Nakanishi, {\em Paths and tableaux descriptions of Jacobi-Trudi determinant associated with quantum affine algebra of type $C_n$}, SIGMA {\bf 3} (2007), 078, 20 pages, DOI: 10.3842/SIGMA.2007.078

\bibitem{nanak:trd}
W. Nakai and T. Nakanishi, {\em Paths and tableaux descriptions of Jacobi{}-Trudi determinant associated with quantum affine algebra of type $D_n$}, J. Algebr. Combin. {\bf 26} (2007), 253--290, DOI: 10.1007/s10801-007-0057-4.

\bibitem{naoi:fuskr}
K. Naoi, {\em Fusion products of Kirillov-Reshetikhin modules and the X = M conjecture}, Adv. Math. {\bf 231} (2012), 1546--1571, DOI: 10.1016/j.aim.2012.07.003.

\bibitem{ns} E. Neher and A. Savage, {\em Extensions and block decompositions for finite-dimensional representations of equivariant map algebras}, arXiv:1103.4367.

\bibitem{nss} E. Neher, A. Savage, and P. Senesi, {\em Irreducible finite-dimensional representations of equivariant map algebras}, Trans. Amer. Math. Soc. {\bf 364} (2012), no. 5, 2619--2646, DOI: 10.1090/S0002-9947-2011-05420-6.

\bibitem{sch:combkr}
A. Schilling, {\em Combinatorial structure of Kirillov-Reshetikhin crystals of type $D_n^{(1)}, B_n^{(1)}, A_{2n-1}^{(2)}$}, J. Algebra {\bf 319} (2008), 2938--2962, DOI: 10.1016/j.jalgebra.2007.10.020.

\bibitem{scti:demkrc}
A. Schilling and P. Tingley, {\em Demazure crystals, Kirillov-Reshetikhin crystals, and the energy function}, The Electronic Journal of Combinatorics {\bf 19} (2012), no. 2, P4.

\end{thebibliography}
\end{document}